\newtheorem{theorem}{Theorem}[section]
\newtheorem{lemma}[theorem]{Lemma}
\newtheorem{prop}[theorem]{Proposition}
\newtheorem{corollary}[theorem]{Corollary}
\theoremstyle{definition}
\newtheorem{defn}[theorem]{Definition}
\newtheorem{remark}[theorem]{Remark}
\newtheorem{convention}[theorem]{Convention}
\numberwithin{equation}{section}
\def\ggg{\mathfrak{g}}
\def\co{\mathcal{O}}
\def\calh{\mathcal{H}}
\def\calk{\mathcal{K}}
\def\ggg{\mathfrak{g}}
\def\ppp{\mathfrak{p}}
\def\hhh{\mathfrak{h}}
\def\bbb{\mathfrak{b}}
\def\fff{\mathfrak{f}}
\def\bbc{\mathbb{C}}
\def\bbf{\mathbb{F}}
\def\bbz{\mathbb{Z}}
\def\bk{\mathbf{k}}
\def\bbn{\mathbb{N}}
\def\bbk{{\mathbb{K}}}
\def\tbk{\textbf{k}}
\def\bc{{\mathbf{c}}}
\def\bo{{\bar 1}}
\def\bz{{\bar 0}}
\def\ev{{\text{ev}}}
\def\sfF{\textsf{F}}
\def\scrO{{\mathscr{O}}}
\def\scrL{{\mathscr{L}}}
\def\scrI{{\mathscr{I}}}
\def\scrW{\mathscr{W}}
\def\bwedge{{\bigwedge\hskip-2pt{}^{{}^{\bullet}}}}
\def\Lie{\text{Lie}}
\def\GL{\text{GL}}
\def\Hom{\text{Hom}}
\def\hmod{\text{-\bf{mod}}}
\def\id{\mathsf{id}}
\def\ind{\texttt{ind}}
\def\total{ H_{\textsf{total}}^0(\lambda)  }
\def\bmn{{B^{(mn)}}}
\def\sak{{\texttt{salg}_\tbk}}
\begin{document}

\title[Jantzen filtration of Weyl modules]
{Jantzen filtration of Weyl modules for general linear supergroups}
\author{Yi-Yang Li and Bin Shu}
\address{School of Mathematics, Physics and Statistics, Shanghai University of Engineering Science,
Shanghai 201620, China}\email{yiyangli1979@outlook.com}
\address{Department of Mathematical Sciences, East China Normal University,
Shanghai 200241,  China} \email{bshu@math.ecnu.edu.cn}
\subjclass[2010]{20G05; 17B20;17B45; 17B50}
 \keywords{general linear supergroups, super Weyl groups, (totally-odd) induced modules, Weyl modules,
Jantzen filtration, Kac modules}

\thanks{This work is supported by the National Natural Science Foundation of China (Grant No. 12071136, 11771279 and 12271345), and by Science and Technology Commission of Shanghai Municipality (No. 22DZ2229014).}

\begin{abstract} Let $G=\GL(m|n)$ be a general linear supergroup over  an algebraically closed field $\tbk$ of odd characteristic $p$.
In this paper we construct  Jantzen filtration of Weyl modules $V(\lambda)$ for $G$ when $\lambda$ is a typical weight in the sense of Kac's definition, and consequently obtain a sum formula for their characters. By Steinberg's tensor product theorem, it is enough for us to study typical weights with aim  to formulate irreducible characters.
As an application, it turns out that an irreducible $G$-module $L(\lambda)$ can be realized as a Kac module if and only if $\lambda$ is $p$-typical.
\end{abstract}

\maketitle
\setcounter{tocdepth}{1}\tableofcontents
\section*{Introduction}

\subsection{} As is well-known, it is a very important but very difficult problem to formulate irreducible characters for reductive algebraic groups in prime characteristic (see  \cite{AJS}, \cite[\S{II.C}]{Jan3}, \cite{Wil}, etc.). Establishing Jantzten filtration and its sum formula of characters for Weyl modules is an approach to understanding the question (see \cite[\S{II.8}]{Jan3}). As a counterpart, it's a significant task to establish Jantzen filtration in the study of representations of algebraic supergroups.
In this paper, we initiate to study Jantzen filtration for algebraic supergroups, beginning with the case of the general linear supergroup $GL(m|n)$.

\subsection{} Let $\textbf k$ be an given  algebraically closed field of characteristic $p>2$. Let $G$ be an algebraic supergroup of Chevalley type  over  $\textbf{k}$ in the sense of \cite{FG12}, $G_{\ev}$ is the purely-even subgroup of $G$, $T$ be a maximal torus of $G$ with $X(T)$ being the character group, and $B$ the Borel subgroup corresponding to negative roots. Denote by $W$ the Weyl group of $G$,  by  $w_{\bar0}$ the longest elements in $W$ and $l(w_{\bar 0})$ is the length of $w_{\bar 0}$. Let $\ggg=\texttt{Lie}(G)$ and $\bbb^-=\texttt{Lie}(B)$.

 \subsection{} When considering a connected reductive algebraic group $G_\ev$ (here we abuse some notations like  $G_{\ev}$ with subscript $\ev$ for the time being to void any confusions), one has ``standard modules" ($H_\ev^0(\lambda):=H^0(G_\ev/B_\ev, \scrL_{G_\ev/B_\ev}(\textbf k_\lambda))$) and ``costandard modules" ($V(\lambda):=H^0_\ev(-w_{\bar0}\lambda)^{*}$) with $\lambda\in X^+(T)$ (the set of dominant weights). The  Weyl module $V_{\ev}(\lambda)$ is isomorphic to $H_{ev}^{l(w_{\bar0})}(w_{\bar0}.\lambda)$ under the Serre duality (cf. \cite[\S{II.4.2}]{Jan3}). The character formulas for $V_\ev(\lambda)$ and  $H_\ev^0(\lambda)$ are the same.
   The theory of Jantzen filtrations and the sum formulas for  Weyl modules plays a very important role in representations of $G_\ev$ (cf. \cite[\S{II.8}]{Jan3}).

\subsection{}\label{sec: totally early} The standard modules $H^0(\lambda)$ for algebraic supergroup $G$  have been  studied in different versions  (see \cite{BK03, B06, MZ1, Shi,  Z2, Z1}).
%
%
In this paper, we will focus on   $G=GL(m|n)$  unless other stated.
Define the  Weyl supermodule $V(\lambda):=H^{l(w_{\bar0})}(w_{\bar0}.\lambda)$, the Serre duality for  superschemes (see Theorem \ref{Serre duality}) makes $V(\lambda)$ isomorphic to $H^{0}(G/B, \textbf k_{-w_{\bar 0}\lambda}\otimes \bwedge(\ggg\slash \bbb^-)_\bo)^*$ (see (\ref{f8.1})).
Although not as satisfactory with $V(\lambda)$ as in the case of reductive algebraic groups,  it turns out that we have the following axioms  (see Lemma \ref{lem9.1}).
\begin{itemize}
\item[(W1)] The socle of $H^{0}(G/B, \textbf k_{-w_{\bar 0}\lambda}\otimes \bwedge(\ggg\slash \bbb^-)_\bo)$ is isomorphic to $L(-w_{\bar 0}\lambda+2\rho_{\bar 1})$) where $\rho_{\bar 1}$ is half the sum of positive odd roots.
    \item [(W2)] Correspondingly, the head of $V(\lambda)$ is isomorphic to $L(-w_{\bar 0}\lambda+2\rho_{\bar 1})^*$.
        \item[(W3)] The following character formula  holds $$\texttt{ch}(V(\lambda))=\texttt{ch}(H^0(\lambda)).$$
            \end{itemize}
So, the Weyl modules are still important as in the ordinary case although it becomes more complicated.

Indeed,  different from the case of reductive algebraic groups, the head of $V(\lambda)$ is  no more isomorphic to the socle of $H^0(\lambda)$ in general. There is no longer enough information from $H^0(\lambda)$ providing for the construction of Jantzen filtration because only considering the Weyl group $W$ it is not enough for us to take the whole picture, due to the emergence of odd parts.

 So, in order to establish  the Jantzen filtration of $V(\lambda)$, we need to consider the influence caused by the odd reflections. Especially, we need to introduce a so-called totally-odd induced module $H^{0}(G/w_\bo(B), \scrL_{G/w_\bo(B)}(\textbf k_{\lambda-2\rho_\bo}))$ with respect to the Borel subgroup $w_\bo(B)$ defined by total negative odd roots where $\rho_\bo$ stands for half the sum of positive odd roots, and $w_\bo$ is the ``longest" element of the part  generated odd reflections of the super Weyl group $\widehat W$ introduced in \cite{PS} for construction of Jantzen filtration in modular representations of basic classical Lie superalgebras. We will denote this totally-odd induced module by $H_{\textsf{total}}^0(\lambda)$.  In our construction, $H_{\textsf{total}}^0(\lambda)$ will take the place of $H^0(\lambda)$ in the situation of reductive algebraic groups, which turns out to satisfy the following axiom (see Lemma \ref{lem: totally ind ch}).
 \begin{itemize}
 \item[(W4)] $\texttt{ch}(\total)=\texttt{ch}(H^0_\ev(\lambda-2\rho_\bo))\Xi_{mn}$ with $\Xi_{mn}=\prod_{\beta\in \Phi^+_\bo}(1+e^\beta)$.  
     \end{itemize}

\subsection{} Now we make a  rough explanation on  the above.
In contrast with reductive algebraic groups,
  the critical difference in the super case is that there exist non-conjugate Borel subgroups even under the circumstance that they contain the same maximal torus. Correspondingly, the role of Weyl groups of purely-even subgroups is not enough powerful in describing Weyl modules. We need to add the role of odd reflections integrated into the so-called super Weyl groups. Then, in some sense, for example, under the action of super Weyl groups (see  \S\ref{2.2.2} or \cite[\S3.1]{PS} for more details),  all Borel subgroups  containing the same maximal torus are conjugate.
   So it enables us to have the following pictures involving $V(\lambda)$. Take $T$ to be the standard maximal torus of $\GL(m|n)$ and denote by $\Phi$ the root system associated with $T$.
    One can make an order of the positive odd roots with respect to {the  Borel subgroup $B^+$ with $B^+(R)$ consisting of all  upper-triangle matrices in $\GL(m|n)(R)$ for $R\in\sak$},
    as $\{\beta_1,\beta_2,\ldots,\beta_{mn}\}$ (note that all odd roots  for $\GL(m|n)$ are isotropic). Denote by $\hat r_{\beta_i}$ the corresponding odd reflections (see \S\ref{2.2.2}). Then a sequence of Borel subgroups $\{B^{(i)}\mid i=1,\ldots,mn\}$ arise.
  Naturally,  with respect to  $w_\bo(B)=B^{(mn)}$ we already have talked about the totally-odd induced module $H_{\textsf{total}}^0(\lambda)=H^{0}(G/w_\bo(B), \scrL_{G/ w_\bo(B)}(\textbf k_{\lambda-2\rho_\bo}))$. The socle of $H_{\textsf{total}}^0(\lambda)$ turns out isomorphic to  $L(-w_{\bar 0}\lambda+2\rho_{\bar 1})^*$  which coincides with the head of $V(\lambda)$, up to isomorphisms (see \S\ref{S8.2}).

Next, it is revealed that there exists connection $V(\lambda)$ with $H_{\textsf{total}}^0(\lambda)$.
 We construct a series of homomorphisms arising from an ordered  odd reflections along with the ordinary longest element ${w}_\bz$ in $W$ (see Lemma \ref{lem: longest Weyl ele}).
 Composing the odd parts, we first establish  the nonzero  homomorphism
 $$\widetilde T_{A,w_{\bar1}}: H^0_A(\lambda)\rightarrow H^{0}(G_A/w_\bo(B)_A, \scrL_{G/w_\bo(B)}(A_{\lambda-2\rho_\bo}))$$
  for any commutative $\bbz$-algebra $A$ (see \S \ref{7.5}).
  For the other part arising from $w_\bz$, one can  mimic the arguments for reductive algebraic groups.
In summary, the construction is   based on the reduced expression of the longest element $\widehat{w}_\ell=w_{\bar 0}w_{\bar 1}$ of the super Weyl group $\widehat W$.
 So our arguments on the chain of homomorphisms mentioned above are divided into two parts. One part is from real reflections, which is actually the same as in \cite[\S{II.8.16}]{Jan3},  giving rise to a homomorphism  $\widetilde T_{A,w_{\bar0}}: V_A(\lambda)\rightarrow H^0_A(\lambda)$.  The other part are completely the new phenomenon, from odd reflections, which reflects  the cohomological information arising from the  transition of Borels  by odd refections (see \S\ref{sec: 8.3}).
 The finial composite
  \begin{align*}\widetilde T_{A,\widehat w_{\ell}}=&\widetilde T_{A, {w}_\bo}\circ \widetilde T_{A, w_{\bar 0}}:\cr
  & H^{l(w_{\bar 0})}
 (G_A/B_A, \scrL_{G_A/ B_A}(A_{w_{\bar 0}.\lambda}))\rightarrow  H^0(G_A/w_\bo(B)_A, \scrL_{G_A/ w_\bo(B)}(A_{\lambda-2\rho_\bo}))
 \end{align*}
  turns out to be nonzero when $\lambda$ is typical\footnote{In the present paper, the notion ``typical weight" is identical to Kac's definition over complex numbers in \cite{Kac} (see Definition \ref{def: typical}), different from the one in \cite{ZS} and \cite{Z1}, the latter of which is identical another notion ``$p$-typical weight" here (see Remark \ref{rem: p-typical}).}. In particular, when taking $A$ to be a principal ideal domain such that the fractional field $\bbk$ has characteristic $0$, then $\widetilde T_{\bbk,\widehat w_{\ell}}$ is an isomorphism for typical $\lambda$. By localization of $A$ at a special prime ideal, one has that $\widetilde T_{\textbf k,\widehat w_{\ell}}$  maps the head of $V(\lambda)$ to the socle of $\total$ (see Proposition \ref{c9.2}).

\subsection{} Based on the above results,  we successfully establish  the Jantzen filtration of the Weyl module $V(\lambda)$ and the sum formula for characters when $\lambda$ is typical (see Theorem \ref{main thm}). As to atypical weights, Steinberg's tensor theorem enables us to reduce the question of formulating irreducible characters to the case of  typical weights (see Theorem \ref{thm: st ten prod}).  As an application,  we investigate the question of realizing irreducible modules via  Kac modules, the latter of which are induced modules from irreducible $G_\ev$-module $L_\ev(\lambda)$, regarded $B^+G_\ev$-modules (see \S\ref{sec: kac mod}). It is deduced that an irreducible module $L(\lambda)$ is isomorphic to a Kac module if and only if $\lambda$ is a $p$-typical, i.e. $(\lambda+\rho,\beta)\not\equiv0\mod p$ for all $\beta\in \Phi^+_\bo$ (see Theorem \ref{thm: p-typical}). This is a modular version of typical irreducible modules over complex numbers (see Remarks \ref{rem: complex typical} and \ref{rem: p typical realization}).

\subsection{}The paper is organized as follows. In Section 1, we list the notations and some preliminary results. In Section 2, we present  the induction and restriction functors on the quotients of supergroup schemes and associated sheaves  in the spirit of \cite[\S2]{B06}, in particular, some material is devoted to the super version of Serre duality. In Sections 3 and 4,  on the basis of computation on induced modules of $\GL(1|1)$, we study  homomorphisms between two induced modules  arising from two Borel subgroups which are adjacent by  odd reflections. Here  the arguments on $\GL(1|1)$ in \cite{MZ, Z1} are much used.  In Section 5, we introduce typical weights and Steinberg's tensor product theorem, showing that it is enough for us to understand the case of typical weights (see Theorem \ref{thm: st ten prod}).
In Section 6, we show  a nonzero homomorphism from the induced module $H^0(G\slash B, \lambda)$ to the totally-odd induced module $H^0(G\slash w_\bo(B), \lambda-2\rho_\bo)$. In Section 7, with the help of Serre duality we analysis the  Weyl module $V(\lambda)$.
 In Section 8,
 we establish  the Jantzen filtration of $V(\lambda)$ and a sum formula (Theorem \ref{main thm}). In the concluding section, we introduce Kac modules, and obtain a result on Kac module realization of $p$-typical irreducible modules (Theorem \ref{thm: p-typical}).
\section{Preliminaries}
\subsection{Basic notations and conventions}\label{sec: conv}
Throughout the paper (particularly from Section \ref{sec: third} on), the notions of vector spaces (resp. modules, subgroups) means vector superspaces (resp. super-modules, super-subgroups). For simplicity, we often omit the adjunct word ``super."  Preliminarily, by a commutative superalgebra $R=R_\bz\oplus R_\bo$ it means that  $ab=(-1)^{|a||b|}ba$ for any $\bbz_2$-homogeneous elements $a\in R_{|a|}, b\in R_{|b|}$ where  $|a|,|b|\in \bbz_2$. We will denote by $\sak$ the category of commutative $\tbk$-superalgebras.  Furthermore, we keep the following notations and assumptions unless other stated.

\begin{itemize}

\item[(1)] Recall that the general linear supergroup $\GL(m|n)$ can be defined as an (affine) algebraic supergroup scheme over $\bbz$ (for example, see \cite[Chapter 5]{FG12} or \cite[\S3]{Shi}). In this sense, we write  $G_\bbz=\GL(m|n)_\bbz$. Set  $G_A=G(\bbz)_A$ for any (commutative) $\bbz$-algebra $A$ and $G=G_\tbk$ in the same spirit as in \cite[\S{I}.1.1]{Jan3}. This is to say, $G_A$ is a representable functor from the category of commutative $A$-superalgebras to the category of groups.
    Furthermore, its closed subgroups corresponding to the above subgroups appearing above are well defined as split algebraic $\bbz$-supergroups (see \cite[Chapter 4]{FG12}).
        Similarly, we have $T_A$, $B_A$, and $(G_\ev)_A$.
    In particular, $G=(G_\bbz)_\bk$. Denote by $\texttt{Dist}(G)$ the distribution algebra of $G$ (see  \cite{BKu} or \cite{Jan3}).



\item[(2)] {$T$: the standard maximal torus of $G=\GL(m|n)(=\GL(V)$ for $V=V_\bz\oplus V_\bo$ of superdimeision $(m|n)$) which is actually the usual maximal torus of $\GL(m)\times \GL(n)$ consisting of diagonal matrices associated with a fixed basis $v_1,\ldots,v_m$ of the even part $V_\bz$; $v_{m+1},\ldots,v_{m+n}$ of the odd part $V_\bo$.

\item[(3)]      The character group  $X(T)=\Hom(T,\Bbb{G}_{\text{m}})$ can be  identified with the free abelian group on generators $\sigma_i$, $i=1,\cdots, m+n$ where $\sigma_i$ picks out the $i$th entry of a diagonal matrix.  Corresponding to the parity of $v_i$, we have the parity $|\sigma_i|$ of $\sigma_i$ which is defined to be $1$ if $i\in\{1,\ldots,m\}$ and $-1$ if $i\in\{m+1,\ldots,m+n\}$. Set a symmetric bilinear form on $X(T)$ by appointing that
\begin{align}\label{eq: bilinearform first}
(\sigma_i, \sigma_j) = (-1)^{|\sigma_i|}\delta_{i,j}
\end{align}
where $\delta_{i,j}$ stands for the Kronecker function.
      }


\item[(4)] $B=B^-$: the standard Borel subgroup with $B(R)$ consisting of {lower} triangular matrices of $\GL(m|n)(R)$ for $R\in\sak$; $B^+$: Borel subgroup opposite to $B$.

\item[(5)] $\Phi$, $\Phi^\pm$, $\Phi_\bz$, $\Phi_\bo$: the root system of $G$, the positive/negative root system of $G$ associated with the standard Borel subgroup, the even root sets of $\Phi$, the odd root sets of $\Phi$ respectively.

\item[(6)] $\Pi$, $\Pi_\bz$, $\Pi_\bo$: the  fundamental simple root system, the even simple roots system, the  odd simple roots system.

\item[(7)] $\rho_\bz,\rho_\bo$: half the sum of positive even roots, half the sum of positive odd roots, respectively; $\rho:=\rho_\bz-\rho_\bo$.


\item[(8)]  $r_\alpha$, $W$: even reflection with
$r_{\alpha}(\mu)=\mu-2(\mu,\alpha)(\alpha,\alpha)^{-1}\alpha, \alpha\in \Pi_\bz$, the Weyl group generated by $\{r_\alpha\mid\alpha \in
\Phi_\bz\}$.

\item[(9)] $ X^{+}(T):=\{\lambda\in X(T)\mid (\lambda,\alpha)\geq 0 \text{ for all } \alpha\in \Phi^+_\bz\}$ the set of dominant weight in $X(T)$.

    \item[(10)] For simplicity of notations, set $H^i(G\slash B',\lambda):=H^i(G/B', \scrL_{G\slash B'}(\textbf{k}_\lambda))$ (see Convention \ref{con: add conv}). Associated with a commutative $\bbz$-algebra $A$,  the sheaf cohomology $H^i(G_A/B'_A, \scrL_{G_A\slash B'_A}(A_\lambda))$
 will be  written as $H^i(G_A\slash B'_A,\lambda)$ or $H^i_A(G\slash B',\lambda)$.

 Some other conventions for induced modules over purely-even groups are also introduced in
 \S\ref{S4.2}.

 \item[(11)] For a finite-dimensional $T$-module $M$, $M$ is decomposed into a direct sum of weight spaces $M=\sum_{\lambda\in X(T)}M_\lambda$.

     \item[(12)] For a superscheme $X$ over $\tbk$, we denote by $\tbk[X]$ the global section of the structural sheaf of $X$.

         \item[(13)] {{(Frobenius kernels) Fix a positive integer $r$ we define the Frobenius morphism $\sfF^r(R):G(R)\rightarrow G_\ev(R)$ raising each matrix entry to the $p^r$th power for $R\in\sak$. Note that for any $a\in R_\bo$, $a^{p^r}=0$. So the morphism makes sense. Let $G_r$ denote the kernel of $\sfF^r$ (the $r$th Frobenius kernel). Then $G_r$ becomes a normal subgroup scheme of $G$.}}
\end{itemize}

\subsection{}\label{2.2s}
 Keep $G= \GL(m|n)$ in the remainder of this section.  For $\lambda\in X(T)$, we have $\lambda=\sum_{1\leq i\leq m+n}\lambda_i\sigma_i$, where $~\sigma_i(t)=t_i, t=\begin{pmatrix}  t_1 &  0& \ldots & 0 \\ 0 & t_1 & \ldots & 0\\ \vdots & \vdots & \ldots & 0\\  0 & 0&\ldots&t_n \end{pmatrix} \in T(R)$.
 Then $\Phi=\{\sigma_i-\sigma_{j}\mid 1\leq i\neq j\leq m+n\}$ is the root system for $G$.

 Call $B^+$ the standard Borel subgroup. Associated with $B^+$ the (standard) positive root system   $\Phi^+=\{\sigma_i-\sigma_{j}\mid 1\leq i<j\leq m+n\}$ and the (standard) simple root system $\Pi=\{\sigma_i-\sigma_{i+1}\mid 1\leq i \leq m+n-1\}$.

In order to distinguish  even and odd roots, we change the notations by setting $\delta_i=\sigma_i, 1\leq i\leq m$ and $\epsilon_j=\sigma_{j+m}, 1\leq j\leq n$. Then $\Pi_\bo=\{\delta_m-\epsilon_1\}$ and $\Pi_\bz=\{\delta_i-\delta_{i+1},\epsilon_j-\epsilon_{j+1}\mid 1\leq i\leq m-1; 1\leq j\leq n-1\}$.
Then the bilinear form on $X(T)\otimes_{\mathbb Z}\mathbb Q$ coming from (\ref{eq: bilinearform first})
becomes $$(\delta_i, \delta_j)=\begin{cases}
      1 & \hbox{ if } 1\leq i=j\leq m,\\
      0  & \hbox{ if } i\neq j;
    \end{cases}\hbox{  }(\epsilon_i, \epsilon_{j})=\begin{cases}
      -1 & \hbox{ if } 1\leq i=j\leq n,\\
      0  & \hbox{ if } i\neq j.
    \end{cases}$$

\subsubsection{Odd reflections}\label{2.2.1} In the standard positive root system mentioned above, there are $mn$ positive odd roots: $\delta_s-\epsilon_t$ with $s$ ranging through $\{1,\ldots,m\}$ and $t$ ranging through $\{1,\ldots,n\}$.
We make them in an order by defining $\delta_s-\epsilon_t\prec \delta_k-\epsilon_l$ if and only if either $s>k$, or $s=k$ but $t<l$.
This order is identical to  the one defined in \cite[\S4]{BKu}.
By this order, the set of positive odd roots  can be parameterized by the positive integers  from $1$ to $mn$ as below,
 $\beta_1:=\delta_m-\epsilon_1, \beta_2:=\delta_{m}-\epsilon_2,\ldots,\beta_n:=\delta_{m}-\epsilon_n, \beta_{n+1}:=\delta_{m-1}-\epsilon_1,\beta_{n+2}:=\delta_{m-1}-\epsilon_2,
 \ldots,\beta_{2n}:=\delta_{m-1}-\epsilon_n,
 \beta_{2n+1}:=\delta_{m-2}-\epsilon_1,
\beta_{2n+2}:=\delta_{m-2}-\epsilon_2,\ldots,
\beta_{3n}:=\delta_{m-2}-\epsilon_n,\ldots,
\beta_{(m-1)n+1}:=\delta_{1}-\epsilon_1 ,\ldots,
\beta_{mn}:=\delta_{1}-\epsilon_n $.
 This is to say,  $\Phi_\bo=\{\beta_1,\beta_2,\ldots,\beta_{mn}\}$
 (there will be a concise presentation of $\beta_i$ in (\ref{eq: concept exp})). In contrast with the ordinary algebraic groups, there are different Borel subgroups which are not mutually conjugate. Recall that a Borel subgroup is dependent on its positive root system, equivalently, its simple root system.
Note that for any simple root system $\Pi'$ of $G$, there exists at least one odd root in $\Pi'$. Fix an odd root $\beta\in \Pi'$ which is certainly isotropic in the case of $G=\GL(m|n)$. Then, there is a new simple root system $\Pi''$ as below:
$$\Pi''=\{\alpha\in \Pi'\backslash \{\beta\}\mid (\alpha,\beta)=0\}\cup
 \{\alpha+\beta\mid \alpha\in \Pi', (\alpha, \beta)\ne 0\}\cup\{-\beta\}.$$
This operation changing $\Pi'$ into $\Pi''$ is just the odd refection associated with $\beta$, denoted by $\hat r_\beta$ (see \cite{LLS} or \cite[\S1.3]{CW}) .

If a simple root system arises from some other one via an odd reflection, then the corresponding Borel subgroups are not conjugate.

\subsubsection{Adjacent simple root systems via an ordered chain of odd reflections}\label{2.2.2}
For $\GL(1|1)$, the root system consists of  two odd roots $\pm(\delta_1-\epsilon_1)$. It is simple and clear. Let us consider the situation when $\GL(m|n)$ is not of type $(1|1)$. Recall that $\Pi$ contains  $\beta_1=\delta_m-\epsilon_{1}$.  Set $\Phi^+_{\beta_1}:=\{-\beta_1\}\cup \Phi^+\setminus\{\beta_1\}$.
Then $\Phi^+_{\beta_1}$
is a new positive root system (see for example \cite[Lemma 1.30]{CW}),
whose  fundamental root system is by computation
 $$\Pi_{\beta_1}=\{\delta_1-\delta_{2},\ldots,\delta_{m-2}-\delta_{m-1},\delta_{m-1}-\epsilon_1, \epsilon_1-\delta_m,\delta_m-\epsilon_2,\epsilon_{2}-\epsilon_{3},\ldots,\epsilon_{n-1}-\epsilon_{n}\}$$
if $n\geq 2$, which contains $\beta_2=\delta_m-\epsilon_2$,
or
$$\Pi_{\beta_1}=\{\delta_1-\delta_{2},\ldots,\delta_{m-2}-\delta_{m-1},
\delta_{m-1}-\epsilon_1, \epsilon_1-\delta_m\}$$
if $n=1$ (consequently $m\geq 2$), which contains $\beta_2=\delta_{m-1}-\epsilon_1$. Repeatedly by operation $\hat r_{\beta_2}$, the simple root system $\Pi_{\beta_1}$ is changed into
$$\Pi_{\beta_2}=\{\delta_1-\delta_{2},\ldots,\delta_{m-2}-\delta_{m-1},
\delta_{m-1}-\epsilon_1, \epsilon_1-\epsilon_2,\epsilon_2-\delta_m,\delta_m-\epsilon_{3},
\epsilon_{3}-\epsilon_{4},\ldots,\epsilon_{n-1}-\epsilon_{n}\}$$
 for the case $n\geq 2$, which contains $\beta_3=\delta_m-\epsilon_3$ provided that $n\geq 3$,
or
$$\Pi_{\beta_2}=\{\delta_1-\delta_{2},\ldots,\delta_{m-3}-\delta_{m-2},
\delta_{m-2}-\epsilon_1, \epsilon_1-\delta_{m-1},\delta_{m-1}-\delta_m\}$$
 for the case $n=1$ while $m\geq 2$, which contains $\beta_3=\delta_{m-2}-\epsilon_1$ provided $m\geq 3$.

The above process can be repeated. We have the following basic observation.

\begin{lemma}\label{lem: adjacent borels}
 There are $mn$ simple root systems $\Pi_{\beta_i}$, $i=1,\ldots,mn$ satisfying that for each $i$,  $\Pi_{\beta_{i+1}}$ is arising from $\Pi_{\beta_{i}}$ by odd reflection $\hat r_{\beta_{i+1}}$.
\end{lemma}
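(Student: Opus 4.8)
The statement is essentially a bookkeeping claim about iterating the odd-reflection operation $\hat r_{\beta_{i+1}}$ along the fixed linear order $\beta_1 \prec \beta_2 \prec \cdots \prec \beta_{mn}$ on the positive odd roots, so my plan is to prove it by induction on $i$, tracking simultaneously two pieces of data: the fundamental system $\Pi_{\beta_i}$ and the positive root system $\Phi^+_{\beta_i} = \{\text{roots made positive so far}\}$. First I would set up the inductive hypothesis precisely: after $i$ steps, $\Phi^+_{\beta_i}$ is the positive system obtained from $\Phi^+$ by flipping the signs of exactly $\beta_1,\dots,\beta_i$, i.e. $\Phi^+_{\beta_i} = \big(\Phi^+ \setminus \{\beta_1,\dots,\beta_i\}\big) \cup \{-\beta_1,\dots,-\beta_i\}$, and that $\beta_{i+1}$ is a simple root in $\Pi_{\beta_i}$. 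Granting the hypothesis, the inductive step is to apply $\hat r_{\beta_{i+1}}$ to $\Pi_{\beta_i}$ using the explicit formula for the odd-reflection operation recalled in \S\ref{2.2.1}, and to verify both that the result is the claimed $\Pi_{\beta_{i+1}}$ and that $\beta_{i+2}$ lies in it (except at the last step $i+1 = mn$, where nothing more is needed).

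The key computational input is purely combinatorial and relies on the specific shape of the chosen order. In the order $\prec$, consecutive odd roots $\beta_{i+1}, \beta_{i+2}$ are of the form $\delta_s - \epsilon_t,\ \delta_s - \epsilon_{t+1}$ when $t < n$, and $\delta_s - \epsilon_n,\ \delta_{s-1} - \epsilon_1$ when $t = n$; in either case $\beta_{i+1}$ sits in $\Pi_{\beta_i}$ between the two "neighbour" simple roots that are non-orthogonal to it, and the odd reflection formula
$$\Pi'' = \{\alpha \in \Pi' \setminus \{\beta\} \mid (\alpha,\beta) = 0\} \cup \{\alpha + \beta \mid \alpha \in \Pi',\ (\alpha,\beta)\neq 0\} \cup \{-\beta\}$$
does exactly the local surgery illustrated in the worked $\beta_1 \to \beta_2 \to \cdots$ examples in \S\ref{2.2.2}: it replaces the segment $\cdots,\ \gamma,\ \beta_{i+1},\ \gamma',\ \cdots$ by $\cdots,\ \gamma + \beta_{i+1},\ -\beta_{i+1},\ \beta_{i+1} + \gamma',\ \cdots$, leaving all other simple roots (which are orthogonal to $\beta_{i+1}$ since $\beta_{i+1}$ is isotropic and all roots are $\pm\delta_a \pm \epsilon_b$ or differences of like letters) untouched. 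I would carry out this local computation once in each of the two cases ($t < n$ and $t = n$), check that $\gamma + \beta_{i+1}$ and $\beta_{i+1} + \gamma'$ are again roots of $\GL(m|n)$ (they are: sums like $(\delta_{s-1}-\delta_s) + (\delta_s - \epsilon_t) = \delta_{s-1}-\epsilon_t$, etc.), and read off that the new simple system contains the next root $\beta_{i+2}$ in the list. A small amount of care is needed at the "wrap-around" points where $t$ resets from $n$ to $1$ and $s$ decreases, and at the boundary cases $n = 1$ (handled separately, matching the $n=1$ displays already given) and $m = 1$; I would treat $\GL(1|1)$ as the trivial base case noted in the text and otherwise run the induction for general $(m|n)$.

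I would also record the companion fact about $\Phi^+_{\beta_i}$, since it is what guarantees the process genuinely terminates after exactly $mn$ steps and does not cycle: each $\hat r_{\beta_{i+1}}$ changes the positive system by flipping precisely one odd root, namely $\beta_{i+1}$ (this is the content of \cite[Lemma 1.30]{CW}, quoted for $\beta_1$ in the text and applied verbatim at each stage), so after $mn$ steps all positive odd roots have been flipped and we arrive at the positive system of $w_\bo(B)$ with simple system $\Pi_{\beta_{mn}}$ — which is the combinatorial fact underlying the later identification $w_\bo(B) = B^{(mn)}$. The only genuine obstacle, and it is mild, is organizing the index arithmetic in the order $\prec$ cleanly enough that the single inductive step covers all positions uniformly rather than degenerating into many subcases; I expect that introducing the compact presentation of $\beta_i$ promised in (\ref{eq: concept exp}) and writing $\beta_{i+1} = \delta_s - \epsilon_t$ with an explicit rule for $(s,t)$ in terms of $i$ makes the neighbour structure in $\Pi_{\beta_i}$ transparent and reduces everything to the two cases above.
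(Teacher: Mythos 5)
Your proposal is correct and follows essentially the same inductive route as the paper's proof, which likewise verifies by explicit computation that each $\Pi_{\beta_i}$ contains $\beta_{i+1}$ and $-\beta_i$; the paper differs only in presentation, writing out a closed-form expression for $\Pi_{\beta_i}$ split into the cases $i=kn$ and $i=kn+l$ (with sub-cases $l+1<n$ and $l+1=n$) rather than phrasing the step as a local surgery on the type-$A$ chain. One small point worth making explicit: at the wrap-around steps ($t=n$), the next root $\beta_{i+2}=\delta_{s-1}-\epsilon_1$ is \emph{not} among the two roots $\gamma+\beta_{i+1}$, $\gamma'+\beta_{i+1}$ produced by the surgery (those are $\epsilon_{n-1}-\epsilon_n$ and $\delta_s-\delta_{s+1}$); it was already a simple root of $\Pi_{\beta_i}$ and simply survives because it is orthogonal to $\beta_{i+1}$, so the induction must in fact carry the full fundamental system $\Pi_{\beta_i}$ (or at least the fact that $\delta_{s-1}-\epsilon_1\in\Pi_{\beta_i}$), exactly as you anticipate when you say you would track $\Pi_{\beta_i}$ itself as a piece of inductive data.
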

\begin{proof} For the type $(1|1)$, there is nothing to say. Suppose $G=\GL(m|n)$ is not of type $(1|1)$. If $n=1$ (consequently, $m\geq 2$), then we can list  $\beta_i=\delta_{i'}-\epsilon_1$ for $i=1,\ldots,m$ and $i':=m-i+1$. And $\Pi_{\beta_i}=\{\delta_1-\delta_2,\ldots,\delta_{i'-2}-\delta_{i'-1},-\beta_i,\beta_{i+1},
 \delta_{i'}-\delta_{i'+1},\ldots, \delta_{m-1}-\delta_m\}$ for $i=1,\ldots,m-1$, and $\Pi_{\beta_m}=\{-\beta_m, \delta_1-\delta_2,\ldots, \delta_{m-1}-\delta_m\}$. In this case,  the statement obviously holds.

 In the following, suppose $n\geq 2$. We only need to show that $\Pi_{\beta_{i}}$ contains $\beta_{i+1}$ and $-\beta_{i}$ (when $i=mn$, $\beta_{i+1}$ is redundant).
Note that for $i\in\{1,\ldots,mn\}$, we can write $i=kn$ with $1\leq k\leq m$, or $i=kn+l$ with $0\leq k<m$ while $1\leq l<n$.  Set $k':=m-(k-1)$ for $1\leq k<m$. Then we can rewrite $\beta_i$ as below
\begin{align}\label{eq: concept exp}
\beta_i=\begin{cases} \delta_{k'}-\epsilon_n, \text{ if } i=kn;\cr
\delta_{k'-1}-\epsilon_l, \text{ if } i=kn+l.
\end{cases}
\end{align}

   We will inductively present $\Pi_{\beta_i}$. Hence the lemma follows from such a precise presentation.
 When $i=1$ and $i=2$, it is clear by the arguments in the paragraph before the lemma. For the general case, we proceed by dividing into cases of variant forms of $i$.

When $i=kn$ with $1\leq k\leq m$, then
\begin{align*}
\Pi_{\beta_i}=\{\delta_1-\delta_2,\ldots, \delta_{k'-2}-\delta_{k'-1}, \cr
&\delta_{k'-1}-\epsilon_1(=\beta_{i+1}), \cr \epsilon_1-\epsilon_2,\ldots,\epsilon_{n-1}-\epsilon_n; \cr &\epsilon_n-\delta_{k'}(=-\beta_i),\cr \delta_{k'}-\delta_{k'+1},\ldots,\delta_{m-1}-\delta_m
\}& \text{ if }k<m,
\end{align*}
and
\begin{align*}
\Pi_{\beta_{mn}}=\{\epsilon_1-\epsilon_2,\ldots,
\epsilon_{n-1}-\epsilon_n,\cr
\epsilon_n-\delta_1 (=-\beta_{mn}), \cr \delta_1-\delta_2,\ldots,\delta_{m-1}-\delta_m\}.
\end{align*}

Now we proceed to work with $i=kn+l$ with $0\leq k<m$ and $1\leq l<n$. We need some additional convention. When $k=0$ we additionally appoint $k'-2=(k+1)'-1=m-1$, $k'-1=(k+1)'=m$ (note that $k'$ for $k=0$ does not make sense. Naturally, any term indicated by $k'$ is  redundant).
Then
\begin{align*}
\Pi_{\beta_i}=\{\delta_1-\delta_2,\ldots,\delta_{k'-2}-\epsilon_1, \epsilon_1-\epsilon_2,\ldots,\epsilon_{l-1}-\epsilon_l, \cr &\epsilon_l-\delta_{k'-1} (=-\beta_{i}),\cr
 &\delta_{k'-1}-\epsilon_{l+1} (=\beta_{i+1}),\cr
\epsilon_{l+1}-\epsilon_{l+2},\ldots,\epsilon_{n-1}-\epsilon_n,
\epsilon_n-\delta_{k'},\delta_{k'}-\delta_{k'+1},\ldots,&\delta_{m-1}-\delta_m\}
\text{ if }l+1<n,
\end{align*}
and
\begin{align*}
\Pi_{\beta_i}=\{\delta_1-\delta_2,\ldots,\delta_{k'-2}-\epsilon_1, \epsilon_1-\epsilon_2,\ldots,\epsilon_{l-1}-\epsilon_l\cr &\epsilon_l-\delta_{k'-1} (=-\beta_{i}),\cr
& \delta_{k'-1}-\epsilon_{n} (=\beta_{i+1}),\cr
\epsilon_n-\delta_{k'},\delta_{k'}-\delta_{k'+1},\ldots,&\delta_{m-1}-\delta_m\}
\text{ if }l+1=n.
\end{align*}

By the above arguments, it has been verified that $\Pi_{\beta_{i}}$ contains $-\beta_i$ and $\beta_{i+1}$. The proof is completed.
\end{proof}

The positive root system corresponding to $\Pi_{\beta_i}$ will be denoted by $\Phi^+_{\beta_i}$ which contains the subset $(\Phi^+_{\beta_i})_\bo$ of odd roots. The corresponding Borel subgroup is denoted by ${B^{(i)}}^+$. The opposite one will be denoted by $B^{(i)}$ ($={B^{(i)}}^-$).

\subsubsection{Super Weyl groups and their longest elements}\label{2.2.3} {The super Weyl group $\widehat W$  is introduced in \cite[\S3.2]{PS}, which is by definition a subgroup of the transform group of the set of Borel subgroups containing $T$  generated by all odd reflections defined previously along with real (ordinary) reflections (called even reflections later).}  Then  $\widehat W$ contains the ordinary Weyl group $W$ as a subgroup.

Set $w_{\bar 1}=\hat r_{mn}\hat r_{mn-1}\cdot\cdot\cdot\hat r_1$  where $\hat r_{i}=\hat r_{\beta_i}$.  Denote by $w_{\bar 0}$ the longest element of $W$. By  Theorem \cite[Theorem 3.10]{PS}, the following result is readily deduced.
%

 \begin{lemma}\label{lem: longest Weyl ele} The longest element $\widehat w_\ell$ of $\widehat W$ can be written as $w_{\bar 0}w_{\bar 1}$, which changes the standard Borel group $B^+$ associated with $\Phi^+$ into $B^-$ associated with $-\Phi^+$.
\end{lemma}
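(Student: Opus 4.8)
The plan is to verify Lemma \ref{lem: longest Weyl ele} by unwinding the definition of $\widehat W$ as a transformation group on the set of Borel subgroups containing $T$, and checking that the composite $w_{\bar0}w_{\bar1}$ is precisely the (unique) element sending $B^+$ to $B^-$, which is the longest element in the natural length function on $\widehat W$ coming from \cite[Theorem 3.10]{PS}. First I would recall that $w_{\bar1}=\hat r_{mn}\hat r_{mn-1}\cdots\hat r_1$ is exactly the ordered chain of odd reflections analyzed in Lemma \ref{lem: adjacent borels}: applying $\hat r_1$ to $\Pi$ produces $\Pi_{\beta_1}$, then $\hat r_2$ produces $\Pi_{\beta_2}$, and so on, so that $w_{\bar1}$ carries the standard simple system $\Pi$ (Borel $B^+$) to $\Pi_{\beta_{mn}}$ (Borel ${B^{(mn)}}^+=w_\bo(B^+)$). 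The explicit form of $\Pi_{\beta_{mn}}$ computed in the proof of Lemma \ref{lem: adjacent borels}, namely $\{\epsilon_1-\epsilon_2,\ldots,\epsilon_{n-1}-\epsilon_n,\epsilon_n-\delta_1,\delta_1-\delta_2,\ldots,\delta_{m-1}-\delta_m\}$, shows that the associated positive system has even part $\{\delta_i-\delta_j\mid i<j\}\cup\{\epsilon_i-\epsilon_j\mid i<j\}$ (unchanged from the standard one) but all odd roots negated: $(\Phi^+_{\beta_{mn}})_\bo=-\Phi^+_\bo$. In other words $w_{\bar1}$ fixes the even positive system and flips the sign of every odd root.

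Next I would apply $w_{\bar0}$, the longest element of the ordinary Weyl group $W=S_m\times S_n$, which acts on $X(T)$ by reversing the $\delta$-indices and the $\epsilon$-indices; in particular $w_{\bar0}$ sends $\Phi^+_\bz$ to $-\Phi^+_\bz$ and, since it permutes $\{\delta_i\}$ and $\{\epsilon_j\}$ among themselves, it sends the negative odd system $-\Phi^+_\bo$ to $-\Phi^+_\bo$ again (it permutes $\{\delta_s-\epsilon_t\}$ among themselves, hence permutes $\{\epsilon_t-\delta_s\}$ among themselves). Therefore $w_{\bar0}w_{\bar1}$ sends $\Phi^+_\bz\mapsto -\Phi^+_\bz$ and $\Phi^+_\bo\mapsto -\Phi^+_\bo$, i.e. it carries the full standard positive system $\Phi^+$ to $-\Phi^+$, equivalently $B^+$ to $B^-$. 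Since the transformation sending $B^+$ to the opposite Borel $B^-$ is unique in $\widehat W$ and is by \cite[Theorem 3.10]{PS} the longest element $\widehat w_\ell$ with respect to the length function on $\widehat W$, we conclude $\widehat w_\ell=w_{\bar0}w_{\bar1}$.

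The one point requiring a little care — and the main obstacle — is the bookkeeping on which positive system $w_{\bar1}$ actually produces: one must confirm that the chain of odd reflections $\hat r_1,\ldots,\hat r_{mn}$ really does reverse the sign of all $mn$ odd roots and leaves every even root positive, rather than merely reaching some other simple system. This is not automatic from counting alone, but it follows directly from the explicit inductive description of $\Pi_{\beta_i}$ in the proof of Lemma \ref{lem: adjacent borels} (each step replaces $-\beta_i$ into $\Pi_{\beta_{i-1}}$ and introduces $\beta_{i+1}$, and no even root ever changes sign along the way), together with the general fact that an odd reflection $\hat r_\beta$ on an isotropic simple root $\beta$ only flips $\beta\mapsto-\beta$ among the roots and merely re-groups the others. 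I would also invoke \cite[Theorem 3.10]{PS} for the two structural facts used: that $\widehat W$ acts simply transitively (or at least that $B^-$ has a unique preimage of $B^+$) on the relevant orbit, and that the element realizing $B^+\rightsquigarrow B^-$ is the longest element and admits the reduced expression $w_{\bar0}w_{\bar1}$ compatible with the decomposition $l(\widehat w_\ell)=l(w_{\bar0})+mn$.
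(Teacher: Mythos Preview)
Your proposal is correct and follows essentially the same line as the paper's argument: use the explicit description of $\Pi_{\beta_{mn}}$ from Lemma~\ref{lem: adjacent borels} to see that $w_{\bar1}$ flips exactly the odd part of $\Phi^+$, then apply $w_{\bar0}$ (which reverses the even part and permutes $\Phi^+_{\bar1}$ within itself) to obtain $-\Pi$, and finally invoke \cite[Theorem 3.10]{PS} to identify this with $\widehat w_\ell$. Your added discussion of the bookkeeping on the odd reflections and of why $w_{\bar0}$ stabilizes $-\Phi^+_{\bar1}$ is a helpful elaboration of points the paper leaves implicit.
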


    {
\subsubsection{Flag superschemes}\label{sec: flag pre} A general notion of flag superschems can be seen in \cite[\S2.4-2.5]{BHP}. Now we introduce the flag superschemes we will use. Keep the notations in \S\ref{sec: conv}(13).
For each $R\in\sak$, let $\tilde V(R):=\Hom_\bk(V,R)$. For a morphism $\theta: R\rightarrow R'$ in $\sak$, define $\tilde V(\theta):f\mapsto \theta\circ f$. Then $\tilde V$ becomes an affine superscheme, and  $\tilde{V}(R)$ becomes a free left $R$-module of rank $(m|n)$ with action defined by $(a.f)(v)=a(f(v))$ for $a\in R$. The natural representation of $G=\GL(V)$ on $V$ gives rise to the right action of $G$ on $\tilde{V}$.
 A flag in $\tilde{V}(R)$ is defined to be a chain $\mathfrak{f}:=(\fff_1\subset \fff_2\subset \cdots\subset \fff_{m+n})$ of direct summands of the free $R$-supermodule $\tilde{V}(R)$ with $\fff_i$ has rank $(m_i|n_i)$ with $m_i\leq m_{i+1}$ and $n_i\leq n_{i+1}$ and $m_i+n_i+1=m_{i+1}+n_{i+1}$. Let $\tilde X$ denote the functor mapping a superalgebra $R\in \sak$ to the set $\tilde{X}(R)$ of  all flags in $\tilde{V}(R)$. For a  morphism $\theta:R\rightarrow R'$, $\tilde{X}(\theta):\tilde{X}(R)\rightarrow \tilde{X}(R')$ is the map induced by composing with $\theta$. It is readily seen that the right action of $G$ on $\tilde V$ induces a right action on $\tilde X$.  Correspondingly, we get a corresponding orbit map $\tilde\pi:G\rightarrow \tilde X$ which will be used in \S\ref{sec: third}.

}

\section{Induced modules  and Serre Duality}

In this section, we first introduce the general notations and conventions on algebraic supergroups and  induced modules, which is referred to \cite[\S2]{BK03}) and \cite[\S2]{B06}).
 Then we introduce Serre duality for  higher cohomology arising from induced functor.  Suppose $G$ is a given algebraic supergroup, and $H$ its closed subsupergroup.

 \subsection{}Let $\mu$ be the multiplication in $G$. Denote by $\mu^{\#}$ the comorphism from $\mathscr{O}_G\rightarrow \mu_*(\mathscr{O}_G\otimes\scrO_G)$.
 We denote by $G\hmod$ and $H\hmod$ the supermodule categories of $G$ and $H$ respectively.  As an analogue of algebraic group case, there are induction and restriction functors:
$$\texttt{ind}_H^G:H\hmod\rightarrow G\hmod; \;\; \texttt{res}_H^G:G\hmod\rightarrow H\hmod $$
 (see \cite[\S{I.3}]{Jan3}, \cite[\S2]{B06}, \cite[\S6]{BK03}). Then $\texttt{res}$ is exact. And $\texttt{ind}_H^G$ is left exact, and  right adjoint to $\texttt{res}^G_H$. {Denote by ${R}^i\texttt{ind}^G_H$ the $i$th right derived functor of $\texttt{ind}^G_H$.}
  Suppose $M$ (resp. $N$) is a  (left) $H$-supermodule (resp. $G$-supermodule). This is equivalent to say, {$M$ is a (right) supercomodule} over $\bk[H]$ with structure map $\eta: M\rightarrow M\otimes \bk[H]$. Then there is a natural
isomorphism (a generalized tensor identity):
\begin{align}\label{eq: gen ten id}
R^i\texttt{ind}^G_H(\texttt{res}^G_H M\otimes N)\cong  M \otimes R^i\texttt{ind}^G_H N
\end{align}

\subsection{}\label{sec: Brundan's setting} Suppose there is a quotient $X$ of $G$ by $H$ with the defining morphism: $\pi: G\rightarrow X$ which satisfies (Q1)-(Q6) in \cite[\S2]{B06}. This is to say, $G/H$ is locally decomposable and $G_\ev\slash H_\ev$ is projective; etc.  Under these assumptions, we further have  the underlying purely-even scheme $X_\ev$ which  is the scheme over $\bk$ equal to $X$ itself as a topological space, with structure sheaf $\scrO_X\slash \mathscr{I}_X$. Here $\mathscr{I}_X$ is the quasi-coherent sheaf of superideas on $X$ which is defined via the presheaf sending any open subset $U$ to $\scrO_X(U)\scrO_X(U)_\bo$.
 Let $\rho: X\times G\rightarrow X$ be the right action of $G$ on $X$ induced by multiplication in $G$.  As an ordinary case, one can define a $G$-equivariant $\mathscr{O}_X$-supermodule (see \cite{B06}).
 By definition, a quasi-coherent $\scrO_XG$-supermodule means a quasi-coherent $\scrO_X$-supermodule $\mathcal{M}$ equipped with an even $\scrO_X$-supermodule map $\iota:\mathcal{M}\rightarrow \rho_*(\mathcal{M}\otimes \scrO_G)$ satisfying the compatibility axioms with respect to the $G$-action (see \cite[\S2]{B06}).
 Denote by $\mathscr{O}_XG\hmod_{\text{qcoh}}$ the category of  $G$-equivariant quasi-coherent $\mathscr{O}_X$-supermodules.
For example, $\pi_*\mathscr{O}_G$ is a quasi-coherent $\mathscr{O}_XG$-module with structure map $\pi_*\mu^{\#}:\pi_*\mathscr{O}_G\rightarrow \rho_*(\pi_*\mathscr{O}_G\otimes \mathscr{O}_G)$. Also,
$\bk[H]_{\text{trivial}}\otimes \pi_*\mathscr{O}_G$ is a quasi-coherent $\mathscr{O}_XG$-module with structure map $\id_{\bk[H]}\otimes\pi_*\mu^{\#}$.

 Furthermore, as the ordinary case, there is a functor  $\mathscr{L}$ from $H\hmod$ to the category  $\mathscr{O}_XG\hmod_{\text{qcoh}}$ of quasi-coherent $\mathscr{O}_XG$-supermodules as blow. Suppose $M\in H\hmod$, then $M$ is a (right) cosupermodule over $\bk[H]$ with structure map $\eta: M\rightarrow M\otimes \bk[H]$. Define the quasi-coherent $\mathscr{O}_XG$-supermodule $\mathscr{L}(M)$ to be the kernel of the map $\eta\otimes \id_{\pi_*\mathscr{O}_G}-\id_M\otimes \delta$ where $\delta=\pi_*\bar\mu^{\#}$ is a natural $\mathscr{O}_XG$-supermodule map from $\pi_*\mathscr{O}_G$ to $\bk[H]_{\text{trivial}}\otimes \pi_*\mathscr{O}_G$, and
 $\bar\mu$ is the restriction of $\mu$ to $H\times G$.
Actually, $\mathscr{L}(M)$ is just the associated sheaf of $X$ with respect to $M\in H\hmod$ (see \cite[\S{I.5.8}]{Jan3} and \cite[\S2]{B06}).

\subsection{}   By \cite[Corollary 2.4]{B06}, we have
 \begin{align}\label{eq: ind and ass}
 {R}^i \texttt{ind}^G_H(-)\cong H^i(X,-)\circ \mathscr{L}.
 \end{align}

 Write ${\textsf{i}}:X_\ev\rightarrow X$ for the canonical closed immersion which is  $G_\ev$-equivariant.  There is a natural restriction functor  $\texttt{res}^G_{G_\ev}: \scrO_{X}G\hmod_{\text{qcoh}}\rightarrow  \scrO_{X}G_\ev\hmod_{\text{qcoh}}$. And the functor $\scrL_\ev$ can be defined as a restriction of $\scrL$ to $H_\ev\hmod$. Then $\scrL_\ev$ is a  functor from $H_\ev\hmod$ to the category  $\scrO_{X_\ev}G_\ev\hmod_{\text{qcoh}}$.

 \begin{lemma}\label{lem: even res}
  (\cite[\S2(6) and Lemma 2.5]{B06}) For $M\in H\hmod$, and $\mathcal{M}\in\scrO_XG\hmod$, the following statements hold.
  \begin{itemize}
  \item[(1)]  ${\textsf{i}}^{\,*}(\texttt{res}^G_{G_\ev}\scrL(M))\cong \scrL_\ev(\texttt{res}^H_{H_\ev}M)$.
 \item[(2)] $H^i(X, \texttt{res}^G_{G_\ev}\mathcal{M})\cong \texttt{res}^G_{G_\ev} H^i(X,\mathcal{M})$.
 \end{itemize}
 \end{lemma}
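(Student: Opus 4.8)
The plan is to deduce both statements from the explicit local description of the associated-sheaf functor, following \cite[\S2]{B06}. For part~(1), I would first invoke the quotient hypotheses (Q1)--(Q6), in particular that $G/H$ is locally decomposable: this yields an open cover $\{U_\alpha\}$ of $X$ over which $\scrL(M)$ is trivial, $\scrL(M)|_{U_\alpha}\cong M\otimes_\bk\scrO_X|_{U_\alpha}$, with transition maps on $U_\alpha\cap U_\beta$ given by the natural action on $M$ of an $H$-valued function $g_{\alpha\beta}$ arising from the local decomposition of $\pi$. Next I would use that $\textsf{i}\colon X_\ev\to X$ is a closed immersion which is a homeomorphism on underlying spaces, so that $\textsf{i}^*\mathcal{F}=\mathcal{F}/\scrI_X\mathcal{F}$ for every quasi-coherent $\scrO_X$-module $\mathcal{F}$; applying this to $M\otimes\scrO_X|_{U_\alpha}$ and using the identity $\scrI_X\,(M\otimes\scrO_X)=M\otimes\scrI_X$ identifies $\textsf{i}^*\bigl(\scrL(M)|_{U_\alpha}\bigr)$ with $M\otimes\scrO_{X_\ev}|_{(U_\alpha)_\ev}$, the transition maps now being the action of the $H_\ev$-valued function obtained from $g_{\alpha\beta}$ via the even reduction $\scrO_X\to\scrO_{X_\ev}$ (equivalently, by reducing the $\bk[H]$-comodule structure of $M$ modulo odd elements). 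Finally I would observe that this is exactly the gluing datum defining $\scrL_\ev(\texttt{res}^H_{H_\ev}M)$, since $\scrL_\ev$ is assembled from the same reduced data and the $H_\ev$-action on $M$ is the restriction of the $H$-action; the local isomorphisms then patch, and $G_\ev$-equivariance of $\textsf{i}$ upgrades the result to an isomorphism in $\scrO_{X_\ev}G_\ev\hmod_{\text{qcoh}}$.

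For part~(2), I would begin by noting that after forgetting equivariance both sides are literally the same sheaf cohomology, so only the $G_\ev$-module structures must be matched. The plan here is to recall how an equivariant structure induces a comodule structure on cohomology: one applies $H^i(X,-)$ to the structure map $\iota\colon\mathcal{M}\to\rho_*(\mathcal{M}\otimes\scrO_G)$, and since $G$ is affine---so that $\scrO_G$ is flat and the K\"{u}nneth/base-change identification $H^i\bigl(X,\rho_*(\mathcal{M}\otimes\scrO_G)\bigr)\cong H^i(X\times G,\mathcal{M}\otimes\scrO_G)\cong H^i(X,\mathcal{M})\otimes\bk[G]$ holds---one obtains the comodule map $H^i(X,\mathcal{M})\to H^i(X,\mathcal{M})\otimes\bk[G]$ that defines the $G$-action. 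The equivariant structure carried by $\texttt{res}^G_{G_\ev}\mathcal{M}$ is $\iota$ postcomposed with the restriction $\rho_*(\mathcal{M}\otimes\scrO_G)\to(\rho_\ev)_*(\mathcal{M}\otimes\scrO_{G_\ev})$ along the closed immersion $X\times G_\ev\hookrightarrow X\times G$; applying $H^i(X,-)$ and using naturality of the K\"{u}nneth identification with respect to that immersion, the resulting comodule map is the composite $H^i(X,\mathcal{M})\to H^i(X,\mathcal{M})\otimes\bk[G]\to H^i(X,\mathcal{M})\otimes\bk[G_\ev]$, which is precisely the structure map of $\texttt{res}^G_{G_\ev}H^i(X,\mathcal{M})$. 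Hence the two $G_\ev$-module structures agree, and the claimed isomorphism follows.

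I expect the main obstacle to lie in part~(1): one must be careful to compute $\textsf{i}^*$ from the local trivializations rather than from the kernel presentation of $\scrL(M)$---this is legitimate because local decomposability forces $\scrL(M)$ to be locally $\scrO_X$-free, so $\textsf{i}^*$ contributes no derived term---and one must keep the transition cocycle straight as it passes through the even reduction. Part~(2), by contrast, is a formal consequence of the functoriality of sheaf cohomology together with the affineness of $G$, the super signs entering only through the standard Koszul conventions.
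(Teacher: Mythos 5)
The paper itself gives no proof here: the lemma is imported verbatim from Brundan \cite{B06} (his \S2(6) for part~(2), his Lemma~2.5 for part~(1)), so there is no internal proof to compare against. Your reconstruction follows what Brundan's own arguments do, and it is correct.

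For part~(1), the essential ingredients you use are exactly the right ones: local decomposability (Q1)--(Q6) gives trivializing opens $U_\alpha$; the closed immersion $\textsf{i}\colon X_\ev\to X$ is a homeomorphism on spaces, hence $\textsf{i}^*\mathcal{F}=\mathcal{F}/\scrI_X\mathcal{F}$; and the $H$-valued transition cocycle reduces to an $H_\ev$-valued one after passing to $\scrO_{X_\ev}$. The last step deserves a word you do not quite spell out but clearly have in mind: since $\scrO_{X_\ev}$ has trivial odd part, every $H$-point of $\scrO_{X_\ev}(U)$ factors through the purely-even closed subgroup, i.e. $H(\scrO_{X_\ev}(U))=H_\ev(\scrO_{X_\ev}(U))$; this is precisely why the reduced cocycle is literally the one gluing $\scrL_\ev(\texttt{res}^H_{H_\ev}M)$. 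Your remark that local freeness kills any derived contribution of $\textsf{i}^*$ is correct and worth keeping.

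For part~(2), the observation that the underlying sheaves (hence the cohomology groups) coincide after forgetting equivariance, so only the comodule structures need to be matched, is the right reduction, and the naturality-of-K\"unneth argument is how one does it. One small point you elide: the structure map lands in $\rho_*(\mathcal{M}\otimes\scrO_G)$ where $\rho$ is the action, not the projection; the chain $H^i(X,\rho_*(\mathcal{M}\otimes\scrO_G))\cong H^i(X\times G,\mathcal{M}\otimes\scrO_G)\cong H^i(X,\mathcal{M})\otimes\bk[G]$ implicitly uses the twist automorphism $(x,g)\mapsto(xg,g)$ of $X\times G$ carrying $\rho$ to $\mathrm{pr}_1$. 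Since that isomorphism restricts to $X\times G_\ev$, the naturality you invoke at the end still goes through. Not a gap, but a bookkeeping step worth recording explicitly.

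In summary: correct, and essentially the same route as the source the paper cites.
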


\subsection{}\label{sec: third} In the following we suppose $G=GL(m|n)$ . 
Consider $X=G\slash B$ for any Borel subgroup $B$ of $G$.
In this case, the purely-even scheme $X_\ev$ is just a quotient of $G_\ev$ by $B_\ev$, a flag variety.  Basically, $G_\ev\slash B_\ev$ is projective and G/B is locally decomposable. Consequently, Brundan's assumptions (Q1-Q6) satisfy (cf.  \cite[\S4.4]{MT}), we have a quotient $\pi: G\rightarrow X$.     {A flag superscheme and projective superscheme associated with $X$ can be considered (see \cite{BHP} for general definitions and properties). Precisely, we can proceed with arguments on flag superschemes of $X$ which is actually  $\tilde X$ in \S\ref{sec: flag pre}, by exploiting the arguments in \cite[\S3]{B06}.}

 We remind again, for simplicity, we will  omit the adjunctword ``super" for   super-modules, super-subgroups, etc.

    {We could not have found any satisfactory reference for the Serre duality for the flag superscheme $X$ in characteristic $p$ until we found very recently-published paper \cite[\S3.5]{BHP} which deals with general superschemes.  On the other hand, as was done by Brundan in \cite[\S5]{B06}, we  can state  the super version of Serre duality theorem with the sheaf cohomology on $X$, mimicking the arguments for the version on complex supermanifold of classical Serre duality in  \cite{OP} (or the ones on complex superschemes in  \cite[\S2]{VMP}).

%

We first recall that the Berezinian $\texttt{Ber}_X(S)$ of a free $\co_X$-module $S$ of rank $(l|q)$   is by definition a free $\co_X$-module  of rank $(1|0)$ (if $q$ is even) or $(0|1)$ (if $q$ is odd), which is functorial with respect to even isomorphisms of $\co_X$-modules and coincides with $\bigwedge^lS$ for $q=0$ (see \cite[\S3.4]{Man}). One has a simple generalization of the above $\texttt{Ber}$ to the case of a locally free $\co_X$-module. Set $\texttt{Ber}(X)=(\texttt{Ber}_X \Omega^1X)^*$, where $\Omega^1X$ stands for the differential form of degree $1$.

Next, denote by $E^*$ the dual space of $\scrI_X\slash \scrI_X^2$, and by $\Omega^{l(w_\bz)}(X_\ev)$ the differential form of degree $l(w_\bz)=\dim X_\ev$. By the same arguments as in \cite[Lemma 1]{OP},  we have $$\texttt{Ber}(X)\cong
\bwedge(E^*)\otimes_{\scrO_{X_\ev}}\Omega^{l(w_\bz)}(X_\ev).$$
Furthermore, we have
\begin{lemma}\label{lem: Ber}
$$\texttt{Ber}(X)\cong 
\scrL_\ev(\bwedge(\ggg\slash \bbb^-)_\bo\otimes \bk_{-2\rho_\bz}).$$
\end{lemma}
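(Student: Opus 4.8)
The plan is to match, one factor at a time, the two tensor factors in the already-established isomorphism
\[
\texttt{Ber}(X)\cong \bwedge(E^{*})\otimes_{\scrO_{X_\ev}}\Omega^{l(w_\bz)}(X_\ev)
\]
with associated sheaves of explicit $B_\ev$-modules, and then to glue the pieces back together using that $\scrL_\ev$ is symmetric monoidal on finite-dimensional $B_\ev$-modules: over the flag variety $X_\ev\cong G_\ev/B_\ev$ it is nothing but the associated-bundle construction $M\mapsto G_\ev\times^{B_\ev}M$, which carries tensor products to tensor products and exterior algebras to exterior algebras. Throughout, $\bbb^-=\Lie(B)$ for the Borel $B$ used to form $X=G/B$, and $x_0=eB$ denotes the base point of $X_\ev$, whose $G_\ev$-stabilizer is $B_\ev$.

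First I would treat the even factor. The scheme $X_\ev$ is the smooth projective flag variety $G_\ev/B_\ev$, so $\Omega^{l(w_\bz)}(X_\ev)$ is its canonical sheaf; being $G_\ev$-equivariant, it is recovered from its fibre at $x_0$, namely $\bigwedge^{l(w_\bz)}\!\big((\ggg_\ev/\bbb^-_\ev)^{*}\big)$, on which $B_\ev$ acts through the character $-\sum_{\alpha\in\Phi^+_\bz}\alpha=-2\rho_\bz$ (the positive even roots occur, rather than their negatives, because $B=B^-$). Hence $\Omega^{l(w_\bz)}(X_\ev)\cong\scrL_\ev(\bk_{-2\rho_\bz})$; this is the standard computation of the dualizing sheaf of a flag variety (cf. \cite[\S{II.4}]{Jan3}).

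Next I would treat the odd factor $E=\scrI_X/\scrI_X^{2}$, the conormal sheaf of the closed immersion $\textsf{i}:X_\ev\hookrightarrow X$. From the description of $\scrI_X$ via the presheaf $U\mapsto\scrO_X(U)\scrO_X(U)_\bo$ together with local decomposability of $G/B$ — so that on the big cell $\scrO_X$ is a free exterior $\scrO_{X_\ev}$-algebra on $mn$ odd coordinate functions — one gets that $E$ is a $G_\ev$-equivariant locally free $\scrO_{X_\ev}$-module of rank $mn$, hence again determined by its fibre at $x_0$. That fibre is the odd part $(\mathfrak{m}_{x_0}/\mathfrak{m}_{x_0}^{2})_\bo$ of the cotangent space of $X$ at $x_0$, i.e. $((\ggg/\bbb^-)_\bo)^{*}$ as a $B_\ev$-module (this can also be extracted from Lemma \ref{lem: even res}(1) applied to $\pi_{*}\scrO_G$). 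Therefore $E\cong\scrL_\ev\big(((\ggg/\bbb^-)_\bo)^{*}\big)$, so $E^{*}\cong\scrL_\ev\big((\ggg/\bbb^-)_\bo\big)$, and since $\scrL_\ev$ commutes with forming exterior algebras, $\bwedge(E^{*})\cong\scrL_\ev\big(\bwedge(\ggg/\bbb^-)_\bo\big)$.

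Finally, combining the two identifications with the monoidality of $\scrL_\ev$ gives
\begin{align*}
\texttt{Ber}(X)&\cong\bwedge(E^{*})\otimes_{\scrO_{X_\ev}}\Omega^{l(w_\bz)}(X_\ev)\\
&\cong\scrL_\ev\big(\bwedge(\ggg/\bbb^-)_\bo\big)\otimes_{\scrO_{X_\ev}}\scrL_\ev(\bk_{-2\rho_\bz})\\
&\cong\scrL_\ev\big(\bwedge(\ggg/\bbb^-)_\bo\otimes\bk_{-2\rho_\bz}\big),
\end{align*}
which is the claim. I expect the main obstacle to be the third step: making the $G_\ev$-equivariant identification $\scrI_X/\scrI_X^{2}\cong\scrL_\ev\big(((\ggg/\bbb^-)_\bo)^{*}\big)$ genuinely precise — verifying the equivariant structure and that the chosen local model for $G/B$ exhibits the odd conormal directions as exactly the $B_\ev$-module $((\ggg/\bbb^-)_\bo)^{*}$ — together with the routine but error-prone bookkeeping of the sign conventions for $\rho_\bz$ and for the normalization of $\scrL_\ev(\bk_\mu)$.
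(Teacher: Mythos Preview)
Your proposal is correct and follows essentially the same route as the paper: both start from the already-recorded decomposition $\texttt{Ber}(X)\cong\bwedge(E^{*})\otimes_{\scrO_{X_\ev}}\Omega^{l(w_\bz)}(X_\ev)$, identify $\Omega^{l(w_\bz)}(X_\ev)\cong\scrL_\ev(\bk_{-2\rho_\bz})$ via \cite[\S{II.4.2}]{Jan3}, identify $E^{*}\cong\scrL_\ev((\ggg/\bbb^{-})_\bo)$, and then tensor. The only difference is that where you argue the last identification by computing the $B_\ev$-module fibre at the base point, the paper simply quotes \cite[Lemma~2.6]{B06} for $E^{*}\cong\scrL_\ev((\ggg/\bbb^{-})_\bo)$ directly.
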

\begin{proof}
By \cite[Lemma 2.6]{B06}, $E^*\cong \scrL_\ev((\texttt{Lie} G\slash \texttt{Lie} B)_\bo)$. Hence $$\bwedge(E^*)\cong \scrL_\ev(\bwedge(\ggg\slash \bbb^-)_\bo).$$
By a known result in the case of reductive algebraic groups, we have $\Omega^{l(w_\bz)}(X_\ev)\cong \scrL_\ev(\bk_{-2\rho_\bz})$ (cf.  \cite[\S{II.4.2(6)}]{Jan3}). The lemma follows.
\end{proof}
 }

    {Note that the canonical projective morphism $f: X\rightarrow \text{Spec}(\tbk)$ is a proper smooth  morphism of superschemes of finite type, which has relative dimension $(\ell(w_\bz)\mid \dim \bbb_\bo^-)$. Then $\texttt{Ber}(X)$ can be regarded as $\Omega^1_{X\slash \text{Spec}(\tbk)^{\ev}}$ the sheaf of relative even differentials. By the same arguments as in \cite[\S2.2]{VMP}, we have the following version of Proposition 2.3 of \cite{VMP} on our case.}

\begin{theorem}\label{Serre duality}
 Keep the notations as above. Then $\texttt{Ber}(X)$ is a dualizing sheaf of $X$, and there is a natural isomorphism
$$ (R^i\texttt{ind}^G_BM )^*\cong R^{l(w_\bz)-i}\texttt{ind}^G_B (M^*\otimes \texttt{Ber}(X)). $$
\end{theorem}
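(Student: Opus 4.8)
The plan is to establish Serre duality for the flag superscheme $X = G/B$ by reducing, via the associated-sheaf functor $\scrL$ and the identification $R^i\texttt{ind}^G_B \cong H^i(X,-)\circ \scrL$ from (\ref{eq: ind and ass}), to a statement purely about sheaf cohomology on the superscheme $X$, and then to invoke the superscheme Serre duality machinery of \cite{VMP} (or \cite{BHP}). First I would record the geometric input: by \S\ref{sec: third}, $f:X\to\Spec(\tbk)$ is a proper smooth morphism of superschemes of finite type, of relative dimension $(\ell(w_\bz)\mid\dim\bbb^-_\bo)$. As noted just before the theorem, the sheaf of relative even differentials is $\texttt{Ber}(X)$, which by Lemma \ref{lem: Ber} equals $\scrL_\ev(\bwedge(\ggg/\bbb^-)_\bo\otimes\bk_{-2\rho_\bz})$ — though for the duality statement itself only the abstract dualizing property is needed. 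The relevant fact from \cite[\S2.2, Prop.~2.3]{VMP} is that for a proper smooth superscheme of relative dimension $(d\mid q)$ over a field (or base), the Berezinian sheaf $\texttt{Ber}(X)$ is a dualizing sheaf: for any coherent (or suitable quasi-coherent) $\scrO_X$-supermodule $\mathcal{F}$ there is a natural perfect pairing $H^i(X,\mathcal F)\times H^{d-i}(X,\mathcal F^\vee\otimes\texttt{Ber}(X))\to\tbk$, where $d=\ell(w_\bz)$, hence a natural isomorphism $H^i(X,\mathcal F)^*\cong H^{d-i}(X,\mathcal F^\vee\otimes\texttt{Ber}(X))$.

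Next I would apply this with $\mathcal F = \scrL(M)$ for a finite-dimensional $M\in B\hmod$. The key computation is to identify the $\scrO_X$-supermodule dual of an associated sheaf: one shows $\scrL(M)^\vee\cong\scrL(M^*)$ (as $G$-equivariant $\scrO_X$-supermodules, or at least as $\scrO_X$-supermodules, which is all that is needed for the cohomology statement), where $M^*$ carries the contragredient $B$-action. This is the super analogue of \cite[\S{II.4.2}]{Jan3}: locally on $X$, $\scrL(M)$ is free of the same rank as $M$, and the duality functor $\mathcal{H}om_{\scrO_X}(-,\scrO_X)$ commutes with the descent defining $\scrL$, so $\scrL(M)^\vee = \scrL(M^*)$. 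Plugging $\mathcal F=\scrL(M)$ into the duality isomorphism and using (\ref{eq: ind and ass}) twice — once as $H^i(X,\scrL(M))=R^i\texttt{ind}^G_B M$ and once as $H^{d-i}(X,\scrL(M^*)\otimes\texttt{Ber}(X)) = H^{d-i}(X,\scrL(M^*\otimes\texttt{Ber}(X)'))$ where $\texttt{Ber}(X)$ is pulled back to an honest $B$-module twist $M\mapsto M^*\otimes(\text{appropriate one-dimensional or rank-}(0|1)\text{ piece})$ via Lemma \ref{lem: Ber} — yields exactly $(R^i\texttt{ind}^G_B M)^*\cong R^{l(w_\bz)-i}\texttt{ind}^G_B(M^*\otimes\texttt{Ber}(X))$. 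Here I am using the generalized tensor identity (\ref{eq: gen ten id}) to move the locally free twist $\texttt{Ber}(X)$ in and out of $\scrL$, matching the notational convention in the statement that writes $\texttt{Ber}(X)$ for the corresponding $B$-module.

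The main obstacle I expect is not the bookkeeping but pinning down the correct general form of Serre duality for superschemes in characteristic $p$ and verifying that $X=G/B$ satisfies its hypotheses — properness, smoothness, and finiteness of cohomology for the sheaves $\scrL(M)$ in question. The excerpt itself flags this: there was no satisfactory reference until \cite[\S3.5]{BHP}, and the cited approach adapts \cite{OP}/\cite{VMP}. Concretely one must check that $X$ is a projective superscheme (so cohomology is finite-dimensional), that the Berezinian relative-dualizing sheaf behaves as in \cite{VMP} over a field of positive characteristic (no characteristic-zero input is actually used in the Berezinian formalism, so this should go through), and that the trace map / pairing is natural enough to be compatible with the $G$-equivariant structure so that the resulting isomorphism is an isomorphism of $G$-modules, not merely of vector spaces. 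Once the superscheme Serre duality is available in the required generality — which is exactly what \cite[\S2.2]{VMP} provides and what \S\ref{sec: third} has set up via Brundan's (Q1)–(Q6) and Lemma \ref{lem: Ber} — the remaining steps are the two routine identifications above (dual of an associated sheaf; transfer of the Berezinian twist through $\scrL$ via (\ref{eq: gen ten id})), and the theorem follows.
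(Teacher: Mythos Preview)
Your proposal is correct and follows essentially the same route as the paper: both argue that $X=G/B$ is proper smooth of relative dimension $(\ell(w_\bz)\mid\dim\bbb^-_\bo)$ with $\texttt{Ber}(X)$ as dualizing sheaf, and then invoke the superscheme Serre duality of \cite[\S2.2, Prop.~2.3]{VMP} (cf.\ \cite{BHP}). The paper's own proof is in fact just this citation after recording the geometric hypotheses; your write-up actually supplies more detail---the identification $\scrL(M)^\vee\cong\scrL(M^*)$ and the use of (\ref{eq: gen ten id}) to pass $\texttt{Ber}(X)$ through $\scrL$---than the paper does, so nothing is missing.
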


    {
\begin{remark} The reader can refer to \cite[Propositions 3.22 and 3.24]{BHP} for general theory of super version of Serre duality,  comparing it with  the ordinary case \cite[III.7.7]{Ha} for projective schemes and \cite[II.4.2]{Jan3} for flag varieties in prime characteristic.
\end{remark}
}

\subsection{Super analogue of Mackey imprimitivity theorem} Keep the notations and assumptions in \S\ref{sec: Brundan's setting}. Additionally, suppose $L$ is an affine supergroup scheme which is a closed subsupergroup of $G$, there is a morphism of supergroup schemes $L\rightarrow X=G\slash H$ giving rise to  epimorphism of structural sheaves. Then one has a super analogue of Mackey imprimitivity theorem \cite[Theorem 4.1]{CPS} for algebraic groups.
\begin{theorem}(\cite[Theorem 10.1]{Z1})\label{thm: zubakov}
 Keep the above notations and assumptions. Then for any $H$-supermodule $M$, there is an isomorphism of $L$-supermodules
$$ \texttt{Res}^G_L(R^i\texttt{ind}^G_H(M))\cong R^i\texttt{ind}^L_{L\cap H}(\texttt{Res}^H_{L\cap H}(M)).$$
\end{theorem}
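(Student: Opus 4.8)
The plan is to mimic the proof of the Mackey imprimitivity theorem \cite[Theorem 4.1]{CPS} for algebraic groups inside Brundan's sheaf-theoretic framework, transporting everything to sheaf cohomology on $X=G/H$ via the identification $R^i\texttt{ind}^G_H(-)\cong H^i(X,-)\circ\mathscr{L}$ of (\ref{eq: ind and ass}). First I would unwind the hypotheses: the composite $L\hookrightarrow G\xrightarrow{\pi}X$ is automatically $L$-equivariant, and the stated condition on structure sheaves is precisely what is needed to conclude that $\pi|_L$ realizes $X$ as the quotient superscheme $L/(L\cap H)$, with $L\cap H$ the stabiliser of the base point $\pi(e)$. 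Since $X$ already satisfies Brundan's quotient axioms (Q1)--(Q6) (local decomposability, projectivity of $X_\ev=G_\ev/H_\ev$, etc.), these hold verbatim for the presentation $X=L/(L\cap H)$, so that $\texttt{ind}^L_{L\cap H}$ is computed on the very same superscheme $X$ by $H^i(X,-)\circ\mathscr{L}^L$, where $\mathscr{L}^L$ denotes the associated-sheaf functor attached to $\pi|_L$. Consequently both sides of the asserted isomorphism are sheaf-cohomology groups of quasi-coherent sheaves on the same topological space $X$; what remains is to identify the two sheaves together with their $L$-equivariant structures.

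The crux is therefore the sheaf-level identity
$$\texttt{Res}^G_L\,\mathscr{L}(M)\;\cong\;\mathscr{L}^L\bigl(\texttt{Res}^H_{L\cap H}M\bigr),$$
which I would verify directly from the description of $\mathscr{L}(M)$ in \S\ref{sec: Brundan's setting} as the kernel of $\eta\otimes\id_{\pi_*\mathscr{O}_G}-\id_M\otimes\delta$ with $\delta=\pi_*\bar\mu^{\#}$. Pulling this defining diagram back along $L\hookrightarrow G$ --- equivalently, restricting the $\bk[H]$-comodule $M$ to the $\bk[L\cap H]$-comodule $\texttt{Res}^H_{L\cap H}M$ and replacing $\bar\mu$ by its restriction to $(L\cap H)\times L$ --- turns it into the defining diagram for $\mathscr{L}^L(\texttt{Res}^H_{L\cap H}M)$; one then checks that the two kernels agree as $\mathscr{O}_X$-modules and that the $L$-equivariant structures coincide (the one on the left being the restriction of the $G$-equivariant structure). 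As $\mathscr{L}$ and $\mathscr{L}^L$ are exact, applying $H^i(X,-)$ and invoking (\ref{eq: ind and ass}) on the $G$-side and on the $L$-side then yields the required isomorphism in every cohomological degree $i$, compatibly with the $L$-actions. Alternatively, one can organize the same computation around the Cartesian square obtained by pulling $\pi$ back along $\pi|_L$ (which identifies the pulled-back $H$-torsor with the multiplication map $L\times H\to G$), exhibiting the base change on cohomology directly.

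The step I expect to be the main obstacle is precisely the translation of hypotheses into geometry: confirming that the condition on structure sheaves really does force $\pi|_L$ to present $X$ as $L/(L\cap H)$, and then checking that this presentation still satisfies Brundan's axioms so that (\ref{eq: ind and ass}) is legitimately available on the $L$-side --- together with the sign and equivariance bookkeeping needed to upgrade the sheaf isomorphism from merely $\mathscr{O}_X$-linear to genuinely $L$-equivariant. Since the statement is quoted from \cite[Theorem 10.1]{Z1}, for our purposes it suffices to cite it; the sketch above indicates how it fits into the present framework.
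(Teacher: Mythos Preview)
The paper does not give its own proof of this statement: it is simply quoted as \cite[Theorem 10.1]{Z1}, with no argument supplied. Your proposal correctly recognizes this at the end, and the sketch you outline --- identifying $X=G/H$ with $L/(L\cap H)$ under the stated hypothesis, establishing the sheaf-level isomorphism $\texttt{Res}^G_L\,\mathscr{L}(M)\cong\mathscr{L}^L(\texttt{Res}^H_{L\cap H}M)$, and then taking sheaf cohomology via (\ref{eq: ind and ass}) --- is exactly the shape of the argument in the cited reference (which in turn adapts \cite[Theorem 4.1]{CPS} to the super setting). So there is nothing to compare: your approach is consistent with the source, and for the purposes of this paper a citation suffices.
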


\subsection{Induced modules and their socles}\label{S4.2}
Keep the notations and assumptions as above. For a $B$-module $M$  we  by convention write $H^i(M)$ and $H^i_\ev(M)$ for $H^i(G\slash B, \scrL(M))$ and for $H^i(G_\ev\slash B_\ev, \scrL_\ev(M))$ respectively.

 Let $\textbf{k}_{\lambda}$ be the one-dimensional $B$-module of weight $\lambda$ for $\lambda\in X(T)$.
 As a usual way we write $H^i(\lambda)=H^i(\textbf{k}_\lambda))$ and $H^i_\ev(\lambda)=H^i_\ev(\textbf{k}_\lambda)$. According to \eqref{eq: ind and ass}, we have $H^i(\lambda)\cong R^i\ind^G_B(\bk_\lambda)$.
Especially, $H^0(\lambda)\cong\texttt{ind}^G_B\bk_\lambda$. In the subsequent, we will identify $\texttt{ind}^G_B\bk_\lambda$ with $H^0(\lambda)$.
Set $L(\lambda):=\texttt{Soc}_G(\texttt{ind}^G_B\bk_\lambda)$.

Furthermore, we will use the following convention.
\begin{convention}\label{con: add conv}
 Often simply write $H^i(G/B, \lambda)$ for $H^i(G/B, \scrL(\textbf{k}_\lambda))$, even write $H^i(G/B', \lambda)$ for $H^i(G/B', \scrL_{G\slash B'}(\textbf{k}_\lambda))$ where $B'$ is a given Borel subgroup, and $\scrL_{G\slash B'}(\textbf{k}_\lambda)$ is an associated sheaf on $G\slash B'$.
\end{convention}

 The following results are important for the subsequent arguments .
\begin{theorem}\label{thm: Shi thm}
 \begin{itemize}
 \item[(1)] Up to an isomorphism of $G_\ev$-modules, $H^i_\ev(\lambda)$ can be regarded a submodule of $H^i(\lambda)$ for any $i$ and $\lambda\in X(T)$. This statement holds for any Borel subgroup containing $T$.
 \item[(2)] Let ${B'}^+$  be a Borel subgroup corresponding to a positive root system ${\Phi'}^+$, $B'={B'}^{-}$ its opposite Borel. Set ${\bbb'}^-=\texttt{Lie}({B'})$. Then for any $B'$-module $M$, there is a $T$-module isomorphism
\begin{align}\label{eq: gen coho decom}
R^{i}\texttt{ind}^G_{B'}(M)\cong R^{i}\texttt{ind}^{G_\ev}_{B'_\ev}(M) \otimes \bwedge(\ggg\slash {\bbb'}^-)^*_\bo.
\end{align}
\end{itemize}
\end{theorem}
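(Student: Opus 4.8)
The plan is to reduce everything to the structural facts about the flag superscheme $X = G/B'$ already established in \S\ref{sec: third}, together with the decomposition of $\pi_*\scrO_G$ coming from Brundan's theory of $G$-equivariant sheaves. For part (2), the key observation is that there is a canonical filtration (in fact a splitting as $\scrO_{X_\ev}$-modules) of $\texttt{res}^G_{G_\ev}\scrL(M)$ whose associated graded pieces are $\scrL_\ev(\texttt{res}^{B'}_{B'_\ev} M)$ tensored with the exterior powers of the odd conormal bundle. Concretely, by \cite[Lemma 2.6]{B06} (used already in the proof of Lemma \ref{lem: Ber}) the odd part of the conormal sheaf $\scrI_X/\scrI_X^2$ is $\scrL_\ev((\ggg/{\bbb'}^-)_\bo)$, and since $X$ is locally split (Brundan's (Q1)--(Q6), which hold here by \cite[\S4.4]{MT}), one gets $\texttt{i}^{\,*}\scrO_X \cong \scrO_{X_\ev}$ and more generally $\texttt{res}^G_{G_\ev}\scrO_X \cong \scrO_{X_\ev}\otimes\bwedge(\ggg/{\bbb'}^-)_\bo^*$ as $\scrO_{X_\ev}G_\ev$-modules, the extra exterior factor being \emph{trivial} as a $G_\ev$-equivariant structure because $(\ggg/{\bbb'}^-)_\bo$ is acted on only through $B'_\ev$, not all of $G_\ev$ — here one must be a little careful: the correct statement is that after applying $\scrL$ the odd directions contribute a \emph{constant} (globally trivial $\scrO_{X_\ev}$-free) $G_\ev$-equivariant factor, exactly as in the analogous computation in \cite[\S8]{BK03} for $H^0$. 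Combining this with Lemma \ref{lem: even res}(1) gives $\texttt{res}^G_{G_\ev}\scrL(M)\cong \scrL_\ev(\texttt{res}^{B'}_{B'_\ev}M)\otimes_{\tbk}\bwedge(\ggg/{\bbb'}^-)^*_\bo$ as $\scrO_{X_\ev}G_\ev$-modules.

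From there, part (2) follows by taking cohomology: using \eqref{eq: ind and ass} to rewrite $R^i\texttt{ind}^G_{B'}(M) = H^i(X,\scrL(M))$, then Lemma \ref{lem: even res}(2) to identify $H^i(X,\texttt{res}^G_{G_\ev}\scrL(M))$ with $\texttt{res}^G_{G_\ev}H^i(X,\scrL(M))$, then pulling the constant finite-dimensional factor $\bwedge(\ggg/{\bbb'}^-)^*_\bo$ out of the cohomology (it is a trivial coefficient twist, so $H^i(X_\ev,\scrL_\ev(\texttt{res}M)\otimes\bwedge(\ggg/{\bbb'}^-)^*_\bo)\cong H^i_\ev(M)\otimes\bwedge(\ggg/{\bbb'}^-)^*_\bo$ by the projection formula), and finally re-identifying $H^i(X_\ev,\scrL_\ev(-))$ with $R^i\texttt{ind}^{G_\ev}_{B'_\ev}(-)$ via the purely-even analogue of \eqref{eq: ind and ass}. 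This yields the $T$-module (indeed $G_\ev$-module) isomorphism \eqref{eq: gen coho decom}.

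Part (1) is then essentially a corollary. The inclusion $\tbk \hookrightarrow \bwedge(\ggg/{\bbb'}^-)^*_\bo$ onto the degree-zero component is $B'_\ev$-equivariant (the degree-zero piece of an exterior algebra is the trivial module), hence induces, after applying $R^i\texttt{ind}^{G_\ev}_{B'_\ev}$ and tensoring appropriately, a $G_\ev$-module embedding of $R^i\texttt{ind}^{G_\ev}_{B'_\ev}(M)$ as the ``lowest'' summand of the right-hand side of \eqref{eq: gen coho decom}; composing with the isomorphism of part (2) exhibits $H^i_\ev(\lambda)$ (taking $M = \tbk_\lambda$) as a $G_\ev$-submodule of $H^i(\lambda)$. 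Since the argument used only the general structure of $X = G/B'$ for an arbitrary Borel $B'\supseteq T$, the statement holds for every such Borel, as claimed.

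The main obstacle I anticipate is justifying that the exterior-algebra factor $\bwedge(\ggg/{\bbb'}^-)^*_\bo$ really does split off as a \emph{$G_\ev$-equivariantly trivial} coefficient system on $X_\ev$, rather than merely as an $\scrO_{X_\ev}$-module factor twisted by some nontrivial equivariant structure; naively the odd conormal bundle is a nontrivial $G_\ev$-equivariant sheaf $\scrL_\ev((\ggg/{\bbb'}^-)_\bo)$ on $X_\ev$, and it is only \emph{after} passing to global sections / derived induction that the dependence collapses. Making this precise requires tracking the $\scrO_XG$-equivariant (not just $\scrO_X$-module) structure through Lemma \ref{lem: even res}, i.e. checking that the comodule structure map on $\scrL(M)$ restricted to $G_\ev$ decomposes compatibly with the $\scrI_X$-adic filtration — this is exactly the point where one leans on the computation in \cite[\S2]{B06} and the analogous one in \cite{BK03}, and where the hypothesis $G = \GL(m|n)$ (so that all odd roots are isotropic and ${\bbb'}^-$ is a genuine Lie superalgebra complement) is used.
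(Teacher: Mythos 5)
Your proposal for part (2) takes a genuinely different route from the paper's. The paper proves \eqref{eq: gen coho decom} by adapting Shibata's Hopf-algebra computation: taking the parabolic $P'$ with $P'_\ev = G_\ev$ and $\texttt{Lie}(P')_\bo = {\bbb'}^-_\bo$, and using \cite[Proposition 5.2, Theorems 5.4--5.5]{Shi} to decompose $\tbk[G] \cong \tbk[P']\otimes \bwedge(\ggg/{\bbb'}^-)^*_\bo$ as a right $P'$- and left $T$-module. Your sheaf-theoretic approach via the $\scrI_X$-adic filtration and \cite[Lemma 2.6]{B06} is a plausible alternative \emph{for the $T$-module statement}, but it is not what the paper does, and you have not carried it to completion.

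The genuine gap, which you half-sense but do not resolve, is the claim that \eqref{eq: gen coho decom} is ``indeed a $G_\ev$-module isomorphism,'' equivalently that the factor $\bwedge(\ggg/{\bbb'}^-)^*_\bo$ splits off as a $G_\ev$-equivariantly trivial coefficient system. This is false in general: $(\ggg/{\bbb'}^-)_\bo$ is a nontrivial $B'_\ev$-module (its weights are the negatives of the positive odd roots), so $\scrL_\ev((\ggg/{\bbb'}^-)_\bo^*)$ is a nontrivial $G_\ev$-equivariant vector bundle on $X_\ev$ and does not become a constant sheaf with trivial equivariant structure after any amount of massaging. The theorem is deliberately stated only as a $T$-module isomorphism, and that is all the Hopf-algebra decomposition yields. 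Consequently your derivation of part (1) as a corollary of part (2) collapses: a $T$-module isomorphism composed with the $B'_\ev$-equivariant degree-zero inclusion $\tbk\hookrightarrow\bwedge(\ggg/{\bbb'}^-)^*_\bo$ produces only a $T$-module embedding $H^i_\ev(\lambda)\hookrightarrow H^i(\lambda)$, whereas (1) asserts a $G_\ev$-module embedding.

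The paper proves (1) by an independent and more elementary argument that does not pass through (2) at all. It works directly with the description of $H^0(\lambda)$ as a subspace of $\tbk[G]$ of functions with right $B$-equivariance, uses the set-theoretic factorization $G(R)=G_\ev(R)G_1(R)$ via the first Frobenius kernel $G_1$ (normal in $G$) to produce a $G_\ev$-module homomorphism $H^0_\ev(\lambda)\to H^0(\lambda)$, and proves injectivity from the fact that the underlying topological space of $G$ equals that of $G_\ev$, so a function vanishing on $G_\ev$ vanishes on all of $G$. Left exactness of $\texttt{ind}^{G_\ev}_{B_\ev}$ is then invoked to extend to higher $i$. If you want a proof along your lines, you must supply a separate argument for the $G_\ev$-equivariance of the inclusion rather than extracting it from the purely $T$-equivariant statement (2); the Frobenius-kernel factorization is exactly the mechanism the paper uses for this.
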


\begin{proof} We first make an account of the first part of (1). By the same arguments for any Borel subgroup containing $T$, the second part (1) can be verified. Note that $\texttt{ind}^{G_\ev}_{B_\ev}$ is left exact. It is sufficient to show there is an injective homomorphism of  $G_\ev$-modules from $H^0_\ev(\lambda)$ into $H^0(\lambda)$. Recall
\begin{align*}
H^0(\lambda)=\{f\in \tbk[G]\mid& f(gb)=\lambda(b)^{-1}f(g) \;
\forall g\in G(R), b\in B(R)\cr &\text{ for any } R\in\sak\}.
\end{align*}
    {Recall that the first Frobenius kernel $G_1$ is a normal subgroup scheme of $G$ (see \S\ref{sec: conv}(13)).
And $G_\ev$ is a closed subgroup scheme of $G$, and there exist set-theoretic factorization $G(R)=G_\ev(R)G_1(R)$ for any $R\in \sak$ where $G_1(R)$ is a normal subgroup of $G(R)$ generated by one-parameter subgroups arising from odd root vectors}
in somewhat ``odd"-way (see \cite[Theorem 5.15]{FG12}).
So there is a natural way to define  a homomorphism of $G_\ev$-modules from $H^0_\ev(\lambda)$ to $H^0(\lambda)$. Note that as a topological space, open subsets of $G$ are by definition just the ones of $G_\ev$. So it follows that this homomorphism is injective.

    {
As to (2), it's actually simple generalization of a result on standard Borels (cf. \cite[Theorem 5.5]{Shi}, \cite[Proposition 5.2]{Z2}).  We simply make it into an account by exploiting the arguments in the proof of  \cite[Theorems 5.4 and  5.5]{Shi} to our case with  Borel $B'$.  Let $P'$ be a parabolic subgroup of $G$ such that $P'_\ev=G_\ev$ and $\Lie(P')_\bo={{\bbb_\bo'}^-}$. Then by a direct computation involving  Hopf algebras Proposition 5.2 of \cite{Shi} is valid to the case $P'$, which means, $\tbk[G]$ is isomorphic to
$$\tbk(P') \otimes  \bwedge(\ggg\slash {\bbb'}^-)^*_\bo$$
as right $P'$-left $T$-modules. Then Theorem 5.4 of \cite{Shi} and consequently Theorem 5.5 therein still hold with  $B$ being taken place by $B'$.}
\end{proof}

 As in the case of reductive algebraic groups, one has that $H^0(\lambda)$ is finite-dimensional (cf. \cite[Corollary 5.2]{Z2} or \cite[Proposition 4.17]{Shi}). As mentioned above,  $L(\lambda)=\texttt{soc}(H^0(\lambda))$. Regarded as a $B^+$-module, $\texttt{soc}_{B^+}(L(\lambda))$ is precisely the $\lambda$-weight space $H^0(\lambda)_\lambda$ coinciding with  $\tbk_\lambda$ (cf. \cite[Proposition 4.11 ]{Shi}). Consequently, $L(\lambda)$ is simple $G$-module. Furthermore,
 irreducible modules $\{L(\lambda) \mid \lambda\in X^+(T)\}$
form a complete representative set of isomorphism classes of irreducible  modules of $G$ (see \cite[Theorem 4.5]{BKu} or \cite[Theorems 4.12 and 5.5, Example 5.9]{Shi}).
  One can also describe irreducible $G$-modules by considering a kind of $T$-compatible $\texttt{Dist}(G)$-module category (see \cite{BKu}, \cite{CSW}, \cite{SW}, etc.).
Furthermore,  all these modules $\{L(\lambda) \mid \lambda\in X^+(T)\}$ are of type $\texttt{M}$ (i.e. $\mbox{End}_{G}({L}(\lambda))$ is 1-dimensional).

We sum up with the following theorem.

\begin{theorem} \label{lem8.3} Suppose that $\lambda\in X(T)$. The following statements hold.
\begin{itemize}
\item[(1)] $H^0(\lambda)$ is finite-dimensional.
\item[(2)]  $H^0(\lambda)$ is non-zero if and only if $\lambda \in X^+(T)$.
\item[(3)] The socle of $B^+$-module $H^0(\lambda)$ is precisely its $\lambda$-weight space $H^0(\lambda)_\lambda$ coinciding with  $\bk_\lambda$. And $L(\lambda)$ is simple $G$-module.
\item[(4)] The modules $\{L(\lambda) \mid \lambda\in X^+(T)\}$ form a complete set of isomorphism classes of irreducible  $G$-modules.
\item[(5)] All modules $\{L(\lambda) \mid \lambda\in X^+(T)\}$ are of type $\texttt{M}$.
\end{itemize}
\end{theorem}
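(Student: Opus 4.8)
The statement collects standard structural facts about the induced modules $H^0(\lambda)$; the plan is to deduce (1) and (2) from the reduction to the reductive group $G_\ev$ given by Theorem \ref{thm: Shi thm}(2), and to obtain (3)--(5) from the resulting control of weights together with Frobenius reciprocity and the triangular decomposition of $\Dist(G)$, following \cite{Shi,BKu}.

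For (1) and (2) I would apply the $T$-module isomorphism \eqref{eq: gen coho decom} with $B'=B$ and $M=\bk_\lambda$, namely
\[
H^0(\lambda)\cong H^0_\ev(\lambda)\otimes\bwedge(\ggg\slash\bbb^-)^*_\bo .
\]
The second factor is finite-dimensional since $(\ggg\slash\bbb^-)_\bo$ is, and $H^0_\ev(\lambda)$ is the dual Weyl module for $G_\ev$, hence finite-dimensional and nonzero precisely when $\lambda$ is dominant for $G_\ev$, i.e. $\lambda\in X^+(T)$ by item (9) of \S\ref{sec: conv} (cf. \cite[\S{II.2}]{Jan3}); as $\bwedge(\ggg\slash\bbb^-)^*_\bo$ always contains $\bk$ in degree $0$, this gives (1) and (2). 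Moreover the weights of the second factor are $\leq0$, so every weight $\mu$ of $H^0(\lambda)$ satisfies $\mu\leq\lambda$, with equality forcing the contribution $\lambda$ from the (multiplicity-one) top weight of $H^0_\ev(\lambda)$ and $0$ from the degree-$0$ part; hence $H^0(\lambda)_\lambda$ is one-dimensional, hence homogeneous, and equal to $\bk_\lambda$.

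For (3) (where I assume $\lambda\in X^+(T)$, so $H^0(\lambda)\neq0$), note first that $H^0(\lambda)_\lambda$ is a one-dimensional $B^+$-submodule, being a maximal weight space and so killed by the positive nilradical. The key step — and the one I expect to be the main obstacle — is that it is the \emph{entire} $B^+$-socle of $H^0(\lambda)=\ind^G_B\bk_\lambda$, i.e. that every nonzero $B^+$-submodule contains it; here I would invoke the analysis of the induced module in \cite[Proposition 4.11]{Shi} (cf. \cite[\S4]{BKu}) rather than reprove it, since the rest of the argument is formal. Granting $\texttt{soc}_{B^+}H^0(\lambda)=\bk_\lambda$: any simple $G$-submodule $N$ of the finite-dimensional module $H^0(\lambda)$ has $\texttt{soc}_{B^+}(N)\subseteq\texttt{soc}_{B^+}H^0(\lambda)=\bk_\lambda$, so $\bk_\lambda\subseteq N$, and since $N$ is simple it is generated by its weight-$\lambda$ vector; thus $N=\Dist(G)\cdot\bk_\lambda$ is the same for all such $N$. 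Therefore $L(\lambda)=\texttt{Soc}_G H^0(\lambda)$ is simple, with $L(\lambda)_\lambda=\bk_\lambda$, proving (3).

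For (4), a simple $G$-module $S$ is finite-dimensional (any rational module is a union of finite-dimensional submodules), so it has a maximal weight; a maximal-weight vector together with the triangular decomposition $\Dist(G)=\Dist(U^-)\Dist(T)\Dist(U^+)$ (\cite{BKu}) shows that this maximal weight $\mu$ is unique, and the $\mathfrak{sl}_2$-theory attached to the even simple roots gives $\mu\in X^+(T)$. Then $\bk_\mu$ is a $B$-module quotient of $S$, so $\Hom_G(S,H^0(\mu))\cong\Hom_B(S,\bk_\mu)\neq0$ by Frobenius reciprocity; simplicity of $S$ gives $S\hookrightarrow H^0(\mu)$, hence $S\subseteq\texttt{Soc}_G H^0(\mu)=L(\mu)$ and $S\cong L(\mu)$. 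Distinct dominant weights give distinct highest weights, so $\{L(\lambda)\mid\lambda\in X^+(T)\}$ is a complete irredundant list, giving (4). Finally, for (5): $\End_G(L(\lambda))$ is finite-dimensional over the algebraically closed field $\tbk$, and were $L(\lambda)$ of type $\texttt{Q}$ there would be an invertible \emph{odd} $G$-endomorphism $\theta$; being $T$-equivariant and odd, $\theta$ would send the one-dimensional homogeneous space $L(\lambda)_\lambda=\bk_\lambda$ into the opposite-parity part of $L(\lambda)_\lambda$, which is $0$, contradicting invertibility. Hence $\End_G(L(\lambda))=\tbk$ and $L(\lambda)$ is of type $\texttt{M}$.
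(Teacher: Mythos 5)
Your argument is correct and follows essentially the same route as the paper, which presents this theorem as a summary and delegates each item to the cited references (Zubkov and Shibata for finiteness and the $B^+$-socle claim, Brundan--Kujawa and Shibata for the classification). Where you differ is only in degree of detail: you derive (1)--(2) explicitly from the $T$-module decomposition of Theorem \ref{thm: Shi thm}(2) rather than citing \cite[Cor.~5.2]{Z2} / \cite[Prop.~4.17]{Shi}, reconstruct the Frobenius-reciprocity argument for (4) that the paper attributes to \cite[Thm.~4.5]{BKu}, and supply the short parity argument for type $\texttt{M}$ that the paper states without comment; the one genuinely non-formal step, that the $B^+$-socle of $H^0(\lambda)$ is exactly $\bk_\lambda$, you correctly flag as the crux and handle by the same citation to \cite[Prop.~4.11]{Shi} as the paper does.
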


\section{Induced modules: Case $\GL_A(1|1)$}\label{s3}
In this section, we assume $G=\GL(1|1)$. In this case,   there are two Borel subgroups  containing the standard torus. Both of them are  not mutually conjugate under $G$-conjugation. In this section, we  introduce  the construction of induced modules associated with different Borel subgroups. The material here mainly comes from \cite[\S 7.3]{MZ} and \cite{Z1}, which will be important to the subsequent sections.

\subsection{} We are given a commutative $\bk$-superalgebra $R$. Recall that $G(R)$ consists of invertible  matrix $\begin{pmatrix}  a &  m \\  n &  b \end{pmatrix}$ with $a,b\in R_\bz$ and $m,n\in R_\bo$. Then
$G(R)$ is generated by $\begin{pmatrix}  a &  0 \\  0 &  b \end{pmatrix}, \begin{pmatrix}  a &  m \\  0 &  b \end{pmatrix}$ and $\begin{pmatrix}  a &  0 \\  n &  b \end{pmatrix}$. In this case,  $G$ has only two Borel subgroups $B_{\beta}^+$ and $B_{-\beta}^+$, corresponding to $\Phi_{\beta}^+=\{\beta:=\delta_1-\epsilon_1\}$ and $\Phi_{-\beta}^+=\{-\beta=\epsilon_1-\delta_1\}$, respectively.
 Denote $B_{-\beta}=B_{\beta}^-$, $B_\beta=B_{-\beta}^-$.
 Let $\textbf{k}_\lambda$ be the one-dimensional $B_{\pm\beta}$-module via the torus action by weight $\lambda$.  We will simply write a weight $\lambda=i\delta_1+j\epsilon_1$ as $(i|j)$ in the following.

Recall $H^i(\lambda)\cong R^i\ind^G_B(\bk_\lambda)$. For $\GL(1|1)$, in order to avoid confusion,
we denote $H_{\pm\beta}^i(\lambda )=R^i\ind^G_{B_{\pm\beta}}(\bk_\lambda)$.
Denote by $L_{B_{\pm\beta}}(\lambda)$ the socle of $H_{\pm\beta}^0(\lambda )$.



\subsection{}\label{3.2}
We define a function $\bc_{ij}^k$ on  $2\times 2$-matrices via  $\bc_{ij}^k\begin{pmatrix}  a_{11} &  a_{12} \\  a_{21} &  a_{22} \end{pmatrix}=a^k_{ij}$. Then we have
$$\bc_{ij}^k[\begin{pmatrix}  a_{11} &  a_{12} \\  a_{21} &  a_{22} \end{pmatrix}\cdot\begin{pmatrix}  b_{11} &  b_{12} \\  b_{21} &  b_{22} \end{pmatrix}
]
=(a_{i1} b_{1j}+ a_{i2} b_{2j} )^k.$$
 Recall that the coordinate ring $\tbk[G]$ is the localization of  the polynomial ring $\tbk[\bc_{ij}\mid i,j=1,2]$ at $\texttt{det}=\bc_{11}\bc_{22}$. It is a Hopf superalgebra with comultiplication $\Delta$ via  $\Delta(\bc_{st})=\sum_{k=1,2}\bc_{sk}\otimes \bc_{kt}$, $s,t=1,2$.

 Set $\textbf{k}[G]_\lambda$ to be the subspace of $\textbf{k}[G]$ of weight $\lambda$, which is   spanned by all monomials of weight $\lambda$.
For example, take $\digamma=\bc_{11}^{a}\bc_{12}^{b}\bc_{21}^{c}\bc^d_{22}, a,b,c,d\in\mathbb Z$ and $0\leq b, c\leq 1$. The weight of $\digamma$  is $\lambda=(a+b|c+d)$.

\subsection{}\label{3.3} For $\lambda=(i|j)$, we have $(\lambda,\beta)=i+j$, denoted by $|\lambda|$.    For  $\textbf{k}[G]=\textbf{k}[\bc_{ij}|1\leq i, j\leq 2]_{\bc_{11}\bc_{22}}$, set
 $$A_\lambda=\bc_{11}^{i}\bc_{22}^{j}, B_\lambda=\bc_{11}^{i-1}\bc_{12}\bc_{22}^j, C_\lambda=\bc_{11}^{i}\bc_{21}\bc_{22}^{j-1}, D_\lambda=\bc_{11}^{i-1}\bc_{12}\bc_{21}\bc_{22}^{j-1}.
 $$

By computation,
 \begin{align*}
 \Delta(A_\lambda)&=\Delta(\bc_{11})^i\cdot\Delta(\bc_{22})^j\cr
 &=(\bc_{11}^i\otimes \bc_{11}^{i}+i\bc_{11}^{i-1}\bc_{12}\otimes \bc_{11}^{i-1}\bc_{21})(\bc_{22}^j\otimes \bc_{22}^{j}+j\bc_{21}\bc_{22}^{j-1}\otimes \bc_{12}\bc_{22}^{i-1})\cr
 &=\bc_{11}^i\bc_{22}^j\otimes \bc_{11}^i\bc_{22}^j+ j\bc_{11}^i\bc_{21}\bc_{22}^{j-1}\otimes \bc_{11}^i\bc_{12}\bc_{22}^{j-1}+\bc_{11}^{i-1}\bc_{12}\bc_{22}^j\otimes \bc_{11}^{i-1}\bc_{21}\bc_{22}^j\cr
 &\;\;\;+ij\bc_{11}^{i-1}\bc_{12}\bc_{21}\bc_{22}^{j-1}\otimes \bc_{11}^{i-1}\bc_{12}\bc_{21}\bc_{22}^{j-1}.
 \end{align*}

Thus $\Delta(A_\lambda)=A_\lambda\otimes A_\lambda+iB_\lambda\otimes C_{\lambda-\beta}+jC_\lambda\otimes B_{\lambda+\beta}+ijD_\lambda\otimes D_{\lambda}.$
Similarly, we have
  \begin{align*}&\Delta(B_\lambda)=B_\lambda\otimes Y_{\lambda-\beta}+X_\lambda\otimes B_{\lambda};\cr
  &\Delta(C_\lambda)=C_\lambda\otimes X_{\lambda+\beta}+Y_\lambda\otimes C_{\lambda};\cr
  &\Delta(D_\lambda)=D_\lambda\otimes A_\lambda-C_\lambda\otimes B_{\lambda+\beta}+B_\lambda\otimes C_{\lambda-\beta}+(A_\lambda+(j-i)D_\lambda)\otimes D_\lambda
  \end{align*}
  where $X_\lambda=A_\lambda+jD_\lambda, Y_\lambda=A_\lambda-iD_\lambda$.

\subsection{}\label{3.4} There is a natural action of $G(R)$ for $R\in\sak$ on $\bk[G]\otimes_\bk R$ in the same spirit as in  the ordinary algebraic group case.
 By the computations in \S \ref{3.2} and \S \ref{3.3},
  $$\begin{pmatrix}  a & 0\\  0 &  b \end{pmatrix} C_\lambda= X_{\lambda+\beta}\begin{pmatrix}  a & 0\\  0 &  b \end{pmatrix}C_\lambda+ C_\lambda\begin{pmatrix}  a & 0\\  0 &  b \end{pmatrix}Y_\lambda=a^{i+1}b^{j-1}C_\lambda.$$
  Similarly, $\begin{pmatrix}  a & 0\\  0 &  b \end{pmatrix} B_\lambda=a^{i-1}b^{j+1}B_\lambda$; $\begin{pmatrix}  a & 0\\  0 &  b \end{pmatrix} X_\lambda=a^{i}b^{j}X_\lambda$; and $\begin{pmatrix}  a & 0\\  0 &  b \end{pmatrix} Y_\lambda=a^{i}b^{j}Y_\lambda$.
Correspondingly, the weights of $C_\lambda$, $B_\lambda$, $X_\lambda$ and $Y_\lambda$ are $\lambda+\beta$, $\lambda-\beta$, $\lambda$  and $\lambda$, respectively.

Further computations show
\begin{align}\label{eq: basic compu for sec 4-1}
&\begin{pmatrix}  a & m\\  0 &  b \end{pmatrix} B_\lambda=a^{i-1}b^{j+1}B_\lambda+ma^{i-1}b^{j}X_\lambda,\quad\begin{pmatrix}  a & m\\  0 &  b \end{pmatrix} C_\lambda=a^{i+1}b^{j-1}C_\lambda,\cr
&\begin{pmatrix}  a & m\\  0 &  b \end{pmatrix} X_\lambda=a^{i}b^{j}X_\lambda, \quad
\begin{pmatrix}  a & m\\  0 &  b \end{pmatrix} Y_\lambda=a^{i}b^{j}Y_\lambda+(i+j)ma^{i}b^{j-1}C_\lambda;
\end{align}
and
\begin{align}\label{eq: basic compu for sec 4-2}
&\begin{pmatrix}  a & 0\\  n &  b \end{pmatrix} B_\lambda=a^{i-1}b^{j+1}B_\lambda,\quad
\begin{pmatrix}  a & 0\\  n &  b \end{pmatrix} C_\lambda=a^{i+1}b^{j-1}C_\lambda+na^{i}b^{j-1} Y_\lambda,\cr
&\begin{pmatrix}   a & 0\\  n &  b \end{pmatrix} X_\lambda=a^{i}b^{j}X_\lambda+(i+j)na^{i-1}b^{j}B_\lambda,\quad \begin{pmatrix}  a & 0\\  n  &b \end{pmatrix} Y_\lambda=a^{i}b^{j}Y_\lambda.
\end{align}

\subsection{}\label{3.5} Summarizing up, we have the following lemma.
 \begin{lemma}\label{lem: basic computation sec 4} The following statements hold.
  \begin{itemize}
  \item[(1)] The induced modules  $H_{\beta}^0(\lambda)=\bk C_\lambda+\bk Y_\lambda$; and $H_{-\beta}^0(\lambda+\beta)=\bk B_{\lambda+\beta}+\bk X_{\lambda+\beta}$.

   \item[(2)] Furthermore,
  \begin{itemize}
  \item[(2.1)] when $p=0$ or $p\nmid |\lambda|$, $H_{\beta}^0(\lambda+\beta)$ and $H_{-\beta}^0(\lambda)$ are mutually isomorphic. Both of them are irreducible.
  \item[(2.2)]  When $p\ne 0$ and $p\mid |\lambda|$, then $X_\lambda=Y_\lambda$. Consequently, $L_{B_{-\beta}}(\lambda)\cong L_{B_{\beta}}(\lambda)\cong \textbf{k}X_{\lambda}$, and
      \begin{align}\label{eq: SES for Sec 4}
   H^0_{\beta}(\lambda)/ L_{B_{\beta}}(\lambda)\cong
   (\textbf{k}C_\lambda+\textbf{k}X_\lambda)/\textbf{k}X_\lambda
   \cong L_{B_{\beta}}(\lambda+\beta)
   \end{align}
along with
\begin{align*}\label{eq: SES1 for Sec 4}
   H^0_{-\beta}(\lambda+\beta)/ L_{B_{-\beta}}(\lambda+\beta)&\cong
   (\textbf{k}B_{\lambda+\beta}+\textbf{k}
   X_{\lambda+\beta})/\textbf{k}X_{\lambda+\beta}
   \cong L_{B_{-\beta}}(\lambda).
   \end{align*}
   \end{itemize}
    \item[(3)] When $p\ne 0$ with $p\mid |\lambda|$, there is a homomorphism
       \begin{equation}\label{eq: Upsilon}
        \Upsilon_{\lambda,\beta}: H_{-\beta}^0(\lambda+\beta)\longrightarrow H_{-\beta}^0(\lambda)
        \end{equation}
        satisfying $\texttt{im}(\Upsilon_{\lambda,\beta})\cong \textbf{k}X_\lambda\cong L_{B_{\beta}}(\lambda)$ and $\texttt{ker}(\Upsilon_{\lambda,\beta})\cong \textbf{k}X_{\lambda+\beta}\cong L_{B_{\beta}}(\lambda+\beta)$.
  \end{itemize}
  \end{lemma}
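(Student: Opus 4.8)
The plan is to work entirely inside $\GL(1|1)$, using the explicit generators and the comultiplication formulas of \S\ref{3.2}--\S\ref{3.5} already at hand. Since parts (1) and (2) have been established, I know that $H^0_{-\beta}(\lambda+\beta)=\bk B_{\lambda+\beta}+\bk X_{\lambda+\beta}$ and $H^0_{-\beta}(\lambda)=\bk C_\lambda+\bk Y_\lambda$, and that when $p\mid|\lambda|$ we have $X_\lambda=Y_\lambda$ spanning the (one-dimensional) socle in each case. The first step is simply to \emph{write down} the map $\Upsilon_{\lambda,\beta}$ on the explicit two-dimensional spaces: I would try the $T$-equivariant assignment $B_{\lambda+\beta}\mapsto 0$ (or to a scalar multiple of $Y_\lambda$, if weights allowed, but the weights $\lambda-\beta$ and $\lambda$ force $B_{\lambda+\beta}$ into the kernel unless it maps into a weight-$(\lambda-\beta)$ vector of $H^0_{-\beta}(\lambda)$, which does not exist), and $X_{\lambda+\beta}\mapsto X_\lambda=Y_\lambda$. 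Here I must be careful about weights: $B_{\lambda+\beta}$ has weight $\lambda$, $X_{\lambda+\beta}$ has weight $\lambda+\beta$; while in $H^0_{-\beta}(\lambda)$, $C_\lambda$ has weight $\lambda+\beta$ and $Y_\lambda$ has weight $\lambda$. So the weight-matching forces $B_{\lambda+\beta}\mapsto (\text{scalar})\,Y_\lambda$ and $X_{\lambda+\beta}\mapsto(\text{scalar})\,C_\lambda$. I would therefore set $\Upsilon_{\lambda,\beta}(X_{\lambda+\beta})=C_\lambda$ and $\Upsilon_{\lambda,\beta}(B_{\lambda+\beta})=0$, and then check this is the right choice.

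Second, I would verify that this assignment is a $G$-module homomorphism by testing it against the three types of generators $\begin{pmatrix} a & 0\\0 & b\end{pmatrix}$, $\begin{pmatrix} a & m\\0 & b\end{pmatrix}$, $\begin{pmatrix} a & 0\\ n & b\end{pmatrix}$, using the action formulas (\ref{eq: basic compu for sec 4-1}) and (\ref{eq: basic compu for sec 4-2}). For the diagonal torus part this is automatic from weight-equivariance. For the unipotent generators, the nontrivial relations to check are, on the source, how $\begin{pmatrix} a & 0\\ n & b\end{pmatrix}$ acts on $X_{\lambda+\beta}$ (namely $a^ib^jX_{\lambda+\beta}+(i{+}j)na^{i-1}b^jB_{\lambda+\beta}$, with $i+j=|\lambda+\beta|\equiv|\lambda|\equiv0$, so the $B$-term vanishes mod $p$) and on $B_{\lambda+\beta}$ (which is $a^{i-1}b^{j+1}B_{\lambda+\beta}$, staying in $\bk B_{\lambda+\beta}$), matched against how the same element acts on $C_\lambda$ and $Y_\lambda$ in the target: $\begin{pmatrix} a & 0\\ n & b\end{pmatrix}C_\lambda=a^{i+1}b^{j-1}C_\lambda+na^ib^{j-1}Y_\lambda$ and $\begin{pmatrix} a & 0\\ n & b\end{pmatrix}Y_\lambda=a^ib^jY_\lambda$. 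The delicate point is that on $H^0_{-\beta}(\lambda)$ one has $Y_\lambda=X_\lambda$ but the action of the lower-unipotent generator on $C_\lambda$ produces a $Y_\lambda=X_\lambda$ term, and I need the source's corresponding computation to be consistent with $B_{\lambda+\beta}\mapsto 0$; the point $p\mid|\lambda|$ is exactly what kills the obstruction terms. The analogous (easier) check for the upper-unipotent generator $\begin{pmatrix} a & m\\0 & b\end{pmatrix}$ proceeds the same way.

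Third, once $\Upsilon_{\lambda,\beta}$ is confirmed to be a $G$-homomorphism, the statements about image and kernel are immediate from the explicit description: $\im(\Upsilon_{\lambda,\beta})=\bk C_\lambda$, and since $p\mid|\lambda|$ the vector $C_\lambda$ together with $X_\lambda$ spans $H^0_{-\beta}(\lambda)$ with $\bk X_\lambda$ the socle, and by (\ref{eq: SES for Sec 4}) (or its analogue) the quotient $(\bk C_\lambda+\bk X_\lambda)/\bk X_\lambda\cong L_{B_\beta}(\lambda+\beta)$; but I want $\im(\Upsilon)\cong\bk X_\lambda\cong L_{B_\beta}(\lambda)$, so I should double-check whether the natural nonzero map actually lands in the socle $\bk X_\lambda$ rather than giving $\bk C_\lambda$ --- i.e. the correct normalization is $\Upsilon_{\lambda,\beta}(X_{\lambda+\beta})=X_\lambda$, $\Upsilon_{\lambda,\beta}(B_{\lambda+\beta})=0$ is \emph{not} a homomorphism (because of the $C_\lambda$-terms appearing), whereas $\Upsilon_{\lambda,\beta}(B_{\lambda+\beta})=X_\lambda$, $\Upsilon_{\lambda,\beta}(X_{\lambda+\beta})=0$ may be forced. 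I would resolve this by re-examining the weight constraints: $B_{\lambda+\beta}$ has weight $\lambda$, matching $Y_\lambda=X_\lambda$; $X_{\lambda+\beta}$ has weight $\lambda+\beta$, matching $C_\lambda$; so the $G$-map must send $B_{\lambda+\beta}\mapsto c_1 X_\lambda$ and $X_{\lambda+\beta}\mapsto c_2 C_\lambda$, and the $G$-equivariance relations (again using $|\lambda|\equiv0$) will pin down that the only nonzero possibility has $c_2=0$ and $c_1\ne0$, giving $\im(\Upsilon_{\lambda,\beta})=\bk X_\lambda\cong L_{B_\beta}(\lambda)$ and $\ker(\Upsilon_{\lambda,\beta})=\bk X_{\lambda+\beta}\cong L_{B_\beta}(\lambda+\beta)$ by (\ref{eq: SES for Sec 4}).

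The main obstacle I anticipate is precisely this normalization/weight-bookkeeping: getting the right pair $(\text{source basis vector}, \text{target basis vector})$ so that the map is genuinely $G$-equivariant and not merely $T$-equivariant, and tracking the $(i{+}j)$-coefficients in (\ref{eq: basic compu for sec 4-1})--(\ref{eq: basic compu for sec 4-2}) to see exactly where $p\mid|\lambda|$ is used to cancel the cross-terms. This is a finite, mechanical verification on $2$-dimensional spaces, but it is the only place where something could genuinely go wrong, so I would carry it out explicitly against all three generator types before asserting the image and kernel isomorphisms; the identification of image and kernel with the relevant simple $\GL(1|1)$-modules is then a direct reading of part (2).
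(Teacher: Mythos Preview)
Your overall plan matches the paper's: define $\Upsilon_{\lambda,\beta}$ explicitly on the two-dimensional bases and verify $G$-equivariance using the formulas (\ref{eq: basic compu for sec 4-1})--(\ref{eq: basic compu for sec 4-2}). However, you have misidentified the target module. Both the source $H^0_{-\beta}(\lambda+\beta)$ and the target $H^0_{-\beta}(\lambda)$ are induced from the \emph{same} Borel $B_{-\beta}$, so by part~(1) the target has basis $\{B_\lambda, X_\lambda\}$, \emph{not} $\{C_\lambda, Y_\lambda\}$ (the latter is a basis for $H^0_\beta(\lambda)$). Once this is corrected, the weight bookkeeping collapses: $B_\lambda$ has weight $\lambda-\beta$ and $X_\lambda$ has weight $\lambda$, so there is no weight-$(\lambda+\beta)$ vector in the target at all, and $X_{\lambda+\beta}\mapsto 0$ is forced by $T$-equivariance alone, with no need for the ``$c_2=0$'' argument. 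The only freedom is $B_{\lambda+\beta}\mapsto c\,X_\lambda$, and the paper simply takes $c=1$.

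The verification that this is a $G$-homomorphism is then exactly the mechanical check you describe. The only nontrivial cases are the lower-unipotent generator acting via $\begin{pmatrix} a & 0 \\ n & b\end{pmatrix}X_\lambda = a^ib^jX_\lambda + (i{+}j)\,na^{i-1}b^jB_\lambda$ on the target, and the analogous formula for $X_{\lambda+\beta}$ on the source; in both, the cross-term is killed by $p\mid |\lambda|=i+j$, and all other generator checks are immediate. So your anticipated ``main obstacle'' of normalization disappears once the correct target basis is used: there is no $C_\lambda$ in play, and the map $B_{\lambda+\beta}\mapsto X_\lambda$, $X_{\lambda+\beta}\mapsto 0$ is (up to scalar) the unique $T$-equivariant candidate. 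This is precisely what the paper writes down.
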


   \begin{proof}
       (1) and (2) follows from  \cite[\S7.3]{MZ}.

   (3) As  $H^0_{-\beta}(\lambda+\beta)$ (resp. $H^0_{-\beta}(\lambda)$) admits a $\textbf{k}$-basis consisting of  $B_{\lambda+\beta}$ and $X_{\lambda+\beta}$ (resp. $B_{\lambda}$ and $X_{\lambda}$), we can define  a map $\Upsilon_\beta$ from $ H^0_{-\beta}(\lambda+\beta)$ to $ H^0_{-\beta}(\lambda)$ via  $\Upsilon_\beta(B_{\lambda+\beta})=X_{\lambda}$ and  $\Upsilon_\beta(X_{\lambda+\beta})=0$.
 From \S\ref{3.4}, it follows that $\Upsilon_\beta$
    is really a $G$-module homomorphism   satisfying $\texttt{im}(\Upsilon_\beta)\cong \textbf{k}X_\lambda\cong L_{B_{\beta}}(\lambda)$ and $\texttt{ker}(\Upsilon_\beta)\cong \textbf{k}X_{\lambda+\beta}\cong L_{B_{\beta}}(\lambda+\beta)$.
   \end{proof}

\subsection{}\label{sec: conv-2}  From now on, we always take $A$ to be a principal ideal domain (PID for short).  Denote by $\bbk$ the fractional field of $A$.

As in \S\ref{sec: conv}(1),  let $G_A=\GL_A(1|1)$. In the same sense,  we can talk about $(B_{\pm\beta})_A$. More generally, for a closed subgroup $H$ of $G$ we can talk about $H_A$ as long as $H$ can be defined over $\bbz$.
Then one can define the induced modules over $A$
$$H_{\pm\beta,A}^0(A_\lambda):=\ind^{G_A}_{(B_{\pm\beta})_A}(A_\lambda).$$
Here and further  the  notation $A_\lambda$ indicates the rank-one $(B_{\pm\beta})_A$-module of weight $\lambda$ over $A$.
By the same arguments, an analogue of Lemma \ref{lem: basic computation sec 4} yields
$$H_{\pm\beta,A}^0(-)=H_{\pm\beta,\bbz}^0(-)\otimes_\bbz A$$
with $H_{\pm\beta}^0(-)=H_{\pm\beta,\bbz}^0(-)\otimes_\bbz \bk$.

Let $\lambda=(i|j)$. According to \S \ref{3.4}-\ref{3.5}, we can define the following homomorphisms $$T_{A_\lambda,\beta}: H_{-\beta,A}^0(\lambda+\beta )\rightarrow H_{\beta,A}^0(\lambda)$$ 
   with \begin{equation}\label{eq4.1}
 T_{A_\lambda,\beta}(v)=\begin{cases}
      Y_{\lambda}, & v=B_{\lambda+\beta}\\
      (i+j)C_{\lambda},  &  v=X_{\lambda+\beta}.
    \end{cases}
\end{equation}

And

 $$T'_{A_\lambda,\beta}: H_{\beta,A}^0(\lambda )\rightarrow H_{-\beta,A}^0(\lambda+\beta )$$ 
   with \begin{equation}
 T'_{A_\lambda,\beta}(v)=\begin{cases}
      X_{\lambda+\beta}, & v=C_\lambda\\
      (i+j)B_{\lambda+\beta},  &  v=Y_\lambda.
    \end{cases}
\end{equation}

\section{Induced modules: Case  $\GL_A(m|n)$}\label{S6S}
   From now on we always suppose $G=\GL(m|n)$. In this section, we  keep the notations and assumptions in \S\ref{sec: conv-2}
   for some Levi subgroups of $G$ isomorphic to $\GL(1|1)$. In particular, $A$ is a given PID, and $\bbk$ is the fractional field of $A$. Suppose $\beta$ is a given odd root which is naturally isotropic, i.e. $(\beta, \beta)=0$.

\subsection{} \label{sec: mini para} Suppose $K^+$ and $K_{-\beta}^+$ are a given pair of adjacent Borel subgroups, the latter of which are produced by an odd reflection $\hat r_\beta$ from the former as in \S\ref{2.2.1}.

 We uniformly write $K^+_{\beta}$ for $K^+$. Then the purely-even groups of $K^+_{\pm\beta}$ are just $B_\ev^+$. There is a minimal parabolic subgroup $P^+(\beta)$ of $G$ containing $K^+_{\pm\beta}$.  The opposite Borels are denoted by $K_{\mp\beta}$ $(=K^-_{\pm\beta})$ respectively, this is to say, $K_{-\beta}$ is opposite to $K_\beta^+$, and $K_{\beta}$ is opposite to $K_{-\beta}^+$. The opposite parabolic subgroup is denoted by $P(\beta)$ (=$P^-(\beta)$).

 Set $\frak{k}^+_\pm=\texttt{Lie}(K^+_{\pm\beta})$, and write $\frak{k}^+=\hhh+\sum_{\gamma\in S}\ggg_\gamma$ with $S=S_\bz\cup S_\bo$ being a closed subset of $\Phi$ corresponding to $K^+_{\beta}$. Then $\frak{k}^+_-=\hhh+\sum_{\gamma\in S_-}\ggg_\gamma$ with $S_-:=S\backslash\{\beta\}\cup\{-\beta\}$. Precisely, $S_\bz=(S_-)_\bz=\Phi^+_\bz$ for the even roots,  and  $$(S_-)_\bo=S_\bo\backslash\{\beta\}\cup\{-\beta\} \text{ for the odd roots}.$$
 Similarly, we can describe $\frak{k}^-_\mp=\texttt{Lie}(K_{\mp\beta})$ with
 $\frak{k}^-_-=\hhh+\sum_{\gamma\in S}\ggg_{-\gamma}$
 and $\frak{k}^-_+=\hhh+\sum_{\gamma\in S_-}\ggg_{-\gamma}$.

Denote $K_{A,\mp\beta}=(K_{\mp\beta})_A$ and
$P_A(\beta)=P(\beta)_A$.
 As usual, we adopt the  notation $A_\lambda$ for the rank-one $K_{A,\mp\beta}$-module of weight $\lambda$ over $A$ if the context is clear. In particular, $(\lambda,\beta)=a+b$ where $a:=a_s$ and $b:=b_t$ with  $\lambda=\sum_{k=1}^ma_k\delta_k+\sum_{l=1}^nb_l\epsilon_l$ and  $\beta=\delta_s-\epsilon_t$.

 Keep in mind that $\beta$ is an odd root in $\GL(m|n)$.
  The parabolic subgroup $P(\beta)$ contains a Levi subgroup isomorphic to $\GL(1|1)$, which naturally maps to $P(\beta)\slash K_{\mp\beta}$ satisfying the condition in Theorem \ref{thm: zubakov}.   By this theorem,  there is an isomorphism of $\GL(1|1)$-modules
 \begin{align}\label{eq: red GL11 more}
 \texttt{ind}_{K_{A,\mp\beta}}^{P_A(\beta)}A_{\lambda}
 \cong \texttt{ind}_{\GL_A(1|1)\cap K_{A,\mp\beta}}^{\GL_A(1|1)}A_{\lambda}.
 \end{align}
 Thus, by Lemma \ref{lem: basic computation sec 4} the induced module  $\texttt{ind}_{K_{A,-\beta}}^{P_A(\beta)}A_{\lambda}$ (resp. $\texttt{ind}_{K_{A,\beta}}^{P_A(\beta)}A_{\lambda}$) has an $A$-basis consisting of $B_{\lambda}$ and $X_{\lambda}$ (resp. $C_\lambda$ and $Y_\lambda$).
Owing to (\ref{eq: ind and ass}), we write $H^0_{\mp\beta}(\lambda)$ for  $\texttt{ind}_{K_{\mp\beta}}^{P(\beta)}\tbk_{\lambda}$ respectively
when working over  $\tbk$.

On the other hand, it turns out that the unipotent radical of $P_A(\beta)$ acts on $\texttt{ind}_{K_{A,\mp\beta}}^{P_A(\beta)}A_{\lambda}$  trivially (cf. \cite[Proposition 11.5]{Z1}). Similar to \S\ref{sec: conv-2}, we can define $P_A(\beta)$-module homomorphisms $$T_{A_\lambda,\beta}: \texttt{ind}_{K_{A,-\beta}}^{P_A(\beta)}A_{\lambda+\beta}\rightarrow \texttt{ind}_{K_{A,\beta}}^{P_A(\beta)}A_{\lambda}$$
   with \begin{equation}\label{eq4.1'}
 T_{A_\lambda,\beta}(v)=\begin{cases}
      Y_{\lambda}, &\text{when } v=B_{\lambda+\beta}\\
      (a+b)C_{\lambda},  & \text{when } v=X_{\lambda+\beta}
    \end{cases}
\end{equation}
and
 $$T'_{A_\lambda,\beta}: \texttt{ind}_{K_{A,\beta}}^{P_A(\beta)}A_{\lambda}\rightarrow \texttt{ind}_{K_{A,-\beta}}^{P_A(\beta)}A_{\lambda+\beta}$$ 
   with \begin{equation}\label{eq4.1''}
 T'_{A_\lambda,\alpha}(v)=\begin{cases}
      X_{\lambda+\beta}, &\text{when } v=C_\lambda\\
      (a+b)B_{\lambda+\beta},  &\text{when }  v=Y_\lambda.
    \end{cases}
\end{equation}

Then we have the following lemma.
\begin{lemma}\label{lem: const composite for odd} Keep the notations as above.
 Both $T_{A_\lambda,\beta}\circ T'_{A_\lambda,\beta}$ and $T'_{A_\lambda,\beta}\circ T_{A_\lambda,\beta}$ are multiplication by $a+b$. Furthermore, when $(\lambda, \beta)=a+b\neq0$ in $\bbk$, $T_{A_\lambda,\beta}$ (resp. $T'_{A_\lambda,\beta}$) is injective and  $T_{\bbk_\lambda,\beta}=T_{A_\lambda,\beta}\otimes \bbk$ (resp. $T'_{\bbk_\lambda,\beta}=T'_{A_\lambda,\beta}\otimes \bbk$) is an isomorphism.
 \end{lemma}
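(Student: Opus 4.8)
The plan is to verify the composite formulas by direct computation on the explicit $A$-bases, and then deduce the injectivity and base-change statements by a localization argument.

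\smallskip

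First I would compute $T'_{A_\lambda,\beta}\circ T_{A_\lambda,\beta}$ on the $A$-basis $\{B_{\lambda+\beta}, X_{\lambda+\beta}\}$ of $\texttt{ind}_{K_{A,-\beta}}^{P_A(\beta)}A_{\lambda+\beta}$. Applying \eqref{eq4.1'} then \eqref{eq4.1''}: $B_{\lambda+\beta}\mapsto Y_\lambda\mapsto (a+b)B_{\lambda+\beta}$ and $X_{\lambda+\beta}\mapsto (a+b)C_\lambda\mapsto (a+b)X_{\lambda+\beta}$, so the composite is multiplication by $a+b$. Symmetrically, computing $T_{A_\lambda,\beta}\circ T'_{A_\lambda,\beta}$ on $\{C_\lambda, Y_\lambda\}$: $C_\lambda\mapsto X_{\lambda+\beta}\mapsto (a+b)C_\lambda$ and $Y_\lambda\mapsto (a+b)B_{\lambda+\beta}\mapsto (a+b)Y_\lambda$, again multiplication by $a+b$. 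These are routine once one knows the modules are free of rank two over $A$ on the indicated bases, which is exactly what the reduction \eqref{eq: red GL11 more} to $\GL(1|1)$ together with Lemma \ref{lem: basic computation sec 4} (and the $A$-form discussed in \S\ref{sec: conv-2}) provides; I would also note that $a+b=(\lambda,\beta)$ by the conventions recorded in \S\ref{sec: mini para}.

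\smallskip

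For the second assertion, suppose $(\lambda,\beta)=a+b\neq 0$ in $\bbk$. Since $A$ is a PID (hence a domain) and $a+b\in A$ is a nonzero element, multiplication by $a+b$ on a free $A$-module is injective; as $T'_{A_\lambda,\beta}\circ T_{A_\lambda,\beta}=(a+b)\cdot\id$ is injective, so is $T_{A_\lambda,\beta}$, and likewise $T'_{A_\lambda,\beta}$ is injective. Tensoring with $\bbk=\Frac(A)$ is exact, and over $\bbk$ the scalar $a+b$ becomes invertible, so $T'_{\bbk_\lambda,\beta}\circ T_{\bbk_\lambda,\beta}$ and $T_{\bbk_\lambda,\beta}\circ T'_{\bbk_\lambda,\beta}$ are both multiplication by the invertible scalar $a+b$; hence $T_{\bbk_\lambda,\beta}=T_{A_\lambda,\beta}\otimes\bbk$ is an isomorphism with inverse $(a+b)^{-1}(T'_{A_\lambda,\beta}\otimes\bbk)$, and similarly for $T'_{\bbk_\lambda,\beta}$. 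Here I use that forming induced modules and these explicit maps commute with the base change $A\to\bbk$, which is again the content of the $\GL(1|1)$-reduction together with the flatness remark $H^0_{\pm\beta,A}(-)=H^0_{\pm\beta,\bbz}(-)\otimes_\bbz A$ from \S\ref{sec: conv-2}.

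\smallskip

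The only real subtlety — and thus the step I would be most careful about — is making sure that $T_{A_\lambda,\beta}$ and $T'_{A_\lambda,\beta}$ are genuinely well-defined $P_A(\beta)$-module homomorphisms (not merely $A$-linear maps) on the modules $\texttt{ind}_{K_{A,\mp\beta}}^{P_A(\beta)}A_\lambda$, rather than only on the $\GL(1|1)$-induced modules appearing on the right of \eqref{eq: red GL11 more}. This is where one invokes that the unipotent radical of $P_A(\beta)$ acts trivially (the cited \cite[Proposition 11.5]{Z1}), so that the $P_A(\beta)$-action factors through the $\GL(1|1)$-Levi and the formulas \eqref{eq4.1'}, \eqref{eq4.1''} transport verbatim from the $\GL_A(1|1)$ computation in \S\ref{3.4}--\S\ref{3.5}. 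Once this identification is in place, everything else is formal, and the lemma follows.
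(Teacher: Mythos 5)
Your proof is correct and takes the only natural approach here: direct verification of the composites on the explicit $A$-bases using \eqref{eq4.1'}--\eqref{eq4.1''}, followed by the standard fact that a map which composes to multiplication by a nonzero element of a domain is injective, and that this element becomes invertible after tensoring with the fraction field. The paper omits the proof entirely (the lemma is stated as an immediate consequence of the defining formulas), so your write-up is, if anything, more complete; your closing remark about the well-definedness of $T_{A_\lambda,\beta}$ as a $P_A(\beta)$-homomorphism -- via triviality of the unipotent radical action -- is exactly the point the paper handles by the reduction \eqref{eq: red GL11 more} and the citation of Zubkov.
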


\subsection{}\label{4.9}
 As in the case of ordinary algebraic groups, the following statements still hold (see for example \cite[Proposition 11.3]{Z1}):
  \begin{equation}\label{eq6.0}H^i(G_A/K_{A,\mp\beta},\lambda)
   \cong R^{i}\texttt{ind}^{G_A}_{P_A(\beta)}
  (\texttt{ind}_{K_{A,\mp\beta}}^{P_A(\beta)}A_\lambda).\end{equation}
 So we can apply the functor $\texttt{ind}^{G_A}_{P_A(\alpha)}(-)$ to the maps $T_{A_\lambda,\beta}$ and $T'_{A_\lambda,\beta}$ (see \S\ref{sec: conv-2}). Then we have two homomorphisms
 $$\widetilde T_{A_\lambda,\beta}: H^0(G_A/K_{A,-\beta}, {\lambda+\beta})\rightarrow H^0(G_A/K_{A,\beta}, \lambda)$$
   and
$$\widetilde T'_{A_\lambda,\beta}: H^0(G_A/K_{A,\beta}, \lambda )\rightarrow H^0(G_A/K_{A,-\beta}, {\lambda+\beta}).$$
satisfying  that
 \begin{align}\label{eq: scale aplusb}
 \text{both } \widetilde T_{A_\lambda,\beta}\circ \widetilde T'_{A_\lambda,\beta} \text{ and }\widetilde T'_{A_\lambda,\beta}\circ\widetilde T_{A_\lambda,\beta} \text{ are multiplication by }  a+b.
   \end{align}
   Consequently,   by Lemma \ref{lem: const composite for odd} both $\widetilde T_{A_\lambda,\beta}$ and  $\widetilde T'_{A_\lambda,\beta}$ are injective if $a+b\ne 0$ in $\bbk$. Furthermore,  $\widetilde T_{\bbk,\beta}=\widetilde T_{A_\lambda,\beta}\otimes \bbk$ (resp. $\widetilde T'_{\bbk,\beta}=\widetilde T'_{A_\lambda,\beta}\otimes \bbk$) is an isomorphism whenever $(\lambda, \beta)=a+b\neq0$.

\subsection{}  Similarly we can define as above
$$\tilde \Upsilon_{\tbk_\lambda,\beta}: H^0(G/K_{-\beta}, {\lambda+\beta})\rightarrow H^0(G/K_{-\beta}, \lambda)$$
via applying the functor $\texttt{ind}^{G}_{P(\alpha)}(-)$ to the homomorphism $\Upsilon_{\lambda,\beta}$ defined in (\ref{eq: Upsilon}).

\subsection{}\label{4.8s}
Denote by  $L_{K_{\mp\beta}}(\lambda)$ the socle of $H^0(G/K_{\mp\beta}, {\lambda})$.
We have the following lemma (compatible with  \cite[Lemma 5.2]{PS} in the case of Lie superalgebras).

\begin{lemma}\label{L4.1} 
The following statements hold.

\begin{itemize}
\item[(1)] When  $(\lambda,\beta)\not\equiv0\mod p$,   both $\widetilde T_{\textbf{k}_\lambda,\beta}$ and $\widetilde T'_{\textbf{k}_\lambda,\beta}$ are isomorphisms. And $L_{K_{-\beta}}(\lambda+\beta)\cong L_{K_\beta}(\lambda)$.

\item[(2)]
 When  $(\lambda,\beta)\equiv0\mod p$,
 \begin{itemize}
 \item[(2.a)] (2.a.1) $\texttt{ker}(\widetilde T_{\bk_\lambda,\beta})=\texttt{im}(\widetilde  T'_{\bk_\lambda,\beta})\cong\texttt{coker}(\widetilde T_{\bk_\lambda,\beta})$;

(2.a.2) $\texttt{ker}(\widetilde T'_{\bk_\lambda,\beta})=\texttt{im}(\widetilde T_{\bk_\lambda,\beta})
\cong\texttt{coker}(\widetilde T'_{\bk_\lambda,\beta})$.

 \item[(2.b)]
 Furthermore, $L_{K_{-\beta}}(\lambda)\cong L_{K_\beta}(\lambda)$.
\item[(2.c)]
$\texttt{im}(\tilde\Upsilon_{\bk_\lambda, i\beta})=\texttt{ker}(\tilde\Upsilon_{\bk_\lambda,  (i-1)\beta})$ for all positive integers $i$.
 \end{itemize}
\end{itemize}
\end{lemma}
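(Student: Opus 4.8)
\emph{Proof strategy.}
Part (1) should be immediate: by \eqref{eq: scale aplusb} both composites $\widetilde T_{\tbk_\lambda,\beta}\circ\widetilde T'_{\tbk_\lambda,\beta}$ and $\widetilde T'_{\tbk_\lambda,\beta}\circ\widetilde T_{\tbk_\lambda,\beta}$ are multiplication by $(\lambda,\beta)=a+b$, which is a nonzero scalar of $\tbk$ when $(\lambda,\beta)\not\equiv0\bmod p$; hence $\widetilde T_{\tbk_\lambda,\beta}$ and $\widetilde T'_{\tbk_\lambda,\beta}$ are mutually inverse up to a unit, so both are isomorphisms of $G$-modules, and an isomorphism sends socle to socle, whence $L_{K_{-\beta}}(\lambda+\beta)\cong L_{K_\beta}(\lambda)$.

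For (2), assume $(\lambda,\beta)=a+b\equiv0\bmod p$. The plan is to pull everything back to the minimal parabolic $P(\beta)$, where the relevant maps are visibly two-dimensional. By \eqref{eq6.0}, \eqref{eq: red GL11 more} and Lemma \ref{lem: basic computation sec 4}, $H^0(G/K_{\mp\beta},\mu)=\texttt{ind}^G_{P(\beta)}(H^0_{\mp\beta}(\mu))$ with $H^0_{-\beta}(\mu)=\tbk B_\mu+\tbk X_\mu$ and $H^0_\beta(\mu)=\tbk C_\mu+\tbk Y_\mu$ as $P(\beta)$-modules, and $\widetilde T_{\tbk_\lambda,\beta}$, $\widetilde T'_{\tbk_\lambda,\beta}$, $\tilde\Upsilon_{\tbk_\lambda,\beta}$ are $\texttt{ind}^G_{P(\beta)}$ applied to $T_{\tbk_\lambda,\beta}$, $T'_{\tbk_\lambda,\beta}$, $\Upsilon_{\lambda,\beta}$ respectively. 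As $\beta$ is isotropic, $(\lambda+k\beta,\beta)=(\lambda,\beta)\equiv0\bmod p$ for all $k\in\bbz$, so Lemma \ref{lem: basic computation sec 4}(2.2) applies at every weight $\lambda+k\beta$: there $X_{\lambda+k\beta}=Y_{\lambda+k\beta}$ and the one-dimensional $P(\beta)$-submodules that occur are the $\tbk X_{\lambda+k\beta}$. Reading \eqref{eq4.1'}, \eqref{eq4.1''}, \eqref{eq: Upsilon} off these bases gives, at the level of $P(\beta)$-modules,
\[\texttt{ker}\,T_{\tbk_\lambda,\beta}=\texttt{im}\,T'_{\tbk_\lambda,\beta}=\tbk X_{\lambda+\beta},\qquad \texttt{ker}\,T'_{\tbk_\lambda,\beta}=\texttt{im}\,T_{\tbk_\lambda,\beta}=\tbk X_\lambda,\]
with $\texttt{coker}\,T_{\tbk_\lambda,\beta}\cong\tbk X_{\lambda+\beta}$ and $\texttt{coker}\,T'_{\tbk_\lambda,\beta}\cong\tbk X_\lambda$ via \eqref{eq: SES for Sec 4}, \eqref{eq: SES1 for Sec 4}, and $\texttt{im}\,\Upsilon_{\mu,\beta}=\tbk X_\mu$, $\texttt{ker}\,\Upsilon_{\mu,\beta}=\tbk X_{\mu+\beta}$, so that $\texttt{im}\,\Upsilon_{\lambda+(i-1)\beta,\beta}=\texttt{ker}\,\Upsilon_{\lambda+(i-2)\beta,\beta}=\tbk X_{\lambda+(i-1)\beta}$.

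Transferring these identities through $\texttt{ind}^G_{P(\beta)}$ is the heart of the matter. Kernels pass automatically (left exactness), but $\texttt{ind}^G_{P(\beta)}$ is only left exact --- indeed $G/P(\beta)$ is not affine, its even part being a flag variety --- so the step I expect to be the main obstacle is that images and cokernels pass too, i.e. that for each short exact sequence $0\to\tbk X_{\nu'}\to H^0_{\pm\beta}(\mu)\to\tbk X_\nu\to0$ above the connecting homomorphism $\texttt{ind}^G_{P(\beta)}(\tbk X_\nu)\to R^1\texttt{ind}^G_{P(\beta)}(\tbk X_{\nu'})$ vanishes. I would prove this via the integral model: over a discrete valuation ring $A$ with residue field $\tbk$ and characteristic-zero fraction field $\bbk$, the maps $\widetilde T_{A_\lambda,\beta}$, $\widetilde T'_{A_\lambda,\beta}$ are injective with composites equal to multiplication by $(\lambda,\beta)$ and become isomorphisms over $\bbk$, hence have torsion cokernels whose precise shape is dictated by the explicit $\GL_A(1|1)$-computations of \S\ref{s3} transported along \eqref{eq: red GL11 more}; comparison with the even-group description $H^i(G/K_{\mp\beta},\mu)\cong H^i_\ev(\mu)\otimes\bwedge(\ggg/\texttt{Lie}K_{\mp\beta})^*_\bo$ from \eqref{eq: gen coho decom} then forces the long exact cohomology sequences to break into short exact ones. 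Granting this, $\texttt{ind}^G_{P(\beta)}$ commutes with $\texttt{ker}$, $\texttt{im}$, $\texttt{coker}$ for $T_{\tbk_\lambda,\beta}$, $T'_{\tbk_\lambda,\beta}$ and the $\Upsilon_{\bullet,\beta}$, and substituting the displayed $P(\beta)$-level formulas yields (2.a.1), (2.a.2) and (2.c) at once.

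For (2.b), if $\lambda\notin X^+(T)$ both sides vanish by Theorem \ref{lem8.3}(2), so assume $\lambda\in X^+(T)$. By the reduction above, $H^0_{-\beta}(\lambda)$ and $H^0_\beta(\lambda)$ are uniserial of length two with common socle $\tbk X_\lambda$ and with $K^+_{-\beta}$- (resp.\ $K^+_\beta$-) highest weights $\lambda-\beta$ (resp.\ $\lambda+\beta$), each strictly above $\lambda$. The embedding $L_{K_{-\beta}}(\lambda)\hookrightarrow\texttt{ind}^G_{P(\beta)}(H^0_{-\beta}(\lambda))$ corresponds by Frobenius reciprocity to a nonzero $P(\beta)$-homomorphism $L_{K_{-\beta}}(\lambda)|_{P(\beta)}\to H^0_{-\beta}(\lambda)$; since $L_{K_{-\beta}}(\lambda)$ has $K^+_{-\beta}$-highest weight $\lambda$ it has no weight strictly above $\lambda$, so this homomorphism is not surjective and its image is the simple socle $\tbk X_\lambda$; thus $L_{K_{-\beta}}(\lambda)\subseteq\texttt{ind}^G_{P(\beta)}(\tbk X_\lambda)$, and likewise $L_{K_\beta}(\lambda)\subseteq\texttt{ind}^G_{P(\beta)}(\tbk X_\lambda)$. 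As each of $L_{K_{-\beta}}(\lambda)$, $L_{K_\beta}(\lambda)$ has $\lambda$-weight space $\tbk_\lambda$ (Theorem \ref{lem8.3}(3), valid for any Borel containing $T$), which is then the entire $\lambda$-weight space of $\texttt{ind}^G_{P(\beta)}(\tbk X_\lambda)$, these two simple submodules share a nonzero vector, hence coincide; in particular $L_{K_{-\beta}}(\lambda)\cong L_{K_\beta}(\lambda)$.
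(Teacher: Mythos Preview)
Your overall strategy matches the paper's: reduce to the $P(\beta)$-level via \eqref{eq6.0} and Lemma~\ref{lem: basic computation sec 4}, compute kernels/images/cokernels of $T_{\tbk_\lambda,\beta}$, $T'_{\tbk_\lambda,\beta}$, $\Upsilon$ explicitly there, and push through $\texttt{ind}^G_{P(\beta)}$. Part (1) is identical in both proofs.

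For (2.a) and (2.c) you correctly isolate the crux: $\texttt{ind}^G_{P(\beta)}$ is only left exact, so while $\ker\widetilde T=\texttt{ind}^G_{P(\beta)}(\ker T)$ is automatic, the equality $\texttt{im}\,\widetilde T=\texttt{ind}^G_{P(\beta)}(\texttt{im}\,T)$ is not. The paper does not take your integral-model route; it applies left exactness to $0\to\ker T\to H^0_{-\beta}(\lambda+\beta)\to\texttt{im}\,T\to 0$ for the kernel and asserts the image identity directly. Your DVR sketch is plausible but not carried out (you write ``Granting this'' without exhibiting the comparison that kills the connecting maps). Neither argument is fully detailed on this point; yours has the virtue of naming the difficulty.

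For (2.b) your argument has a genuine gap. You claim that $L_{K_{-\beta}}(\lambda)$ has $K^+_{-\beta}$-highest weight $\lambda$, hence no weight strictly above $\lambda$ in that ordering, so the Frobenius-reciprocity map to $H^0_{-\beta}(\lambda)$ cannot be surjective. But $L_{K_{-\beta}}(\lambda)$ is the socle of $H^0(G/K_{-\beta},\lambda)$, so by the analogue of Theorem~\ref{lem8.3}(3) its highest weight $\lambda$ is with respect to $K^+_\beta$ (the Borel opposite to $K_{-\beta}$), not $K^+_{-\beta}$. Knowing its $K^+_{-\beta}$-highest weight is essentially (2.b) itself, so the step is circular. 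Your closing weight-multiplicity claim (that the $\lambda$-weight space of $\texttt{ind}^G_{P(\beta)}(\tbk X_\lambda)$ is one-dimensional) is also unjustified as stated.

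The paper's route for (2.b) avoids this: from the $P(\beta)$-level short exact sequences $0\to L_\beta(\lambda)\to H^0_\beta(\lambda)\to L_\beta(\lambda+\beta)\to 0$ and $0\to L_\beta(\lambda)\to H^0_{-\beta}(\lambda)\to L_\beta(\lambda-\beta)\to 0$ (using $L_{-\beta}(\lambda)\cong L_\beta(\lambda)\cong\tbk X_\lambda$ from Lemma~\ref{lem: basic computation sec 4}(2.2)), left exactness gives embeddings of $\texttt{ind}^G_{P(\beta)}(\tbk X_\lambda)$ into both $H^0(G/K_\beta,\lambda)$ and $H^0(G/K_{-\beta},\lambda)$. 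Since each of the latter has simple socle, the socle of $\texttt{ind}^G_{P(\beta)}(\tbk X_\lambda)$ is simultaneously $L_{K_\beta}(\lambda)$ and $L_{K_{-\beta}}(\lambda)$, whence the isomorphism.
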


\begin{proof}
(1) Suppose $(\lambda,\beta)\not\equiv 0\mod p$. The statement follows from (\ref{eq: scale aplusb}).

(2)  Suppose $(\lambda,\beta)\equiv0\mod p$. For (2.a), keep in mind that both $K_{\pm\beta}$ have purely-even group $B_\ev$.
Hence
by Theorem \ref{thm: Shi thm}(2), we have isomorphisms as $T$-modules
\begin{align}\label{eq: decomps}
 &H^0(G/K_{-\beta}, {\lambda+\beta})\cong  H^0(G_\ev\slash B_\ev,\lambda+\beta)\otimes {\bigwedge}^\bullet(\sum_{\gamma\in S_1}\ggg_{\gamma})^* \quad{ and }\cr
 &H^0(G/K_{\beta}, \lambda)\cong H^0(G_\ev\slash B_\ev,\lambda)\otimes {\bigwedge}^\bullet(\sum_{\gamma\in (S_-)_1}\ggg_{\gamma})^*.
 \end{align}
 By the representations theory of algebraic groups (see \cite[Proposition II.2.2]{Jan3},  $H^0(G_\ev\slash B_\ev,\lambda)$ (resp.  $H^0(G_\ev\slash B_\ev,\lambda+\beta)$) has one-dimensional weight space of the $B_\ev^+$-highest weight $\lambda$ (resp. $\lambda+\beta$). In the sense of (\ref{eq6.0}) along with (\ref{eq4.1'}) and (\ref{eq4.1''}), the height weight spaces are clearly spanned by $Y_\lambda$ and $X_{\lambda+\beta}$, respectively.
On the other hand, by the arguments in \S\ref{sec: mini para} we know $S_\bz=(S_-)_\bz=\Phi_\bz^+$. Consequently,
 the even homogenous spaces of the second parts in the tensor products of the $T$-module decomposition formula (\ref{eq: decomps}) are all of negative roots, i.e. in $\Phi^-_\bz$. Hence the $B_\ev^+$-highest weight space of $H^0(G_\ev\slash B_\ev, \diamondsuit)$ remains the ones of $H^0(G/K_{\beta},\diamondsuit)$ for $\diamondsuit\in \{\lambda,\lambda+\beta\}$. Hence,  (\ref{eq4.1'}) and (\ref{eq4.1''}) are still true for $\widetilde T_{A_\lambda,\beta}$ and $\widetilde T'_{A_\lambda,\beta}$, respectively.

 Under the assumption $(\lambda, \beta)\equiv0\mod p$, by (\ref{eq4.1'}) and (\ref{eq4.1''}) we have $\texttt{ker}(T_{\bk_\lambda,\beta})=\bk X_{\lambda+\beta}$ and $\texttt{im}(T_{\bk_\lambda,\beta})=\bk Y_\lambda$, and then $\texttt{coker}(T_{\bk_\lambda,\beta})\cong \bk X_{\lambda+\beta}=\texttt{ker}(T_{\bk_\lambda,\beta})$. Similarly, $\texttt{ker}(T'_{\bk_\lambda,\beta})=\bk Y_{\lambda}$ and $\texttt{im}(T'_{\bk_\lambda,\beta})=\bk X_{\lambda+\beta}$, and then $\texttt{coker}(T'_{\bk_\lambda,\beta})\cong \bk Y_{\lambda}=\texttt{ker}(T'_{\bk_\lambda,\beta})$. Clearly, $\texttt{im}(T_{\bk_\lambda,\beta})=\texttt{ker}(T'_{\bk_\lambda,\beta})$,
  and $\texttt{im}(T'_{\bk_\lambda,\beta})=\texttt{ker}(T_{\bk_\lambda,\beta})$.

On the other hand, by  definition  $\tilde T_{\bk_\lambda,\beta}=\texttt{ind}^{G}_{P(\beta)}(T_{\bk_\lambda,\beta})$,
and  $\tilde T'_{\bk_\lambda,\beta}=\texttt{ind}^{G}_{P(\beta)}(T'_{\bk_\lambda,\beta})$.
So  $\texttt{im} (\tilde T_{\bk_\lambda,\beta})=
\texttt{ind}^{G}_{P(\beta)}(\texttt{im}(T_{\bk_\lambda,\beta}))$,
$\texttt{ind}^{G}_{P(\beta)}(\texttt{ker}(T_{\bk_\lambda,\beta}))\subset \texttt{ker}(\tilde T_{\bk_\lambda,\beta})$.
On the other hand, the functor $\texttt{ind}^{G}_{P(\beta)}(-)$
is left exact.
Hence  $\texttt{ind}^{G}_{P(\beta)}(-)$ preserves the exact sequence $$0\rightarrow\texttt{ker}(T_{\bk_\lambda,\beta})\rightarrow \texttt{ind}^{P(\beta)}_{K_{-\beta}}(\bk_{\lambda+\beta})\rightarrow \texttt{im} (T_{\bk_\lambda,\beta})\rightarrow 0.$$
The similar result also holds for $\tilde T'_{\bk_\lambda,\beta}$. Combining with the above arguments,  the two statements in (2.a) are proved.

We now check (2.b). Keep (\ref{eq: red GL11 more}) in mind.
 Turn back to the results in the case of $\GL(1|1)$.
Recall when $(\lambda,\beta)\equiv0\mod p$, $H^0_{\beta}(\lambda)$ (resp. $H^0_{-\beta}(\lambda+\beta)$) has one-dimensional socle $L_\beta(\lambda)$ (resp. $L_{-\beta}(\lambda+\beta)$) which is isomorphic to the  head of   $H^0_{-\beta}(\lambda+\beta)$ (resp. $H^0_{\beta}(\lambda)$) as $\GL(1|1)$-module in the sense of Lemma  \ref{lem: basic computation sec 4}. By the trivial action of the unipotent  subgroup of $P(\beta)$, $L_\beta(\lambda)$ can be extended to a $P(\beta)$-module.  Simply write $\texttt{ind}^{G}_{P(\beta)}(\lambda)$ for $\texttt{ind}^{G}_{P(\beta)}(L_\beta(\lambda))$. By (\ref{eq: SES for Sec 4}), there is an short exact sequence of $P(\beta)$-modules
$$ 0\rightarrow L_{\beta}(\lambda)\rightarrow H^0_{\beta}(\lambda)\rightarrow  L_{\beta}(\lambda+\beta)\rightarrow 0.  $$
Applying the      {left}  exact functor  $\texttt{ind}_{P(\beta)}^{G}(-)$, we have
  \begin{equation}\label{eqq6.4}0\rightarrow \texttt{ind}_{P(\beta)}^{G}(\lambda)\rightarrow H^0(G/K_\beta, {\lambda})\rightarrow \texttt{ind}_{P(\beta)}^{G}(\lambda+\beta)\rightarrow R^1\texttt{ind}_{P(\beta)}^{G}(\lambda)\rightarrow \cdots. \end{equation}
Thus, it follows that the socle of $\texttt{ind}_{P(\beta)}^{G}(\lambda)$ is isomorphic to $L_{K_\beta}(\lambda)$.

Similarly, we have
\begin{equation}\label{eqq6.5}0\rightarrow \texttt{ind}_{P(\beta)}^{G}(\lambda)\rightarrow H^0(G/K_{-\beta}, \lambda)\rightarrow \texttt{ind}_{P(\beta)}^{G}(\lambda-\beta)\rightarrow R^1\texttt{ind}_{P(\beta)}^{G}(\lambda)\rightarrow \cdots
\end{equation}
 This implies  the socle of $\texttt{ind}_{P(\beta)}^{G}(\lambda)$ is isomorphic to $L_{K_{-\beta}}(\lambda)$.
It is proved that  $L_{K_{-\beta}}(\lambda)\cong L_{K_{\beta}}(\lambda)$.

As to  (2.c),   $(\lambda,\beta)\equiv0\mod p$ implies   $(\lambda+i\beta,\beta)\equiv0\mod p, \forall i\in \mathbb N$. Similar to Lemma \ref{lem: basic computation sec 4}(3), there exists $\Upsilon_{\lambda, i\beta}: H^0_{-\beta}(\lambda+i\beta)\rightarrow H^0_{-\beta}(\lambda+(i-1)\beta), \forall i\in \mathbb N$ such that $\texttt{im}(\Upsilon_{\lambda, i\beta})\cong \textbf{k}X_{\lambda+(i-1)\beta}$ and $\texttt{ker}(\Upsilon_{\lambda, i\beta})\cong \textbf{k}X_{\lambda+i\beta}$. Then $\texttt{im}(\Upsilon_{\lambda,i\beta})\cong \textbf{k}X_{\lambda+(i-1)\beta}\cong \texttt{ker}(\Upsilon_{\lambda,(i-1)\beta})$. In particular, $\texttt{im}(\Upsilon_{\lambda,2\beta})\cong \textbf{k}X_{\lambda+\beta}\cong \texttt{ker}(T_{\bk_{\lambda},\beta})$.

Define  $$\tilde\Upsilon_{\bk_{\lambda}, i\beta}: H^0(G\slash K_{-\beta}, \lambda+i\beta)\rightarrow H^0(G\slash K_{-\beta}, \lambda+(i-1)\beta)$$ via $\tilde \Upsilon_{\bk_{\lambda},i\beta}=\texttt{ind}^{G}_{P(\beta)}(\Upsilon_{\lambda, i\beta})$.
Then
  we have
  \begin{align}\label{eq: ind inducing iso}
  \texttt{ind}^{G}_{P(\beta)}(\texttt{im}(\Upsilon_{\lambda, i\beta}))\cong \texttt{ind}^{G}_{P(\beta)}(\texttt{ker}(\Upsilon_{\lambda, (i-1)\beta})).
  \end{align}
   Keep it in mind that
$$\texttt{ind}^{G}_{P(\beta)}(\texttt{ker}(\Upsilon_{\lambda, i\beta}))\subseteq\texttt{ker}(\tilde\Upsilon_{\bk_{\lambda}, i\beta})\text{  and }\texttt{im}(\tilde\Upsilon_{\bk_{\lambda}, i\beta})=\texttt{ind}^{G}_{P(\beta)}(\texttt{im}(\Upsilon_{\lambda, i\beta}))$$
for all $i$. Hence, the left exactness of $\tilde\Upsilon_{\lambda,i\beta}(-)$ ensures that it preserves the following short exact sequence coming from Lemma \ref{lem: basic computation sec 4}(3)
$$0\rightarrow\texttt{ker}(\Upsilon_{\bk_\lambda,i\beta})\rightarrow \texttt{ind}^{P(\beta)}_{K_{-\beta}}(\bk_{\lambda+i\beta})\rightarrow \texttt{im} (\Upsilon_{\bk_\lambda,i\beta})\rightarrow 0. $$
 Hence, we have $\texttt{ind}^{G}_{P(\beta)}(\texttt{ker}(\Upsilon_{\lambda, i\beta}))=\texttt{ker}(\tilde\Upsilon_{\bk_{\lambda}, i\beta})$.
 In summary, (\ref{eq: ind inducing iso}) gives rise to
 the desired equality
$\texttt{im}(\tilde\Upsilon_{\bk_\lambda, i\beta})=\texttt{ker}(\tilde\Upsilon_{\bk_\lambda,  (i-1)\beta})$.

The proof is completed.
\end{proof}

As a consequence, we have
\begin{corollary}\label{cor: odd grod pre} Let $\lambda\in X^+(T)$ satisfying $(\lambda,\gamma)\ne 0$ for any odd root $\gamma$.  If $\beta$ is an odd root with $(\lambda,\beta)\equiv0\mod p$, the following equation holds in the Grothendieck group of $G$-module category
\begin{align}\label{eq: Groth eq}
[\texttt{coker}\tilde T_{\bk_\lambda,\beta}]=[H^0(G\slash K_{\beta}, \lambda)]+\sum_{k=1}^\infty(-1)^k [H^0(G\slash K_{-\beta}, \lambda+k\beta)].
\end{align}
\end{corollary}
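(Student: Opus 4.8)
The plan is to compute both sides in the Grothendieck group $K_0(G\hmod)$ by repeatedly using the homomorphisms $\tilde T_{\bk_\lambda,\beta}$, $\tilde T'_{\bk_\lambda,\beta}$ and the chain $\tilde\Upsilon_{\bk_\lambda,k\beta}$ together with the identifications from Lemma~\ref{L4.1}. Since $(\lambda,\gamma)\ne 0$ for every odd root $\gamma$, one has $(\lambda+k\beta,\gamma)\ne 0$ in $\bk$ for all $\gamma$ except possibly $\gamma=\beta$ (where $(\lambda+k\beta,\beta)\equiv 0\bmod p$ for all $k$ because $\beta$ is isotropic), so at every stage the only ``degenerate'' direction is along $\beta$; this is what keeps the bookkeeping finite. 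First I would record, from Lemma~\ref{L4.1}(2.a), the short exact sequence
\begin{equation}\label{eq: coker ses}
0\to \texttt{im}(\tilde T_{\bk_\lambda,\beta})\to H^0(G/K_\beta,\lambda)\to \texttt{coker}(\tilde T_{\bk_\lambda,\beta})\to 0,
\end{equation}
so that $[\texttt{coker}\,\tilde T_{\bk_\lambda,\beta}]=[H^0(G/K_\beta,\lambda)]-[\texttt{im}\,\tilde T_{\bk_\lambda,\beta}]$, and note $\texttt{im}(\tilde T_{\bk_\lambda,\beta})=\texttt{ker}(\tilde T'_{\bk_\lambda,\beta})$ by (2.a.2).

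Next I would analyze $[\texttt{ker}\,\tilde T'_{\bk_\lambda,\beta}]$. Since $\tilde T'_{\bk_\lambda,\beta}:H^0(G/K_\beta,\lambda)\to H^0(G/K_{-\beta},\lambda+\beta)$, we get
\begin{equation}\label{eq: ker ses}
[\texttt{ker}\,\tilde T'_{\bk_\lambda,\beta}] = [H^0(G/K_\beta,\lambda)] - [\texttt{im}\,\tilde T'_{\bk_\lambda,\beta}],
\end{equation}
and by (2.a.1) applied with $\lambda$ replaced by $\lambda+\beta$ (legitimate since $(\lambda+\beta,\beta)\equiv 0\bmod p$), $\texttt{im}(\tilde T'_{\bk_\lambda,\beta})=\texttt{ker}(\tilde T_{\bk_{\lambda+\beta},\beta})$; wait---more directly I would use (2.c): $\texttt{im}(\tilde\Upsilon_{\bk_\lambda,k\beta})=\texttt{ker}(\tilde\Upsilon_{\bk_\lambda,(k-1)\beta})$ for all $k\ge 1$, where $\tilde\Upsilon_{\bk_\lambda,0}$ should be interpreted (via \eqref{eq: Upsilon} and the definition in \S4.8-style reasoning, see the identification $\texttt{im}(\Upsilon_{\lambda,2\beta})\cong\bk X_{\lambda+\beta}\cong\texttt{ker}(T_{\bk_\lambda,\beta})$ in the proof of Lemma~\ref{L4.1}) as $\tilde T_{\bk_\lambda,\beta}$ restricted appropriately. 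The key point is that $\texttt{ker}(\tilde\Upsilon_{\bk_\lambda,k\beta})$ fits into
\begin{equation}\label{eq: Upsilon ses}
0\to \texttt{ker}(\tilde\Upsilon_{\bk_\lambda,k\beta})\to H^0(G/K_{-\beta},\lambda+k\beta)\to \texttt{im}(\tilde\Upsilon_{\bk_\lambda,k\beta})\to 0,
\end{equation}
and $\texttt{im}(\tilde\Upsilon_{\bk_\lambda,k\beta})=\texttt{ker}(\tilde\Upsilon_{\bk_\lambda,(k-1)\beta})$. Iterating \eqref{eq: Upsilon ses} telescopes: in $K_0$,
\[
[\texttt{ker}(\tilde\Upsilon_{\bk_\lambda,(k-1)\beta})] = \sum_{j\ge k}(-1)^{j-k}[H^0(G/K_{-\beta},\lambda+j\beta)],
\]
the sum being finite because $H^0(G/K_{-\beta},\mu)=0$ once $\mu\notin X^+(T)$ (Theorem~\ref{lem8.3}(2)), which happens for $j$ large since the $\delta_s$-coordinate of $\lambda+j\beta$ grows. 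Combining with the identification of $\texttt{ker}(\tilde T_{\bk_\lambda,\beta})$ with one of these kernels and feeding back into \eqref{eq: coker ses} yields exactly \eqref{eq: Groth eq}.

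Concretely, the steps in order: (1) write \eqref{eq: coker ses} and reduce to computing $[\texttt{im}\,\tilde T_{\bk_\lambda,\beta}]=[\texttt{ker}\,\tilde T'_{\bk_\lambda,\beta}]$; (2) set up the chain of maps $\cdots\to H^0(G/K_{-\beta},\lambda+2\beta)\xrightarrow{\tilde\Upsilon} H^0(G/K_{-\beta},\lambda+\beta)\xrightarrow{\ } \texttt{im}(\tilde T_{\bk_\lambda,\beta})\subset H^0(G/K_\beta,\lambda)$, verifying via Lemma~\ref{L4.1}(2.a)(2.c) that consecutive image and kernel match up so that this is, up to the finitely many nonzero terms, an exact complex; (3) use Theorem~\ref{lem8.3}(2) (vanishing of $H^0$ outside $X^+(T)$) and the isotropy $(\beta,\beta)=0$ to see the chain terminates; (4) take the alternating sum of the short exact sequences \eqref{eq: Upsilon ses} to telescope and substitute back. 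The main obstacle I anticipate is step (2): carefully matching the very first link of the chain, i.e.\ identifying $\texttt{ker}(\tilde T'_{\bk_\lambda,\beta})$ (equivalently $\texttt{im}\,\tilde T_{\bk_\lambda,\beta}$, equivalently $\texttt{coker}\,\tilde T_{\bk_\lambda,\beta}$ up to the visible $H^0(G/K_\beta,\lambda)$ term) with $\texttt{ker}(\tilde\Upsilon_{\bk_\lambda,\beta})$, since $\tilde T$ goes between the two different Borels $K_{\pm\beta}$ while the $\tilde\Upsilon$'s all live on the $K_{-\beta}$ side; one must track the $\GL(1|1)$-level identities $\texttt{im}(T_{\bk_\lambda,\beta})=\bk Y_\lambda$, $\texttt{ker}(T'_{\bk_\lambda,\beta})=\bk Y_\lambda$, $\texttt{ker}(\Upsilon_{\lambda,2\beta})\cong\bk X_{\lambda+\beta}\cong\texttt{ker}(T_{\bk_\lambda,\beta})$ from the proof of Lemma~\ref{L4.1} and then apply the left-exact functor $\texttt{ind}^G_{P(\beta)}$, checking as in that proof that it actually preserves the relevant short exact sequences (not merely left-exactness). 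Everything else is formal manipulation in $K_0$.
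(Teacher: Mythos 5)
Your proposal follows essentially the same route as the paper's own proof of Corollary~\ref{cor: odd grod pre}: assemble the maps $\tilde T_{\bk_\lambda,\beta}$, $\tilde T'_{\bk_\lambda,\beta}$ and the chain $\tilde\Upsilon_{\bk_\lambda,k\beta}$ into the long exact sequence
\begin{align*}
\cdots \xrightarrow{\;\tilde\Upsilon_{3\beta}\;} H^0(G/K_{-\beta},\lambda+2\beta) \xrightarrow{\;\tilde\Upsilon_{2\beta}\;} H^0(G/K_{-\beta},\lambda+\beta) \xrightarrow{\;\tilde T_{\bk_\lambda,\beta}\;} H^0(G/K_\beta,\lambda) \longrightarrow \texttt{coker}\,\tilde T_{\bk_\lambda,\beta} \to 0,
\end{align*}
using exactly Lemma~\ref{L4.1}(2.a) and (2.c) for the exactness at each stage, and then pass to the alternating sum in the Grothendieck group. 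Your initial detour through $[\texttt{ker}\,\tilde T'_{\bk_\lambda,\beta}]$ is a dead end that you correctly abandon; once you switch to the $\tilde\Upsilon$-chain the argument is the paper's.

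The one place you and the paper genuinely part ways is the termination of the sum, and it is worth noting that neither account is quite right as written. Your step~(3) claims $\lambda+j\beta$ leaves $X^+(T)$ for $j$ large ``since the $\delta_s$-coordinate of $\lambda+j\beta$ grows.'' This is correct whenever $\beta=\delta_s-\epsilon_t$ with $s>1$ (the inequality $a_{s-1}\geq a_s+j$ eventually fails) or $t<n$ (the inequality $b_t-j\geq b_{t+1}$ eventually fails), but when $\beta=\delta_1-\epsilon_n$ raising $a_1$ and lowering $b_n$ never leaves $X^+(T)$, so $H^0(G/K_{-\beta},\lambda+j\beta)\ne 0$ for all $j$ and your termination argument breaks. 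The paper, by contrast, asserts that all $\lambda+k\beta$ stay in $X^+(T)$---which is false for most odd $\beta$---and never addresses how the resulting infinite alternating sum should be interpreted in $K_0$. In practice the corollary is used only to produce the character identity of Proposition~\ref{prop: Jan Fil Odd}, where the alternating series of Weyl characters $\sum_k(-1)^k\scrW(\lambda_{i-1}+k\beta_i)$ makes sense formally; but if you wish to state the result as an honest equality in the Grothendieck group, you should either restrict to $\beta$ for which the chain terminates or explicitly pass to a suitable completion. This is a gap shared with the paper rather than a flaw in your approach, but your step~(3) should not present termination as automatic.
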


\begin{proof} Note that all $\lambda+k\beta$ still satisfy $(\lambda+k\beta,\beta)\ne 0$ but $(\lambda+k\beta,\beta)\equiv0\mod p$, and consequently they all lie in $X^+(T)$ (see the forthcoming Lemma \ref{lem: typical} where more general notions related will be introduced).  According to  Lemma \ref{L4.1}(2),  there is a long exact sequence
\begin{align}\label{eq: long exact seq}
\cdots {\overset{\tilde \Upsilon_{\bk_{\lambda, 4\beta}}}\longrightarrow} H^0(G\slash K_{-\beta}, \lambda+3\beta){\overset{\tilde \Upsilon_{\bk_{\lambda, 3\beta}}}\longrightarrow}
H^0(G\slash K_{-\beta}, \lambda+2\beta){\overset{\tilde \Upsilon_{\bk_{\lambda, 2\beta}}}\longrightarrow}\cr
 H^0(G\slash K_{-\beta}, \lambda+\beta){\overset{\tilde T_{\bk_{\lambda},\beta}}\longrightarrow}
H^0(G\slash K_{\beta}, \lambda){\overset{\tilde T'_{\bk_\lambda,\beta}}\longrightarrow}
\texttt{im}\tilde T'_{\bk_\lambda,\beta}
\cong \texttt{coker}\tilde T_{\bk_\lambda,\beta}\longrightarrow 0.
\end{align}
The formula  (\ref{eq: Groth eq}) follows.
\end{proof}

\subsection{Characters}\label{sec: character} For any finite-dimensional $T$-module $M$, $M$ can be decomposed into a sum of weight spaces as $M=\sum_{\mu\in X(T)}M_\mu$.
As usual, we adopt the formal character of $M$, that is, $\texttt{ch}(M)=\sum_{\mu\in X(T)}\texttt{dim}(M_\mu) e^\mu$ in $\bbz[X(T)]$ where $\{e^\mu\}$ with $\mu$ running through $X(T)$ are the standard basis of the group ring $\bbz[X(T)]$ over $\bbz$, satisfying $e^{\mu_1}e^{\mu_2}=e^{\mu_1+\mu_2}$.   For example, one has for $\lambda\in X^+(T)$,
$$\texttt{ch}(H^0(\lambda))={A(\lambda+\rho_\bz)\over A(\rho_\bz)}\Xi$$
 where $A(\mu):=\sum_{w\in W}(-1)^{l(w)}e^{w(\mu)}$ and $\Xi:=\prod_{\beta\in \Phi_{\bo}^+}(1+e^{-\beta})$ (see for example, \cite[Corollary 5.8]{Shi}). For finite-dimensional $G$-modules $M_i$ $(i=1,2)$ and $N$ one has
 $$\texttt{ch}(M_1\otimes M_2)=\texttt{ch}(M_1) \texttt{ch}(M_2)$$
 and has further
 $$\texttt{ch}(N)=\texttt{ch}(M_1)+\texttt{ch}(M_2)$$
 if there exists a short exact sequence of $G$-modules: $M_1\hookrightarrow N\twoheadrightarrow M_2$.

\section{Typical weights and Steinberg's tensor product theorem}

\subsection{Typical weights}
\begin{defn}\label{def: typical} A dominant weight $\lambda\in X^+(T):=\{\mu=\sum_{i=1}^ma_i\delta_i+\sum_{j=1}^nb_j\epsilon_j\in X(T)\mid a_1\geq a_2\geq\cdots \geq a_m; b_1\geq b_2\geq\cdots \geq b_n\}$ is called  typical if $(\lambda+\rho, \beta_i)\ne 0$ for all $i=1,\ldots,mn$. Otherwise, a dominant weight $\mu\in X^+(T)$ which is not typical is called atypical.
\end{defn}
 By a straightforward computation, it is readily verified that
 \begin{align}\label{eq: typical eq}
 (\lambda+\rho,\beta_i)=(\lambda_{i-1},\beta_i).
 \end{align}
 Hence $\lambda\in X^+(T)$ is typical if and only if $(\lambda_{i-1},\beta_i)\ne 0$ for all $i=1,\ldots,n$. The following observation is due to \cite{PeSe} or \cite{Se}.
 \begin{lemma}\label{lem: typical} Suppose  $\lambda\in X^+(T)$  is a typical weight. Then all $\lambda_i$ lie in $X^+(T)$.
 \end{lemma}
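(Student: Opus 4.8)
The plan is to make the sequence $\lambda_0=\lambda,\lambda_1,\dots,\lambda_{mn}$ explicit and then reduce the statement to an elementary inequality check. Recall that $\lambda_i$ is the weight obtained from $\lambda_{i-1}$ by the odd reflection $\hat r_{\beta_i}$, so $\lambda_i=\lambda_{i-1}-\beta_i$ when $(\lambda_{i-1},\beta_i)\neq 0$ and $\lambda_i=\lambda_{i-1}$ otherwise. By (\ref{eq: typical eq}) one has $(\lambda_{i-1},\beta_i)=(\lambda+\rho,\beta_i)$, and by Definition \ref{def: typical} this is nonzero for every $i=1,\dots,mn$ precisely because $\lambda$ is typical; hence the first alternative always occurs and $\lambda_i=\lambda-(\beta_1+\cdots+\beta_i)$ for all $i$. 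Since any odd reflection removes an odd root from the positive system and adds its negative, it fixes the even positive root system $\Phi^+_\bz$; therefore the set $X^+(T)$ is the same for all Borel subgroups in the chain of Lemma \ref{lem: adjacent borels}, and it suffices to prove that for $\lambda\in X^+(T)$ every truncated partial sum $\lambda-\sum_{j=1}^i\beta_j$ again lies in $X^+(T)$.

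For this I would feed in the concise description (\ref{eq: concept exp}) together with the column-by-column ordering of the $\beta_j$. Writing $\lambda=\sum_{s=1}^m a_s\delta_s+\sum_{t=1}^n b_t\epsilon_t$ with $a_1\ge\cdots\ge a_m$ and $b_1\ge\cdots\ge b_n$, and splitting $i$ into the two cases $i=kn$ ($1\le k\le m$) and $i=kn+l$ ($0\le k<m$, $1\le l<n$) as in (\ref{eq: concept exp}), one computes $\sum_{j=1}^{kn+l}\beta_j=n\sum_{s=m-k+1}^{m}\delta_s+l\,\delta_{m-k}-k\sum_{t=1}^{n}\epsilon_t-\sum_{t=1}^{l}\epsilon_t$, so that the $\delta$-coordinates of $\lambda_i=\lambda-\sum_{j\le i}\beta_j$ are $(a_1,\dots,a_{m-k-1},\,a_{m-k}-l,\,a_{m-k+1}-n,\dots,a_m-n)$ and its $\epsilon$-coordinates are $(b_1+k+1,\dots,b_l+k+1,\,b_{l+1}+k,\dots,b_n+k)$ (the case $i=kn$ is this with $l=0$, and $\lambda_{mn}=\lambda-2\rho_\bo$). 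Membership in $X^+(T)$ just says that both tuples are weakly decreasing; all but two of the required inequalities are immediate from $\lambda\in X^+(T)$, and the remaining two, namely $a_{m-k}-l\ge a_{m-k+1}-n$ and $b_l+k+1\ge b_{l+1}+k$, follow from $a_{m-k}\ge a_{m-k+1}$ together with $l<n$, and from $b_l\ge b_{l+1}$, respectively. The boundary ranges $k=0$, $k=m-1$, and $n=1$ (in which some coordinate listed above is absent) are handled exactly as in the proof of Lemma \ref{lem: adjacent borels}.

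The only real subtlety I foresee is the first step: recognizing that typicality is precisely what rules out a degenerate odd reflection $\hat r_{\beta_i}$ in the chain, so that $\lambda_i$ is literally the full partial sum $\lambda-\sum_{j\le i}\beta_j$ and not a sum over a proper subset of $\{\beta_1,\dots,\beta_i\}$. If a reflection were skipped, dominance can genuinely break down; already for $\GL(3|2)$, taking $a_1=a_2=a_3$ and $\lambda=\epsilon_1+\epsilon_2$ (an atypical weight, degenerate at $\beta_2$), one finds $\lambda-\beta_1-\beta_3-\beta_4\notin X^+(T)$. After that reduction the rest is routine bookkeeping: each $\beta_j$ lowers one $\delta$-coordinate by $1$ and raises one $\epsilon$-coordinate by $1$, and within the prescribed order no dominance inequality is ever endangered except the two boundary ones checked above. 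One could equally run the argument one odd reflection at a time, carrying the explicit coordinate description of $\lambda_i$ as an induction hypothesis; note, however, that there is no clean one-step statement of the form ``$\mu\in X^+(T)$ and $(\mu,\beta_i)\neq 0$ imply $\mu-\beta_i\in X^+(T)$'' — it is the position within the chain that makes each individual step safe.
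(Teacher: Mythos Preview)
Your proof is correct, but it takes a quite different route from the paper's. The paper dispatches the lemma in two lines by a representation-theoretic observation: when $\lambda$ is typical, each $\lambda_i$ is precisely the ${B^{(i)}}^+$-highest weight of the simple module $L(\lambda)$ (this is exactly the content of your first paragraph, read through the odd-reflection rule for highest weights). Since every ${B^{(i)}}^+$ has the same purely-even part $B_\ev^+$, any ${B^{(i)}}^+$-highest weight is automatically $B_\ev^+$-dominant, i.e.\ lies in $X^+(T)$.

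Your approach is instead an explicit coordinate computation of $\lambda_i=\lambda-\sum_{j\le i}\beta_j$ followed by a direct check of the dominance inequalities. This is more elementary and in fact proves more than stated: once you grant the paper's unconditional definition $\lambda_i=\lambda_{i-1}-\beta_i$ (so your first paragraph is unnecessary here), your second paragraph shows $\lambda_i\in X^+(T)$ for \emph{every} $\lambda\in X^+(T)$, typical or not. Your own $\GL(3|2)$ example only breaks down because you skipped $\beta_2$, which the paper's definition does not allow. On the other hand, the paper's argument is conceptually cleaner, requires no bookkeeping, and would carry over verbatim to other basic classical types, whereas your coordinate check is tied to the specific combinatorics of $\GL(m|n)$.
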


 \begin{proof} When $\lambda\in X^+(T)$ is typical, $\lambda_i$ is ${B^{(i)}}^+$-dominant. Note that the purely-even subgroup of ${B^{(i)}}^+$ is $B_\ev^+$. Consequently, $\lambda_i$ lies in $X^+(T)$.
 \end{proof}

By the above lemma,  Theorem \ref{thm: Shi thm}(2) along with the classical result \cite[Proposition II.2.6]{Jan3} on induced modules of reductive algebraic groups ensure that all  $H^0(G\slash B^{(i-1)}, \lambda_i)$ are  nonzero.

\begin{remark}\label{rem: complex typical} When considering representations of $\GL(m|n)$ over $\bbc$, one has that a typical weight $\lambda\in X^+(T)$ gives rise to a Kac module realization of  typical irreducible modules $L(\lambda)$ where a Kac module is an induced module from an irreducible $G_\ev$-module (see \cite{CW},  \cite{DT}, \cite{Kac},  \cite{PeSe1}, etc.). In this paper, we will present  a modular version of this result in the concluding section. What is the most important is that typical weights will give rise to Jantzen filtration and consequently a sum formula of characters, which is the main purpose of the present paper.
\end{remark}

\subsection{Steinberg's tensor product theorem and the reduction from atypicals to typicals}
There is a natural question: how to deal with atypical weights in characteristic $p>0$? in contrast with the privilege of typical weights mentioned in Remark \ref{rem: complex typical}. Steinberg's tensor product theorem will tell us that it is enough to understand typical irreducible characters along with even irreducible characters (for purely-even subgroups).

Set
\begin{align*}
X_p^+(T):=\{\sum_{i=1}^ma_i\delta_i+\sum_{j=1}^n b_j\epsilon_j\in X^+(T)\mid &0\leq(a_{i}-a_{i+1}), (b_{j}-b_{j+1})\leq p-1,\cr
 &\forall i=1,\ldots,m-1; j=1,\ldots,n-1\}.
\end{align*}
For any atypical weight $\mu\in X^+(T)$, we can write it in $p$-adic expression
$$\mu=\mu_0+p\mu_1+\cdots+p^r\mu_r$$
 such that all $\mu_i\in X_p^+(T)$. Denote by $\varpi_i$ the $i$th fundamental weight of $\GL(m)$ ($i=1,\ldots,m-1$) which means  $\varpi_i\in X^+(T)$ satisfying  $(\varpi_i,\delta_k-\delta_{k+1})=\delta_{ik}$ for $k=1,\ldots,m-1$.
  There exists great enough positive number $l$ ($>r$) such that $\nu:=p^l\sum_{i=1}^{m-1}\varpi_i$ satisfies that $\lambda:=\mu+\nu$ is a typical weight. Obviously, $\varpi:=\sum_{i=1}^{m-1}\varpi_i\in X_p^+(T)$.

  Owing to  Kujawa's work \cite{Ku}, we have the following Steinberg's tensor product theorem for $\GL(m|n)$.
 \begin{theorem}\label{thm: st ten prod} Keep the notation as above. There is an isomorphism of $G$-modules
 $$L(\lambda)\cong L(\mu)\otimes L_\ev(\varpi)^{[l]}$$
 where $L_\ev(\varpi)^{[l]}$ stands for  the $l$th Frobenius twist of $L_\ev(\varpi)$ (see \cite{Ku} for more details), and $L_\ev(\varpi)$ stands for the irreducible module of $G_\ev$ with highest weight $\varpi$.
 \end{theorem}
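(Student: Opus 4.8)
The plan is to deduce the statement from Kujawa's Steinberg tensor product theorem for $\GL(m|n)$ \cite{Ku} by a bookkeeping of $p$-adic digits. Recall from \S\ref{sec: conv}(13) that for each $k\geq 1$ the Frobenius morphism $\sfF^k\colon G\to G_\ev$ lets one pull back any $G_\ev$-module $N$ to a $G$-module $N^{[k]}$, and that $\sfF^k$ followed by the ordinary $k'$-th Frobenius endomorphism of the reductive group $G_\ev$ equals the Frobenius morphism $\sfF^{k+k'}$, so that these twists compose in the obvious way. Kujawa's theorem, in its flexible form, provides for any $\eta\in X_p^+(T)$ and any $\zeta\in X^+(T)$ an isomorphism of $G$-modules
\begin{align}\label{eq: kujawa two factor}
L(\eta+p\zeta)\cong L(\eta)\otimes L_\ev(\zeta)^{[1]}.
\end{align}
Feeding the classical Steinberg tensor product theorem for $G_\ev$ (\cite[\S{II.3}]{Jan3}) into the right-hand factor and iterating, one obtains for any $\theta\in X^+(T)$ written as $\theta=\sum_{i=0}^{s}p^i\theta_i$ with all $\theta_i\in X_p^+(T)$ the factorization
\begin{align}\label{eq: kujawa full}
L(\theta)\cong L(\theta_0)\otimes\bigotimes_{i=1}^{s}L_\ev(\theta_i)^{[i]}.
\end{align}

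First I would apply \eqref{eq: kujawa full} to $\theta=\mu$ with the given expansion $\mu=\mu_0+p\mu_1+\cdots+p^r\mu_r$, getting $L(\mu)\cong L(\mu_0)\otimes\bigotimes_{i=1}^{r}L_\ev(\mu_i)^{[i]}$. Then I would observe that, because $l>r$, the weight $\lambda=\mu+\nu=\mu_0+p\mu_1+\cdots+p^r\mu_r+p^l\varpi$ is already presented in the shape required by \eqref{eq: kujawa full}: take $\theta_i=\mu_i$ for $0\leq i\leq r$, take $\theta_i=0$ for $r<i<l$, and take $\theta_l=\varpi$; each of these digits lies in $X_p^+(T)$, namely the $\mu_i$ by the hypothesis on the $p$-adic expansion of $\mu$, the zero weight trivially, and $\varpi=\sum_{i=1}^{m-1}\varpi_i\in X_p^+(T)$ as noted in the setup. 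Applying \eqref{eq: kujawa full} to $\theta=\lambda$ and using $L_\ev(0)\cong\tbk$, the factors indexed by $r+1,\dots,l-1$ are trivial and drop out, so that
\begin{align*}
L(\lambda)\cong L(\mu_0)\otimes\Bigl(\bigotimes_{i=1}^{r}L_\ev(\mu_i)^{[i]}\Bigr)\otimes L_\ev(\varpi)^{[l]}\cong L(\mu)\otimes L_\ev(\varpi)^{[l]},
\end{align*}
which is precisely the asserted isomorphism.

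The genuinely hard input is Kujawa's theorem itself, which I would treat as a black box and not reprove; everything after that is formal. The points needing a little care on our side are: that $\lambda=\mu+\nu$ is indeed a digit-by-digit decomposition into elements of $X_p^+(T)$ with no ``carrying'' contaminating the lower-order digits, which is exactly what the hypotheses $l>r$ and $\varpi\in X_p^+(T)$ buy us; that the flexible form \eqref{eq: kujawa two factor}, with $\zeta$ an arbitrary $G_\ev$-dominant weight rather than only a canonical restricted digit, is legitimate, which follows from \cite{Ku} just as the flexible version of Steinberg's theorem follows from its canonical form in the reductive case (the ambiguity in the ``restricted part'' amounts to a one-dimensional twist, and both sides of \eqref{eq: kujawa two factor} change compatibly under it); and the evident compatibility of iterated Frobenius twists used in passing from \eqref{eq: kujawa two factor} to \eqref{eq: kujawa full}. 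I do not expect any of these to be a real obstacle.
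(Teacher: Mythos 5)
The paper does not actually prove Theorem~\ref{thm: st ten prod}: it simply states the isomorphism as a consequence of Kujawa's work and cites~\cite{Ku}, leaving the derivation from Kujawa's two-factor Steinberg theorem to the reader. Your proposal correctly supplies that derivation. The key bookkeeping steps are all sound: $\varpi=\sum_{i=1}^{m-1}\varpi_i$ lies in $X_p^+(T)$ because each consecutive difference of its $\delta$-coordinates equals $1\leq p-1$ and its $\epsilon$-coordinates vanish; since $l>r$, the weight $\lambda=\mu+p^l\varpi$ has the digit decomposition $(\mu_0,\dots,\mu_r,0,\dots,0,\varpi)$ with every digit in $X_p^+(T)$, so no carrying disturbs the lower-order digits; the iterated factorization~\eqref{eq: kujawa full} follows from a single application of Kujawa's result to strip off the restricted digit $\theta_0$, after which the classical Steinberg theorem for $G_\ev$ handles $L_\ev(\theta_1+p\theta_2+\cdots)$, using that the composite $\sfF^1_{G_\ev}\circ\sfF^1_G=\sfF^2_G$ (raising matrix entries to the $p^2$-th power) identifies the nested twists; and comparing the resulting factorizations of $L(\lambda)$ and $L(\mu)$ cancels the shared factors and leaves exactly $L_\ev(\varpi)^{[l]}$. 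Since there is no proof in the paper to compare against, the main value of your writeup is to make explicit the $p$-adic bookkeeping that the authors leave implicit, and you have done so correctly.
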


By \S\ref{sec: character}, this theorem ensures that the question of formulating  irreducible characters of atypical weights can be reduced to the study of irreducible characters of typical weights.

\section{Totally-odd induced modules and related homomorphisms}\label{S6}

Recall that we already have  a series of Borel subgroups $B^{(i)}$ ($1\leq i \leq mn$) in \S\ref{2.2.2}. We now introduce the totally-odd induced modules which have been mentioned in \S\ref{sec: totally early}.
\subsection{}
Keep in mind the notations and assumptions in \S \ref{2.2.1}-\S \ref{2.2.3} and \S\ref{S4.2}.  We have that the purely-even parts of $B^{(i)}, i=0,1, \ldots, mn$ are the same one $B_\ev$. For $1\leq i \leq mn$, let  $P(\beta_{i})$ is the minimal parabolic subgroups containing  $B^{(i-1)}$ and $B^{(i)}$.
 By induction on $i$, $i=1,\ldots,mn$ starting with  $\lambda_0:=\lambda$, set $\lambda_i=\lambda_{i-1}-\beta_i$.

  By the conventional notations in \S\ref{S4.2},
 the totally-odd induced modules are naturally introduced as below
 $$\total:=H^0(G\slash B^{(mn)}, \lambda_{mn}). $$

  From now on, we always suppose $\lambda$ is a given typical weight in $X^+(T)$.

\subsection{}\label{6.6}
On the other hand,  for $\mu\in X^+(T)$, inductively set for $i=1,\ldots, mn$  with $\mu^{(0)}:=\mu$,
 \begin{align}\label{eq: sequence from lambda}
 \mu^{(i)}=\begin{cases}\mu^{(i-1)}, \text{ if }(\lambda_i, \beta_i)\equiv 0 \mod p;\cr
 \mu^{(i-1)}-\beta_i, \text{ if }(\lambda_i, \beta_i)\not\equiv0\mod p.
 \end{cases}
  \end{align}

 Recall that irreducible $G$-modules coincides with finite-dimensional  irreducible integrable $\texttt{Dist}(G)$-modules (see \cite{BKu}). By \cite[Theorems 4.3 and 4.5]{BKu}, the terminal one $\mu^{(mn)}\in X^+(T)$ with $L_{B^{(mn)}}(\mu^{(mn)})\cong L_{B^{(0)}}(\mu^{(0)})$. This $L_{B^{(0)}}(\mu)$ is exactly $L(\mu)$ which is by definition the simple socle of $H^0(\mu)$.

More generally,  by Lemma \ref{L4.1} $$H^0(G/B^{(i-1)},{\mu})\cong H^0(G/B^{(i)},{\mu-\beta_i}) \text{ and } L_{B^{(i-1)}}({\mu})\cong L_{B^{(i)}}({\mu-\beta_i})$$
whenever  $(\mu,\beta_i)\not\equiv0\mod p$.
So we have
\begin{lemma} (cf. \cite[Lemma 4.2]{BKu})\label{lem: sequence of irr odd ref}
 Suppose $\mu\in X^+(T)$ is given.
  For $1\leq i \leq mn$,
\begin{equation}\label{ff7.2}
  L_{B^{(i-1)}}(\mu)\cong\begin{cases}
      L_{B^{(i)}}(\mu), & (\mu,\beta_i)\equiv0 \mod p,\\
     L_{B^{(i)}}(\mu-\beta_i),  & (\mu,\beta_i)\not\equiv0 \mod p.    \end{cases}
\end{equation}
\end{lemma}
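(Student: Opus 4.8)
The plan is to obtain this statement as a direct consequence of Lemma \ref{L4.1}, applied to the pair of odd-adjacent Borel subgroups $B^{(i-1)}$ and $B^{(i)}$ relative to the isotropic odd root $\beta_i$.

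First I would translate the setup into the notation of \S\ref{sec: mini para}. By the construction in \S\ref{2.2.2} (see Lemma \ref{lem: adjacent borels}), the simple root system $\Pi_{\beta_i}$ of ${B^{(i)}}^+$ is obtained from the simple root system $\Pi_{\beta_{i-1}}$ of ${B^{(i-1)}}^+$ by the odd reflection $\hat r_{\beta_i}$, with $\beta_i\in\Pi_{\beta_{i-1}}$ and $-\beta_i\in\Pi_{\beta_i}$. Identifying the source of this odd reflection with $K^+_\beta$ and its image with $K^+_{-\beta}$ in \S\ref{sec: mini para}, we take $\beta:=\beta_i$, $K^+_\beta:={B^{(i-1)}}^+$ and $K^+_{-\beta}:={B^{(i)}}^+$. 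Passing to the opposite Borels then gives $K_{-\beta}=B^{(i-1)}$ and $K_\beta=B^{(i)}$, so that under the socle conventions of \S\ref{S4.2} and \S\ref{4.8s} one has $L_{K_{-\beta}}(-)=L_{B^{(i-1)}}(-)$ and $L_{K_\beta}(-)=L_{B^{(i)}}(-)$.

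Next I would split into the two cases. If $(\mu,\beta_i)\equiv 0\mod p$, then Lemma \ref{L4.1}(2.b) with $\lambda=\mu$ gives at once $L_{K_{-\beta}}(\mu)\cong L_{K_\beta}(\mu)$, which is the first case. If $(\mu,\beta_i)\not\equiv 0\mod p$, then since $\beta_i$ is isotropic we have $(\mu-\beta_i,\beta_i)=(\mu,\beta_i)\not\equiv 0\mod p$, so Lemma \ref{L4.1}(1) with $\lambda=\mu-\beta_i$ and $\beta=\beta_i$ yields $L_{K_{-\beta}}(\lambda+\beta)\cong L_{K_\beta}(\lambda)$, i.e. $L_{B^{(i-1)}}(\mu)\cong L_{B^{(i)}}(\mu-\beta_i)$ (equivalently, $\widetilde T_{\bk_{\mu-\beta_i},\beta_i}$ of \S\ref{4.9} is then an isomorphism of $H^0(G/B^{(i-1)},\mu)$ onto $H^0(G/B^{(i)},\mu-\beta_i)$, and isomorphic induced modules have isomorphic socles). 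Combining the two cases completes the proof.

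The only delicate point, I expect, is the bookkeeping of the various Borel conventions: deciding which of $B^{(i-1)},B^{(i)}$ plays the role of $K_\beta$ and which of $K_{-\beta}$, and hence on which side the shift by $\beta_i$ must appear. It comes out as stated precisely because in Lemma \ref{L4.1} the homomorphism $\widetilde T_{\bk_\lambda,\beta}$ has source $H^0(G/K_{-\beta},\lambda+\beta)$ and target $H^0(G/K_\beta,\lambda)$, so once $K_{-\beta}=B^{(i-1)}$ the source weight $\lambda+\beta$ is exactly the prescribed $\mu$; no computation beyond Lemma \ref{L4.1} is needed. (This is the algebraic-supergroup analogue of \cite[Lemma 4.2]{BKu}, to which one could also appeal directly through the distribution algebra $\texttt{Dist}(G)$.)
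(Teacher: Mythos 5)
Your proposal is correct and takes essentially the same route as the paper: both cases are deduced directly from Lemma \ref{L4.1}, using part (1) for the non-divisible case (where the isotropy of $\beta_i$ makes $(\mu-\beta_i,\beta_i)=(\mu,\beta_i)$) and part (2.b) for the divisible case, after identifying $B^{(i-1)}=K_{-\beta_i}$ and $B^{(i)}=K_{\beta_i}$. The paper states this more tersely in \S\ref{6.6} without spelling out the Borel bookkeeping, but the underlying argument is identical.
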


\begin{remark}\label{rem: typical}

(1) By the above lemma, there exists weight $\tilde\mu\in X^+(T)$  such that \begin{equation}\label{eq: tilde iso}L(\mu)\cong L_{B^{(mn)}}(\tilde\mu).\end{equation}
Furthermore, this $\tilde\mu$ is exactly $\mu^{(mn)}$ (see  \cite[Theorem 4.3]{BKu}). By \cite[Theorem 4.5]{BKu},
\begin{align}\label{eq: tilde star iso}
L(\mu)^*\cong L(-w_\bz\tilde\mu).
\end{align}

(2)  A weight $\lambda\in X^+(T)$ will be called $p$-typical if $(\lambda_{i-1},\beta_i)\not\equiv 0 \mod p$ for all  $i\in\{1,\ldots, mn\}$.  Note that by (\ref{eq: typical eq}), $(\lambda_{i-1},\beta_i)\not\equiv 0 \mod p$ if and only if $(\lambda+\rho,\beta_i)\not\equiv 0 \mod p$. So the notion of $p$-typical weights here is  identical to the one of ``typical weights" introduced in \cite{ZS} and \cite[\S12]{Z1}. By Lemma \ref{L4.1} $H^0({\lambda})$ and $H^0(G/B^{(mn)},{\lambda_{mn}})$ are isomorphic if and only if $\lambda$ is $p$-typical.     {In this case $H^0(\lambda)$ is irreducible if additionally $\lambda$ lies in the fundamental alcove (see \cite[Proposition 12.10]{Z1}, \cite[Theorem 1]{Ma}), etc..}

\end{remark}

As a consequence, we have
\begin{corollary}\label{coro: total socle} For $\lambda\in X^+(T)$, the following statements hold.
\begin{itemize}
\item[(1)]  There exists $\mu\in X^+(T)$ such that $L_{B^{(mn)}}(\lambda_{mn})\cong L(\mu)$, equivalent to say,  $\lambda_{mn}=\tilde \mu$.
 \item[(2)] Set  $\gamma=-w_{\bar 0}\lambda+2\rho_{\bar 1}$. Then $L(\gamma)^*\cong L_{B^{(mn)}}(\lambda_{mn})$.
\end{itemize}
\end{corollary}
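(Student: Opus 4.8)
The plan is to track the socle of the chain of induced modules $H^0(G/B^{(i)}, \lambda_i)$ as $i$ runs from $0$ to $mn$, using Lemma \ref{lem: sequence of irr odd ref} (equivalently the result of \cite{BKu}) at each step. For part (1), I would first note that $\lambda\in X^+(T)$, so $H^0(\lambda)=H^0(G/B^{(0)},\lambda_0)$ has simple socle $L(\lambda)$ by Theorem \ref{lem8.3}(3). Now apply Lemma \ref{lem: sequence of irr odd ref} repeatedly with $\mu$ replaced successively by $\lambda_{i-1}$: since $\lambda_i=\lambda_{i-1}-\beta_i$ for all $i$ in our construction of $H_{\textsf{total}}^0$, at each step the socle of $H^0(G/B^{(i-1)},\lambda_{i-1})$ is carried to the socle of $H^0(G/B^{(i)},\lambda_i)$ — in the case $(\lambda_{i-1},\beta_i)\not\equiv 0$ directly by \eqref{ff7.2}, and in the case $(\lambda_{i-1},\beta_i)\equiv 0$ one must check that subtracting $\beta_i$ still relates the two socles; here I would invoke Lemma \ref{L4.1}(2.b) together with the identification of the minimal parabolic picture, so that $L_{B^{(i-1)}}(\lambda_{i-1})\cong L_{B^{(i)}}(\lambda_{i-1})\cong L_{B^{(i)}}(\lambda_{i-1}-\beta_i)=L_{B^{(i)}}(\lambda_i)$, the last isomorphism because in this degenerate case $H^0_{\beta_i}(\lambda_{i-1})$ and $H^0_{-\beta_i}(\lambda_{i-1})$ have isomorphic socles (the $\GL(1|1)$ computation of Lemma \ref{lem: basic computation sec 4}(2.2) shows $L_{B_\beta}(\lambda)\cong L_{B_\beta}(\lambda+\beta)$ is false in general, so this step needs care — see below). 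Granting this, after $mn$ steps we obtain that $L_{B^{(mn)}}(\lambda_{mn})$ is simple and isomorphic to $L_{B^{(0)}}$ of some weight, hence equals $L(\mu)$ for the unique $\mu\in X^+(T)$ with this property; by Remark \ref{rem: typical}(1) this $\mu$ satisfies $\lambda_{mn}=\tilde\mu$, which is exactly the assertion.

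For part (2), I would combine part (1) with the known description of the head of $V(\lambda)$. By axiom (W2) in \S\ref{sec: totally early}, the head of $V(\lambda)$ is $L(-w_{\bar 0}\lambda+2\rho_{\bar 1})^*=L(\gamma)^*$ with $\gamma=-w_{\bar0}\lambda+2\rho_{\bar1}$. On the other hand the head of $V(\lambda)$ should, by the discussion in \S\ref{S8.2} summarized in the introduction, coincide with the socle of $H_{\textsf{total}}^0(\lambda)=H^0(G/B^{(mn)},\lambda_{mn})$, which by part (1) is $L_{B^{(mn)}}(\lambda_{mn})$. So it remains to match $L(\gamma)^*$ with $L_{B^{(mn)}}(\lambda_{mn})$ directly. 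I would do this via the identity \eqref{eq: tilde star iso}: $L(\mu)^*\cong L(-w_{\bar0}\tilde\mu)$, applied with $\mu$ as in part (1) and $\tilde\mu=\lambda_{mn}$. Thus $L(\gamma)^*$ equals $L_{B^{(mn)}}(\lambda_{mn})$ provided $\gamma=\mu$, i.e. provided $-w_{\bar0}\lambda+2\rho_{\bar1}$ is the highest weight (with respect to $B^{(0)+}$) of the simple module whose $B^{(mn)+}$-highest weight is $\lambda_{mn}=\lambda-\sum_{i=1}^{mn}\beta_i=\lambda-2\rho_{\bar1}$. So really the content is the combinatorial identity relating $\lambda-2\rho_{\bar1}$, viewed from the Borel $B^{(mn)}$, to $-w_{\bar0}\lambda+2\rho_{\bar1}$, viewed from $B^{(0)}$; this is a finite check using Lemma \ref{lem: adjacent borels} and Lemma \ref{lem: longest Weyl ele}, essentially chasing the reduced expression $\widehat w_\ell=w_{\bar0}w_{\bar1}$ through the weight $\lambda-2\rho_{\bar1}$.

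The main obstacle I anticipate is the $(\lambda_{i-1},\beta_i)\equiv 0\mod p$ case in the inductive step of part (1): there Lemma \ref{L4.1}(2.b) only tells me $L_{B^{(i-1)}}(\lambda_{i-1})\cong L_{B^{(i)}}(\lambda_{i-1})$ (same weight, opposite odd Borels), whereas I need to arrive at the weight $\lambda_i=\lambda_{i-1}-\beta_i$. Whether $L_{B^{(i)}}(\lambda_{i-1})\cong L_{B^{(i)}}(\lambda_{i-1}-\beta_i)$ requires the specific structure of the $\GL(1|1)$ reduction — it holds precisely because, when $(\lambda_{i-1},\beta_i)\equiv0$, the $\GL(1|1)$-module $H^0_{-\beta}$ of weight $\lambda_{i-1}$ and of weight $\lambda_{i-1}-\beta_i$ share the same irreducible socle (as one reads off from Lemma \ref{lem: basic computation sec 4}(2.2) after the appropriate shift), and this pushes up through $\texttt{ind}^G_{P(\beta_i)}$. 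I would need to state this compatibility carefully, perhaps as a small separate observation, since it is exactly the point where the naive "just subtract $\beta_i$" reasoning could fail. Everything else — the finite-dimensionality and simplicity of socles, exactness of $\texttt{res}$, left-exactness of $\texttt{ind}$, and the duality \eqref{eq: tilde star iso} — is already available from the cited results.
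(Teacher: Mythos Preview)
Your approach to part (1) has a genuine gap, and the obstacle you identify is fatal rather than merely technical. When $(\lambda_{i-1},\beta_i)\equiv 0\pmod p$, the isomorphism $L_{B^{(i)}}(\lambda_{i-1})\cong L_{B^{(i)}}(\lambda_{i-1}-\beta_i)$ that you need is simply \emph{false}: these two irreducibles have distinct ${B^{(i)}}^+$-highest weights $\lambda_{i-1}$ and $\lambda_{i-1}-\beta_i$, hence cannot be isomorphic. In the $\GL(1|1)$ Levi this is visible directly from Lemma~\ref{lem: basic computation sec 4}(2.2): the socles are the one-dimensional modules $\tbk X_{\lambda_{i-1}}$ and $\tbk X_{\lambda_{i-1}-\beta_i}$, which carry different $T$-weights. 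So the chain-tracking argument cannot land at $\lambda_{mn}$; it lands instead at $\lambda^{(mn)}$ in the sense of \eqref{eq: sequence from lambda}, which in general differs from $\lambda_{mn}$. The paper avoids all of this: part (1) is immediate because $L_{B^{(mn)}}(\lambda_{mn})$ is an irreducible $G$-module and, by Theorem~\ref{lem8.3}(4), every irreducible is $L(\mu)$ for a unique $\mu\in X^+(T)$; that $\lambda_{mn}=\tilde\mu$ is then just the definition from Remark~\ref{rem: typical}(1).

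For part (2), your first route via (W2) and \S\ref{S8.2} is circular: Lemma~\ref{lem8.2} is a consequence of this very corollary combined with Lemma~\ref{lem9.1}, not an input to it. Your second route has the right ingredient (the duality \eqref{eq: tilde star iso}) but you misidentify the required condition as $\gamma=\mu$ and then propose a combinatorial chase through $\widehat w_\ell$. The paper's argument is a two-line computation: since $w_{\bar 0}$ permutes $\Phi^+_{\bar 1}$ we have $w_{\bar 0}\rho_{\bar 1}=\rho_{\bar 1}$, hence
\[
\gamma=-w_{\bar 0}\lambda+2\rho_{\bar 1}=-w_{\bar 0}(\lambda-2\rho_{\bar 1})=-w_{\bar 0}\lambda_{mn}=-w_{\bar 0}\tilde\mu,
\]
and now \eqref{eq: tilde star iso} gives $L(\gamma)\cong L(\mu)^*$, so $L(\gamma)^*\cong L(\mu)\cong L_{B^{(mn)}}(\lambda_{mn})$ by part (1). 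No tracking of reduced expressions is needed.
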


\begin{proof} (1) It follows from (\ref{eq: tilde iso}).

(2) Note that $L(\gamma)=L(-w_{\bar 0}(\lambda-2\rho_{\bar 1}))\cong L(-w_{\bar 0}\lambda_{mn})$, which is isomorphic  to $L(\mu)^*$ by (\ref{eq: tilde star iso}).  The desired isomorphism follows from (1).
\end{proof}

\subsection{}\label{S6.5'}
 Keep the notations and assumptions in \S\ref{sec: conv-2}.
As in \S\ref{3.5}, we  take $C_{\lambda_{i}}, Y_{\lambda_{i}}$ as a  $\textbf{k}$-basis of $H_{\beta_{i}}^0(\lambda_{i})$, and  take  $B_{\lambda_{i-1}}, X_{\lambda_{i-1}}$ as a $\textbf{k}$-basis of $H_{-\beta_{i}}^0(\lambda_{i-1})$.

Similar to (\ref{eq4.1'}), (\ref{eq4.1''}) and \S\ref{4.9}, we have $P_A(\beta_{i})$-module homomorphisms $$T_{A_{\lambda_{i}},\beta_{i}}: \texttt{ind}_{B_{A,-\beta_{i}}}^{P_A(\beta_{i})}A_{\lambda_{i-1}}\rightarrow \texttt{ind}_{B_{A,\beta_{i}}}^{P_A(\beta_{i})}A_{\lambda_{i}};$$ $$T'_{A_{\lambda_{i}},\beta_{i}}: \texttt{ind}_{B_{A,\beta_{i}}}^{P_A(\beta_{i})}A_{\lambda_{i}}\rightarrow \texttt{ind}_{B_{A,-\beta_{i}}}^{P_A(\beta_{i})}A_{\lambda_{i-1}}.$$

And  $G_A$-module homomorphisms
 $$\widetilde T_{A_{\lambda_{i}},\beta_{i}}: H^0(G_A/B_A^{(i-1)},\lambda_{i-1}) \rightarrow H^0(G_A/B_A^{(i)},\lambda_{i});$$
 $$\widetilde T'_{A_{\lambda_{i}},\beta_i}: H^0(G_A/B_A^{(i)},\lambda_{i})\rightarrow  H^0(G_A/B_A^{(i-1)},\lambda_{i-1}).$$

\subsection{} Keep the notations in \S\ref{sec: character}. Now we adopt the Euler characteristic for each finite dimensional $B$-module $M$,
$$\chi(M):=\sum_{i\geq0} (-1)^i \texttt{ch}H^i(M).$$ Then we  have the following facts.

\begin{prop}\label{lem: totally ind ch} Suppose $\lambda\in X^+(T)$.
The following statements hold.
\begin{itemize}
\item[(1)] $\texttt{ch}(\total)=\texttt{ch}(H^0(\lambda-2\rho_\bo))\Xi_{mn}$, where $\Xi_{mn}:=\prod_{\beta\in \Phi^+_\bo}(1+e^\beta)$.
\item[(2)] Set $\chi(\lambda)=\chi(\tbk_\lambda)$, and $\chi_\bz(\lambda):=\chi_\bz(\tbk_\lambda)$. Then
    \begin{itemize}
    \item[{\tiny(2.a)}]    $\chi(\lambda)=\chi_\bz(\lambda)\Xi$.
    \item[{\tiny(2.b)}] $\chi(w. \lambda)=\texttt{det}(w)\chi(\lambda)$ for any $w\in W$ with determinant $\texttt{det}(w)$.
        \end{itemize}
\end{itemize}
\end{prop}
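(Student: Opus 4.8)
The plan is to treat the three assertions in turn, in each case reducing to the corresponding statement for the reductive group $G_\ev$ together with the structural decomposition supplied by Theorem~\ref{thm: Shi thm}(2). For (1), the starting point is the chain of isomorphisms of $T$-modules
$$
H^0(G/B^{(mn)},\lambda_{mn})\;\cong\; H^0_\ev(G_\ev/B_\ev^{(mn)},\lambda_{mn})\otimes{\bigwedge}^\bullet(\ggg/\bbb^{(mn),-})^*_\bo,
$$
which is the special case $B'=B^{(mn)}$ of \eqref{eq: gen coho decom}. Now $\lambda_{mn}=\lambda-2\rho_\bo$ since $\lambda_{mn}=\lambda-\sum_{i=1}^{mn}\beta_i=\lambda-2\rho_\bo$, and the purely-even part of $B^{(mn)}$ is again $B_\ev$, so $H^0_\ev(G_\ev/B_\ev^{(mn)},\lambda-2\rho_\bo)=H^0_\ev(\lambda-2\rho_\bo)$ (the flag variety and the line bundle do not change, only the notation does). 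It remains to identify the character of ${\bigwedge}^\bullet(\ggg/\bbb^{(mn),-})^*_\bo$. The odd roots of $\bbb^{(mn),-}=\Lie(B^{(mn)})$ are exactly the negatives of the odd roots of $\Phi^+_{\beta_{mn}}$; since $w_\bo$ sends $\Phi^+$ to its image with all odd roots negated (Lemma~\ref{lem: adjacent borels} and \S\ref{2.2.2}), the odd part of $\ggg/\bbb^{(mn),-}$ has weights $\{-\beta:\beta\in\Phi^+_\bo\}$, hence its dual has weights $\{\beta:\beta\in\Phi^+_\bo\}$ and $\texttt{ch}\,{\bigwedge}^\bullet(\ggg/\bbb^{(mn),-})^*_\bo=\prod_{\beta\in\Phi^+_\bo}(1+e^\beta)=\Xi_{mn}$. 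Multiplying gives (1).

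For (2.a), the argument is the exact analogue at the level of Euler characteristics: apply \eqref{eq: gen coho decom} with $B'=B$ (the standard Borel) and take alternating sums of characters of the cohomology groups. Because the finite-dimensional module ${\bigwedge}^\bullet(\ggg/\bbb^-)^*_\bo$ is a fixed $T$-module that factors out of every $H^i$, the generalized tensor identity \eqref{eq: gen ten id} (or directly the $T$-module splitting) yields $H^i(\tbk_\lambda)\cong H^i_\ev(\tbk_\lambda)\otimes{\bigwedge}^\bullet(\ggg/\bbb^-)^*_\bo$ for every $i$, whence $\chi(\lambda)=\chi_\bz(\lambda)\cdot\texttt{ch}\,{\bigwedge}^\bullet(\ggg/\bbb^-)^*_\bo$. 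The odd roots of $\bbb^-$ are $-\Phi^+_\bo$, so $\ggg/\bbb^-$ has odd weights $\Phi^+_\bo$ and its dual has odd weights $-\Phi^+_\bo$, giving $\texttt{ch}\,{\bigwedge}^\bullet(\ggg/\bbb^-)^*_\bo=\prod_{\beta\in\Phi^+_\bo}(1+e^{-\beta})=\Xi$. This is (2.a).

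For (2.b), the factor $\Xi$ in (2.a) is $W$-invariant: $W$ permutes $\Phi_\bz$ and stabilizes the set $\Phi^+_\bo=\{\delta_s-\epsilon_t\}$ as a set (the even Weyl group $W=\mathfrak S_m\times\mathfrak S_n$ acts on the $\delta$'s and $\epsilon$'s separately and hence permutes the odd roots), so $w\cdot\Xi=\Xi$ for all $w\in W$. Therefore it suffices to prove $\chi_\bz(w.\lambda)=\det(w)\chi_\bz(\lambda)$, which is the classical Jantzen–Bott identity for the reductive group $G_\ev$ (the dot action here being the one with respect to $\rho_\bz$), e.g.\ \cite[\S{II.5.9}]{Jan3}; combined with (2.a) this gives $\chi(w.\lambda)=\chi_\bz(w.\lambda)\Xi=\det(w)\chi_\bz(\lambda)\Xi=\det(w)\chi(\lambda)$. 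One small point to be careful about: one must check that the ``$\,.\,$'' (dot) action used in the statement of (2.b) is the $\rho_\bz$-shifted action, so that it matches the even dot action to which the classical identity applies; since $\rho=\rho_\bz-\rho_\bo$ and $\rho_\bo$ is $W$-fixed, the $\rho$-dot and $\rho_\bz$-dot actions of $W$ coincide, so there is no discrepancy.

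The main obstacle I anticipate is purely bookkeeping rather than conceptual: pinning down precisely which odd roots appear in $\ggg/\bbb^{(mn),-}$ (equivalently, being careful about the sign conventions relating $B^{(mn)}$, $w_\bo$, $B^-$, and the list $\{\beta_1,\dots,\beta_{mn}\}$ from \S\ref{2.2.1}) so that one lands on $\Xi_{mn}=\prod(1+e^{+\beta})$ in (1) versus $\Xi=\prod(1+e^{-\beta})$ in (2.a), and similarly getting the identification $\lambda_{mn}=\lambda-2\rho_\bo$ straight. Once those identifications are fixed, everything reduces to \eqref{eq: gen coho decom}, the known character formula for $H^0_\ev$, and the classical even Jantzen identity, with no new difficulty.
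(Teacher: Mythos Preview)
Your proposal is correct and follows essentially the same approach as the paper's proof: both parts are deduced directly from the $T$-module decomposition of Theorem~\ref{thm: Shi thm}(2), with the identification of the exterior-algebra factor as $\Xi_{mn}$ (for $B'=B^{(mn)}$) or $\Xi$ (for $B'=B$), and part (2.b) reduced to the classical identity $\chi_\bz(w.\lambda)=\det(w)\chi_\bz(\lambda)$ for $G_\ev$ after observing that $w(\rho_\bo)=\rho_\bo$ so the $\rho$-dot and $\rho_\bz$-dot actions of $W$ agree. Your treatment is considerably more detailed than the paper's (which dispatches (1) in a single sentence), and your explicit bookkeeping of the odd roots of $\bbb^{(mn),-}$ versus $\bbb^-$, and of the sign in $\Xi_{mn}$ versus $\Xi$, is exactly the kind of care the argument needs; note in passing that the displayed statement of (1) should read $H^0_\ev(\lambda-2\rho_\bo)$ rather than $H^0(\lambda-2\rho_\bo)$, consistent with (W4) in the introduction and with the argument you give.
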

\begin{proof}
(1) It follows from Theorem \ref{thm: Shi thm}(2).

(2) By Theorem \ref{thm: Shi thm}(2) again, for any $B$-module $M$ there is a $T$-module isomorphism
\begin{align*}
R^{i}\texttt{ind}^G_B(M)\cong R^{i}\texttt{ind}^{G_\ev}_{B_\ev}(M) \otimes \bwedge(\ggg\slash \bbb^-)^*_\bo.
\end{align*}
Note $\texttt{ch}( \bwedge(\ggg\slash \bbb^-)^*_\bo)=\Xi$.
The first part follows. As to the second one, note that $w(\rho_\bo)=\rho_\bo$.  It follows from the first part along with the result on  Euler characters of $G_\ev$-modules (see \cite[\S{II.5.9(1)}]{Jan3}).
\end{proof}

\subsection{Homomorphisms arising from odd reflections}\label{7.5}
According to \S \ref{S6.5'},  we have a sequence of homomorphisms
$$\CD
  H^0(G_A/B_A^{(0)},\lambda_{0}) @>\widetilde T_{A,\beta_{1}}>> H^0(G_A/B_A^{(1)},{\lambda_{1}}) @>\widetilde T_{A,\beta_{2}}>> \cdots
\endCD$$
$$\CD
  @>\widetilde T_{A,\beta_{mn}}>>H^0_{\texttt{total},A}(\lambda):= H^0(G_A/B_A^{(mn)},{\lambda_{mn}}).
\endCD$$
Denote the composite of these sequence of homomorphisms  by $\widetilde T_{A, {w}_{\bar 1}}$, i.e.  $$\widetilde T_{A,{w}_{\bar 1}}=\widetilde T_{A,\beta_{mn}}\circ \widetilde T_{A,\beta_{mn-1}}\circ\cdots\circ\widetilde T_{A,\beta_{1}}.$$
Similarly, consider  $\widetilde T'_{A,{w}_{\bar 1}}=\widetilde T'_{A,\beta_{1}}\circ \widetilde T'_{A,\beta_{2}}\circ\cdots\circ\widetilde T'_{A,\beta_{mn}}$.
Then we have
\begin{equation}\label{e6.5}\widetilde T_{A,{w}_{\bar 1}}:H^0(G_A/B_A^{(0)},\lambda_{0})\rightarrow H^0(G_A/B_A^{(mn)},{\lambda_{mn}}).\end{equation}
and
$$ \widetilde T'_{A,{w}_{\bar 1}}: H^0(G_A/B_A^{(mn)}, {\lambda_{mn}})\rightarrow H^0(G_A/B_A^{(0)}, {\lambda_{0}}).$$

Keep it in mind that $\bbk$ is the fractional field of $A$.
 Let $H_{\texttt{total},\bbk}^0(\lambda):=H^0_{\texttt{total},A}(\lambda)\otimes_A \bbk$.  Lemma \ref{lem: const composite for odd} yields the following result.
\begin{lemma}\label{lem: K-iso Odd}
 Suppose $\texttt{char}(\bbk)=0$, and $\lambda$ is typical. Then the natural extension of $T_{A,w_\bo}$
  \begin{equation}\label{eq: even comp K}\widetilde T_{K,w_{\bar1}}: H_\bbk^0(\lambda)  \rightarrow H^0_{\texttt{total},\bbk}(\lambda)
  \end{equation}
  is a $\bbk$-isomorphism.
  \end{lemma}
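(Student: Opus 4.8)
The plan is to unwind $\widetilde T_{\bbk,w_{\bar1}}$ of \eqref{eq: even comp K} as a composite of the elementary odd-reflection maps analyzed in Sections \ref{s3} and \ref{S6S}, and then to verify a single numerical non-vanishing. By the construction in \S\ref{7.5} (together with \S\ref{S6.5'}) the map is
$$\widetilde T_{\bbk,w_{\bar1}}=\widetilde T_{\bbk,\beta_{mn}}\circ\widetilde T_{\bbk,\beta_{mn-1}}\circ\cdots\circ\widetilde T_{\bbk,\beta_{1}},\qquad \widetilde T_{\bbk,\beta_i}:=\widetilde T_{A,\beta_i}\otimes_A\bbk,$$
with $\widetilde T_{A,\beta_i}\colon H^0(G_A/B_A^{(i-1)},\lambda_{i-1})\to H^0(G_A/B_A^{(i)},\lambda_i)$ obtained by applying $\texttt{ind}^{G_A}_{P_A(\beta_i)}$ to the $\GL(1|1)$-level homomorphism of \S\ref{3.5}. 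So it suffices to show each $\widetilde T_{\bbk,\beta_i}$ is a $\bbk$-isomorphism; the composite then is one, and the needed base-change identifications $H^0_\bbk(G/B^{(j)},\mu)\cong H^0_A(G/B^{(j)},\mu)\otimes_A\bbk$ are exactly those used in \S\ref{4.9}.

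For the individual steps I would quote \S\ref{4.9} directly: $\widetilde T_{\bbk,\beta_i}$ is a $\bbk$-isomorphism as soon as the relevant pairing $a+b=(\lambda_i,\beta_i)$ is nonzero in $\bbk$, its inverse being $(\lambda_i,\beta_i)^{-1}\widetilde T'_{\bbk,\beta_i}$ by \eqref{eq: scale aplusb} (this rests on Lemma \ref{lem: const composite for odd}, hence on the $\GL(1|1)$ computations of \S\ref{3.4}--\S\ref{3.5} transported via \eqref{eq: red GL11 more} and \eqref{eq6.0}). Everything thus reduces to $(\lambda_i,\beta_i)\ne0$ in $\bbk$ for every $i$. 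Since $\beta_i$ is isotropic, $(\lambda_i,\beta_i)=(\lambda_{i-1}-\beta_i,\beta_i)=(\lambda_{i-1},\beta_i)$, and by \eqref{eq: typical eq} this equals $(\lambda+\rho,\beta_i)$; typicality of $\lambda$ gives $(\lambda+\rho,\beta_i)\ne 0$ in $\bbz$, and $\texttt{char}(\bbk)=0$ keeps it nonzero in $\bbk$. Composing the $mn$ isomorphisms yields the claim. I would also note, via Lemma \ref{lem: typical}, that every $\lambda_i\in X^+(T)$, so the intermediate $H^0$'s are the expected nonzero modules; this is not logically needed but shows the statement is not vacuous.

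I do not anticipate a genuine obstacle: the substance has been front-loaded into Sections \ref{s3}--\ref{S6S}, and what is left is bookkeeping. The points to keep straight are that in matching the chain of \S\ref{S6.5'} to the generic adjacent-Borel setup of \S\ref{4.9} one has $B^{(i-1)}$ in the role of $K_{-\beta_i}$ and $B^{(i)}$ in that of $K_{\beta_i}$, with target weight $\lambda_i$; and that the number governing step $i$ is $(\lambda+\rho,\beta_i)$ rather than some shifted quantity, which \eqref{eq: typical eq} and the isotropy $(\beta_i,\beta_i)=0$ make immediate.
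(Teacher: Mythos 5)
Your proposal is correct and follows essentially the same route as the paper: the paper disposes of this lemma with a one-line citation to Lemma \ref{lem: const composite for odd}, which is exactly the step-by-step isomorphism criterion you invoke. You have simply spelled out the implicit bookkeeping — identifying $B^{(i-1)}$ with $K_{-\beta_i}$, $B^{(i)}$ with $K_{\beta_i}$, the scalar as $(\lambda_i,\beta_i)=(\lambda_{i-1},\beta_i)=(\lambda+\rho,\beta_i)$, and using typicality with $\operatorname{char}\bbk=0$ to conclude nonvanishing — which the paper leaves to the reader.
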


\subsection{} We turn to the case over $\tbk$.
 \subsubsection{} We first prove the following basic result. Recall that for an algebraic supergroup scheme and its module, one can talk about the fixed point space (cf. \cite[\S{I.2.10}]{Jan3} with change of any  commutative $\tbk$-algebra $A$ into any commutative $\tbk$-superalgebra $R$).     {One can talk about unipotent supergroup scheme  over $\tbk$. By a result of Masuoka, an affine group scheme $\textbf{U}$ is unipotent if and only if its purely-even subgroup scheme $\textbf{U}_\ev$ is unipotent (see \cite[Theorem 41]{Mas}, or \cite{ZU}).}
 Any irreducible module of a unipotent supergroup scheme  $\textbf{U}$ is one-dimensional and trivial (see \cite{Mas}, \cite{Z4}, etc.). Therefore, for a unipotent supergroup $\textbf{U}$,  its fixed-point space of a non-zero $\textbf{U}$-module is nonzero.

\begin{lemma}\label{lem: U fixed} Let ${U^{(mn)}}^+$ be the unipotent radical of ${B^{(mn)}}^+$. Then the ${U^{(mn)}}^+$-fixed point subspace $H^0(G\slash B^{(i-1)},\lambda_{i-1})^{{U^{(mn)}}^+}$  for any $i=1,\ldots,mn$ is  one-dimensional, which is exactly the weight space
$H^0(G\slash B^{(i-1)},\lambda_{i-1})_{\lambda_{mn}}$.
\end{lemma}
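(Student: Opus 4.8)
The plan is to study $V_i:=H^0(G\slash B^{(i-1)},\lambda_{i-1})$ as a $T$-module together with the action of the unipotent supergroup ${U^{(mn)}}^{+}$, isolating the single weight $\lambda_{mn}$. First I would record that $V_i\neq 0$: since $\lambda$ is typical, Lemma~\ref{lem: typical} gives $\lambda_{i-1}\in X^+(T)$, and then the $T$-module decomposition of Theorem~\ref{thm: Shi thm}(2) together with the non-vanishing of $H^0_\ev(\lambda_{i-1})$ for the reductive group $G_\ev$ shows $V_i\neq 0$. All the Borel subgroups $B^{(0)},\dots,B^{(mn)}$ share the purely-even part $B_\ev$, so ${U^{(mn)}}^{+}$ has purely-even subgroup an ordinary maximal unipotent subgroup of $G_\ev$; by the criterion of Masuoka recalled just before the statement, ${U^{(mn)}}^{+}$ is therefore a unipotent supergroup, and hence $V_i^{{U^{(mn)}}^{+}}\neq 0$. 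As $T$ normalises ${U^{(mn)}}^{+}$, this invariant space is a $T$-submodule, hence a sum of weight spaces; so it suffices to prove that (i) the weight space $(V_i)_{\lambda_{mn}}$ is one-dimensional, and (ii) no weight $\mu\neq\lambda_{mn}$ of $V_i$ supports a ${U^{(mn)}}^{+}$-fixed vector. Granting (i) and (ii), the non-vanishing forces $V_i^{{U^{(mn)}}^{+}}=(V_i)_{\lambda_{mn}}$.

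For (i) I would use the $T$-module isomorphism $V_i\cong H^0_\ev(\lambda_{i-1})\otimes\bwedge(\ggg\slash\Lie(B^{(i-1)}))^*_\bo$ of Theorem~\ref{thm: Shi thm}(2). The odd roots occurring in the second factor are exactly the negatives of the odd roots of ${B^{(i-1)}}^{+}$, so its $T$-weights are the sums $\sum_{l\in I}\beta_l-\sum_{l\in J}\beta_l$ with $I\subseteq\{1,\dots,i-1\}$ and $J\subseteq\{i,\dots,mn\}$, while the weights of $H^0_\ev(\lambda_{i-1})$ lie weakly below $\lambda_{i-1}$ in the even dominance order, with $\lambda_{i-1}$ occurring with multiplicity one. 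Since $\lambda_{mn}=\lambda_{i-1}-(\beta_i+\cdots+\beta_{mn})$, equating the weight of a pure tensor with $\lambda_{mn}$ and comparing total $\delta$-degree — a nonempty sum of distinct positive odd roots has strictly positive $\delta$-degree, whereas a sum of positive even roots has $\delta$-degree zero — forces $I=\emptyset$, $J=\{i,\dots,mn\}$, and the $H^0_\ev$-factor to be the highest weight vector; hence $\dim(V_i)_{\lambda_{mn}}=1$.

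Assertion (ii) is the crux. If $0\neq v\in(V_i)_\mu$ is fixed by ${U^{(mn)}}^{+}$, then $\tbk v$ is a one-dimensional ${B^{(mn)}}^{+}$-submodule of $V_i$, i.e.\ $\tbk_\mu\hookrightarrow\texttt{soc}_{{B^{(mn)}}^{+}}(V_i)$, and one must show $\mu=\lambda_{mn}$. The idea is to transport $v$ along the odd-reflection chain of \S\ref{7.5}: the composite $\widetilde T_{\tbk,\beta_{mn}}\circ\cdots\circ\widetilde T_{\tbk,\beta_i}\colon V_i\to H^0(G\slash B^{(mn)},\lambda_{mn})=\total$ of the homomorphisms from \S\ref{S6.5'} is $T$-equivariant, and for the totally-odd module $\total$ (nonzero, with highest weight $\lambda_{mn}$) the pair $\bigl(B^{(mn)},{B^{(mn)}}^{+}\bigr)$ is an opposite pair of Borels with common purely-even part, so the argument of Theorem~\ref{lem8.3}(3) applies equally to give $\total^{{U^{(mn)}}^{+}}=\total_{\lambda_{mn}}$, one-dimensional. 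A fixed vector of weight $\mu\neq\lambda_{mn}$ would map into $\total^{{U^{(mn)}}^{+}}\cap\total_\mu=0$ and so lie in the kernel of the composite, and one is reduced to checking that this kernel contains no ${U^{(mn)}}^{+}$-fixed vector of weight different from $\lambda_{mn}$. This last point is where the genuine difficulty lies: it requires a step-by-step description of the kernels of the individual maps $\widetilde T_{\tbk,\beta_j}$ via the $\GL(1|1)$-computations of \S\ref{s3} and Lemma~\ref{L4.1} — using typicality to control the congruences $(\lambda_{j-1},\beta_j)\neq 0$ — and a tracking of how the ``extra'' weight spaces (the $\lambda_{j-1}$-weight spaces, all different from $\lambda_{mn}$) are absorbed along the chain. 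The non-vanishing and (i) are formal consequences of tools already in place; controlling $\texttt{soc}_{{B^{(mn)}}^{+}}(V_i)$ — equivalently the kernel of the odd-reflection chain and its unipotent invariants — is the main obstacle.
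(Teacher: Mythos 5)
Your outline of step (i) — reducing via Theorem~\ref{thm: Shi thm}(2) to a weight computation in $H^0_\ev(\lambda_{i-1})\otimes\bwedge(\ggg/\Lie(B^{(i-1)}))^*_\bo$ and using a $\delta$-degree count — is correct and essentially matches the second half of the paper's proof. But you yourself flag that assertion (ii) is unproved, and this is a genuine gap. The transport idea along the chain $\widetilde T_{\tbk,\beta_j}$ runs into exactly the difficulty you name: typicality only guarantees $(\lambda_{j-1},\beta_j)\neq 0$ in $\bbz$, not $(\lambda_{j-1},\beta_j)\not\equiv 0\pmod p$, so by Lemma~\ref{L4.1}(2) some of the $\widetilde T_{\tbk,\beta_j}$ can have nontrivial kernels, and a ${U^{(mn)}}^+$-fixed vector of the wrong weight could in principle sit inside such a kernel. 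You would then have to rule that out step by step, which you admit you cannot do. So as written the argument does not establish the lemma.

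The paper avoids (ii) entirely by a different, more elementary key idea: a density argument that bounds $\dim\calk^{{U^{(mn)}}^+}$ directly, without knowing its weight in advance. Writing $\calk=H^0(G/B^{(i-1)},\lambda_{i-1})$ as an explicit space of functions $\phi\in\tbk[G]$ with $\phi(xb)=\lambda_{i-1}(b)^{-1}\phi(x)$, a ${U^{(mn)}}^+$-fixed $f$ satisfies $f(ub)=\lambda_{i-1}(b)^{-1}f(1)$ for $u\in {U^{(mn)}}^+(R)$, $b\in B^{(i-1)}(R)$. Since ${U^{(mn)}}^+$ and $B^{(i-1)}$ both have purely-even part $U^+_\ev$ and $B_\ev$ respectively, restriction to purely-even points shows $f$ is determined on $U_\ev^+B_\ev$ by the single value $f(1)$. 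As the topological space of $G$ is that of $G_\ev$ and $U_\ev^+B_\ev$ is the open dense big cell, $f$ is determined globally by $f(1)$, hence $\dim\calk^{{U^{(mn)}}^+}\le 1$; combined with unipotence this gives exactly $1$. The identification with $\calk_{\lambda_{mn}}$ then drops out because any ${B^{(mn)}}^+$-maximal weight space lies inside $\calk^{{U^{(mn)}}^+}$ and your step (i) shows $\lambda_{mn}$ is such a weight of multiplicity one. So the missing idea in your proposal is precisely this big-cell/density argument, which replaces your unfinished step (ii) and makes the kernel analysis of the $\widetilde T_{\tbk,\beta_j}$ unnecessary.
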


\begin{proof} Keep it in mind that the purely-even subgroup of ${B^{(mn)}}^+$ is $B_\ev^+$ which has unipotent radical $U_\ev^+$, and for any nonzero ${U^{(mn)}}^+$-module  $M$, the fixed-point space $M^{{U^{(mn)}}^+}$ must be nonzero. For simplicity of notations, we set $\calk=H^0(G\slash B^{(i-1)},\lambda_{i-1})$ which can be described  as
\begin{align*}
\calk=\{\phi\in \tbk[G]\mid &\phi(xb)=\lambda_{i-1}(b)^{-1}\phi(x)\text{ for any }x\in G(R), b\in B^{(i-1)}(R)\cr
 & \text{ and for any } R\in \sak\}.
 \end{align*}
The action of $G(R)$ is given by left translation.
For any $f\in \calk^{{U^{(mn)}}^+}$, it satisfies $f(ub)=b^{-1}f(1)$ for all $u\in {U^{(mn)}}^+(R)$ with $f(1)\in \tbk_{\lambda_{i-1}}$. Note that ${U^{(mn)}}^+$  and $B^{(i-1)}$  have purely-even subgroups identical to $U_\ev^+$ and $B_\ev$, respectively.
Hence, $f|_{U_\ev^+B_\ev}$ is determined by $f(1)$.
Recall that as a topological space, open subsets of $G$ are just the ones of $G_\ev$ while $U_\ev^+B_\ev$ is open dense (see \cite[\S8.3]{Sp} or \cite[\S{II.1.9}]{Jan3}). Then we can conclude that $f(1)$ determines  $f$. Correspondingly, $\calk^{{U^{(mn)}}^+}$ is one-dimensional. On the other hand, as mentioned previously $\calk^{{U^{(mn)}}^+}\ne 0$. So we have $\texttt{dim}\calk^{{U^{(mn)}}^+}=1$.

 According to Theorem \ref{thm: Shi thm}, $H^{0}(G\slash B^{(i-1)}, \lambda_{i-1})$ has a $G_\ev$-submodule isomorphic to $H^0_\ev(\lambda_{i-1})$ which  admits  one-dimensional weight space of the $B_\ev^+$-highest weight $\lambda_{i-1}=\lambda-(\beta_{1}+\ldots+\beta_{i-1})$.
 What's more,  by Theorem \ref{thm: Shi thm}(2) there is a $T$-module isomorphism
$$H^{0}(G\slash B^{(i-1)}, \lambda_{i-1}) \cong H^0_\ev(\lambda_{i-1}) \otimes \bwedge(\ggg\slash ({\bbb^{(i-1)}}^-))^*_\bo$$
 where ${\bbb^{(i-1)}}^-=\texttt{Lie}(B^{(i-1)})$.
It is readily known that the set of $T$-weights of
$(\ggg\slash {\bbb^{(i-1)}}^-)^*_\bo$ is exactly
$$\{-\beta_1,\cdots, -\beta_{mn-(i-1)}, \beta_{mn-(i-1)+1},\ldots,\beta_{mn}\}.$$
 Hence, $(\ggg\slash {\bbb^{(i-1)}}^-)^*_\bo$ has  one-dimensional weight space of the ${B^{(mn)}}^+$-highest weight $\beta_{i}+\beta_{i+1}+\ldots+\beta_{(mn)}$. Therefore,
 $H^0(G\slash B^{(i-1)},\lambda_{i-1})$ has one-dimensional weight space of the ${B^{(mn)}}^+$-highest weight $\lambda_{mn}=\lambda-\sum_{i=1}^{mn}\beta_i$.

 Combining the above, we have
 $$H^0(G\slash B^{(i-1)},\lambda_{i-1})^{{U^{(mn)}}^+}=H^0(G\slash B^{(i-1)},\lambda_{i-1})_{\lambda_{mn}}.$$
 The proof is completed.
\end{proof}

\subsubsection{} Now we are in a position to introduce the following important result.
\begin{prop}\label{prop: odd comp nonzero}
Let $\lambda\in X^+(T)$ be a typical weight.  The composite of the above homomorphisms
$$ \widetilde T_{\tbk,{w}_{\bar 1}}: H^0(\lambda)\rightarrow H^0_{\texttt{total}}(\lambda).$$
is nonzero.
\end{prop}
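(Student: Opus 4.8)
The plan is to test the composite $\widetilde T_{\tbk,w_{\bar 1}}$ on the distinguished one-dimensional weight space singled out by Lemma \ref{lem: U fixed}, and to show that this line survives each of the $mn$ factors in the chain. First I would note that every $\widetilde T_{\tbk,\beta_i}\colon H^0(G/B^{(i-1)},\lambda_{i-1})\to H^0(G/B^{(i)},\lambda_i)$ is a morphism of $G$-modules, so in particular $T$-equivariant and equivariant for ${U^{(mn)}}^+$, the unipotent radical of ${B^{(mn)}}^+$. Hence $\widetilde T_{\tbk,\beta_i}$ carries the $\lambda_{mn}$-weight space of its source into the $\lambda_{mn}$-weight space of its target; by Lemma \ref{lem: U fixed} these are precisely the (one-dimensional) ${U^{(mn)}}^+$-fixed subspaces, so on them $\widetilde T_{\tbk,\beta_i}$ is multiplication by a scalar $c_i\in\tbk$. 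Since $\widetilde T_{\tbk,w_{\bar 1}}=\widetilde T_{\tbk,\beta_{mn}}\circ\cdots\circ\widetilde T_{\tbk,\beta_1}$ (see \S\ref{7.5}), it acts on $H^0(\lambda)_{\lambda_{mn}}$ by $c_1c_2\cdots c_{mn}$; thus it is enough to prove $c_i\ne 0$ for every $i$.

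If $(\lambda_{i-1},\beta_i)\not\equiv 0\mod p$, then by (\ref{eq: typical eq}) this is the same as $(\lambda+\rho,\beta_i)\not\equiv 0\mod p$, and Lemma \ref{L4.1}(1) tells us $\widetilde T_{\tbk,\beta_i}$ is an isomorphism, so $c_i\ne 0$. The substantial case is $(\lambda_{i-1},\beta_i)\equiv 0\mod p$. Using the exactness argument in the proof of Lemma \ref{L4.1}(2.a) together with (\ref{eq4.1'}), I would identify
$$\ker\widetilde T_{\tbk,\beta_i}=\texttt{ind}^G_{P(\beta_i)}(\tbk X_{\lambda_{i-1}}),$$
where, by Lemma \ref{lem: basic computation sec 4}(2.2) (applicable since $(\lambda_{i-1},\beta_i)\equiv 0\mod p$), the line $\tbk X_{\lambda_{i-1}}$ is a genuine one-dimensional $P(\beta_i)$-module of weight $\lambda_{i-1}$ (extend it trivially over the unipotent radical of $P(\beta_i)$, as in the proof of Lemma \ref{L4.1}(2.b)). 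It then suffices to show that $\lambda_{mn}$ is not a weight of this kernel: in that case the $\lambda_{mn}$-line of $H^0(G/B^{(i-1)},\lambda_{i-1})$ is not annihilated, i.e. $c_i\ne 0$, and the proposition follows.

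For this last point I would invoke the parabolic analogue of the $T$-module decomposition of Theorem \ref{thm: Shi thm}(2) for the quotient $G/P(\beta_i)$ (whose underlying even scheme is $G_\ev/B_\ev$, since $P(\beta_i)_\ev=B_\ev$), which holds by Brundan's methods and, together with Lemma \ref{lem: typical}, yields a $T$-module isomorphism
$$\texttt{ind}^G_{P(\beta_i)}(\tbk X_{\lambda_{i-1}})\ \cong\ H^0_\ev(\lambda_{i-1})\otimes\bwedge(\ggg\slash\texttt{Lie}\,P(\beta_i))^*_\bo .$$
Reading off $\texttt{Lie}\,P(\beta_i)$ from the simple systems $\Pi_{\beta_i}$ of Lemma \ref{lem: adjacent borels}, the odd $T$-weights occurring in $(\ggg\slash\texttt{Lie}\,P(\beta_i))^*_\bo$ are $\beta_1,\dots,\beta_{i-1}$ and $-\beta_{i+1},\dots,-\beta_{mn}$ -- crucially, $-\beta_i$ is absent. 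Hence every weight $\mu$ of the left-hand side satisfies $\lambda_{i-1}-\mu\equiv(|S_-|-|S_+|)(\delta_1-\epsilon_1)$ modulo the even root lattice for some $S_+\subseteq\{1,\dots,i-1\}$ and $S_-\subseteq\{i+1,\dots,mn\}$, while $\lambda_{i-1}-\lambda_{mn}=\beta_i+\beta_{i+1}+\cdots+\beta_{mn}\equiv(mn-i+1)(\delta_1-\epsilon_1)$ modulo that lattice; since $|S_-|-|S_+|\le|S_-|\le mn-i<mn-i+1$, the weight $\lambda_{mn}$ cannot occur.

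I expect the main hurdle to be exactly this degenerate case: setting up the minimal parabolics $P(\beta_i)$ precisely enough to justify the parabolic form of Theorem \ref{thm: Shi thm}(2) and to read off $(\ggg\slash\texttt{Lie}\,P(\beta_i))_\bo$ uniformly along the whole chain $B^{(0)},\dots,B^{(mn)}$, so that the bookkeeping showing that the odd root $\beta_i$ is genuinely unavailable goes through (equivalently, that descending from $\lambda_{i-1}$ to $\lambda_{mn}$ runs out of admissible negative odd roots by exactly one). As a cross-check one can argue integrally over a discrete valuation ring $A$ with $\texttt{char}(\bbk)=0$ and residue field embedded in $\tbk$: all $H^0_A(G/B^{(j)},\lambda_j)$ are $A$-free with base change (Theorem \ref{thm: Shi thm}(2), Lemma \ref{lem: typical}), the relevant weight space is $A$-free of rank one, $\widetilde T_{A,w_{\bar 1}}$ acts on it by $\prod_i c_i\in A$, Lemma \ref{lem: K-iso Odd} forces this product to be a unit of $\bbk$, and the explicit $\GL_A(1|1)$ formulas (\ref{eq4.1'})--(\ref{eq4.1''}) force each $c_i=\pm 1$ (the fixed vector is transported by the coefficient-one rule $B_{\lambda_{i-1}}\mapsto Y_{\lambda_i}$, never by $X_{\lambda_{i-1}}\mapsto(\lambda_{i-1},\beta_i)C_{\lambda_i}$, which would carry the wrong weight); reduction modulo the maximal ideal of $A$ then keeps $\widetilde T_{\tbk,w_{\bar 1}}$ nonzero on that line.
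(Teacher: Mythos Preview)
Your strategy---follow the one-dimensional $\lambda_{mn}$-weight line (the ${U^{(mn)}}^+$-fixed line of Lemma \ref{lem: U fixed}) through the chain---is exactly the one the paper uses, and your treatment of the nondegenerate step is the same. Where you diverge is in the degenerate step $(\lambda_{i-1},\beta_i)\equiv 0\mod p$: you identify $\ker\widetilde T_{\tbk,\beta_i}$ as $\texttt{ind}^G_{P(\beta_i)}(\tbk X_{\lambda_{i-1}})$ and then exclude $\lambda_{mn}$ from its weights by a root-lattice count, which in turn requires a parabolic analogue of Theorem \ref{thm: Shi thm}(2) for $G/P(\beta_i)$ that the paper never states or proves. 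Your weight-count is correct, but supplying that parabolic decomposition is exactly the ``main hurdle'' you flag, and it is unnecessary work.

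The paper sidesteps all of this with a one-line observation. By Lemma \ref{L4.1} (part (1) in the nondegenerate case, and part (2.a.2) in the degenerate case, where $\texttt{im}\,\widetilde T_{\tbk,\beta_i}=\ker\widetilde T'_{\tbk,\beta_i}$), the image $\texttt{im}\,\widetilde T_{\tbk,\beta_i}$ is a \emph{nonzero} $G$-submodule of $H^0(G/B^{(i)},\lambda_i)$. Hence its ${U^{(mn)}}^+$-fixed subspace is nonzero; but by Lemma \ref{lem: U fixed} the ambient fixed subspace is already one-dimensional and equals $H^0(G/B^{(i)},\lambda_i)_{\lambda_{mn}}$, so the image contains the full $\lambda_{mn}$-line. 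Since $\widetilde T_{\tbk,\beta_i}$ is $T$-equivariant and both $\lambda_{mn}$-weight spaces are one-dimensional, each factor is bijective on the $\lambda_{mn}$-line, and the composite is nonzero. This avoids any analysis of the kernel, any parabolic tensor decomposition, and the integral cross-check you sketch at the end.
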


\begin{proof}   Applying Lemma \ref{L4.1} (its statement(1) and its statement (2.1.2)) to the case
$$\tilde T_{\tbk_{\lambda_{i}},\beta_i}:H^0(G\slash B^{(i-1)},\lambda_{i-1})\rightarrow H^0(G\slash B^{(i)},\lambda_{i}),$$
we have that $\texttt{im}(\tilde T_{\tbk,\beta_i})$ is not zero. Hence for the unipotent group  ${U^{(mn)}}^+$, the fixed point subspace $\texttt{im}(\tilde T_{\tbk,\beta_i})^{{U^{(mn)}}^+}$ is surely nonzero. By Lemma \ref{lem: U fixed}, $\texttt{im}(\tilde T_{\tbk,\beta_i})^{{U^{(mn)}}^+}=H^0(G\slash B^{(i)},\lambda_{i})_{\lambda_{mn}}$. Hence all one-dimensional ${U^{(mn)}}^+$-fixed point subspaces concerned are preserved by the sequence of  nonzero morphisms $\tilde T_{\tbk,\beta_i}$. Consequently, the composite of them are nonzero. The proof is completed.
\end{proof}

\begin{remark} Lemma \ref{lem: U fixed} and Proposition \ref{prop: odd comp nonzero} are true for the case of base fields of characteristic zero.
\end{remark}

\section{Weyl modules and related homomorphisms}
Keep the notations and assumptions as before. Especially, $w_{\bar 0}$ is the longest element of $W$. For  $\lambda\in X^+(T)$,  inductively set $\lambda_i$ with the initial one $\lambda_0=\lambda$, and the terminal one $\lambda_{mn}=\lambda-2\rho_\bo$. For $w\in W$, denote $w.\lambda=w(\lambda)-\rho$
 for $\rho=\rho_\bz-\rho_\bo=
{1\over 2}(\sum_{\alpha\in \Phi^+_{\bz}}\alpha-\sum_{\beta\in \Phi^+_{\bo}}\beta)$.

\subsection{Weyl modules} Recall the  length   of $w_{\bar 0}$ is $l(w_{\bar 0})=|\Phi_\bz^+|$.  For $\lambda\in X(T)$, define Weyl module
$$V(\lambda):= R^{l(w_{\bar 0})}\texttt{ind}^G_B(\tbk_{w_{\bar 0}.\lambda})\cong H^{l(w_{\bar 0})}(G/B, \mathscr{L}(\textbf k_{w_{\bar 0}.\lambda})).$$
As usual, we write $V(\lambda)= H^{l(w_{\bar 0})}({w_{\bar 0}.\lambda})$.
Furthermore,  it can be defined over any commutative $\bbz$-algebra $A$, written as
$$V_A(\lambda)=R^{l(w_{\bar 0})}\texttt{ind}^{G_A}_{B_A}(A_{w_\bz.\lambda}).$$
 Then $$V_A(\lambda)=
H_A^{l(w_{\bar 0})}(w_\bz.\lambda).  $$
Recall  we have (cf. Theorem \ref{Serre duality} and Lemma \ref{lem: Ber}) $$ (R^i\texttt{ind}^G_BM )^*\cong R^{l(w_\bz)-i}\texttt{ind}^G_B (M^*\otimes \texttt{Ber}(X))$$
 with $X=G\slash B$,  and
  $$\texttt{Ber}(X)\cong \scrL_\ev(\bwedge(\ggg\slash \bbb^-)_\bo\otimes \bk_{-2\rho_\bz}).$$
Hence,  we have    
$$V(\lambda)^*\cong \texttt{ind}^G_B(\textbf k_{w_{\bar 0}.\lambda}^*\otimes \texttt{Ber}(X)))\cong\texttt{ind}^G_H(\textbf k_{-w_{\bar 0}\lambda}\otimes \bwedge(\ggg\slash \bbb^-)_\bo).$$
Thus
 \begin{equation}\label{f8.1}V(\lambda)\cong \texttt{ind}^G_B(\textbf k_{-w_{\bar 0}\lambda}\otimes \bwedge(\ggg\slash \bbb^-)_\bo)^*.\end{equation}

  In the following arguments we denote $M=\textbf k_{-w_{\bar 0}\lambda}\otimes \bwedge(\ggg\slash \bbb^-)_\bo$ for the time being.

\begin{lemma} \label{lem9.1} Assume $\lambda\in X^+(T)$.
The following statements hold.
\begin{itemize}
\item[(1)] $H^{0}(G/B, \textbf k_{-w_{\bar 0}\lambda}\otimes \bwedge(\ggg\slash \bbb^-)_\bo)$ is nonzero.

\item[(2)] The module $H^{0}(G/B, \textbf k_{-w_{\bar 0}\lambda}\otimes \bwedge(\ggg\slash \bbb^-)_\bo)$ has $B^+$-highest weight $-w_{\bar 0}\lambda+2\rho_{\bar 1}$.

\item[(3)] Denote $\gamma=-w_{\bar 0}\lambda+2\rho_{\bar 1}$. Then
$H^{0}(G/B, \textbf k_{-w_{\bar 0}\lambda}\otimes \bwedge(\ggg\slash \bbb^-)_\bo)$  has simple socle isomorphic to $L(\gamma)$.
Correspondingly,  $V(\lambda)$ has simple head isomorphic to  $L(\gamma)^*\cong L(-w_0\tilde\gamma)$.

\item[(4)]  $\texttt{ch}(V(\lambda))=\texttt{ch}(H^0(\lambda))$.
\end{itemize}
\end{lemma}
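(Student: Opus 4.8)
The plan is to recognize that, despite its appearance, the coefficient module $M=\tbk_{-w_{\bar 0}\lambda}\otimes\bwedge(\ggg/\bbb^-)_\bo$ is one–dimensional. By Lemma \ref{lem: Ber} (and as already used in deriving \eqref{f8.1}), the ``$\bwedge$'' here is the top exterior form, and the top exterior power of $(\ggg/\bbb^-)_\bo$ — a $B$–module with $T$–weights exactly $\Phi^+_\bo$ — carries the weight $\sum_{\beta\in\Phi^+_\bo}\beta=2\rho_\bo=2\rho_{\bar 1}$. Thus $M$ is the line $\tbk_\gamma$ of weight $\gamma:=-w_{\bar 0}\lambda+2\rho_{\bar 1}$, so that
$$H^0\big(G/B,\ \tbk_{-w_{\bar 0}\lambda}\otimes\bwedge(\ggg/\bbb^-)_\bo\big)=H^0(\gamma),\qquad V(\lambda)\cong H^0(\gamma)^{*}.$$
Moreover $\gamma\in X^+(T)$: indeed $-w_{\bar 0}\lambda\in X^+(T)$, and $2\rho_\bo$ is $W$–invariant, hence orthogonal to every even root, so $(\gamma,\alpha)=(-w_{\bar 0}\lambda,\alpha)\ge0$ for all $\alpha\in\Phi^+_\bz$. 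Once this identification is made, the super flag scheme disappears from the argument and everything follows from Theorem \ref{lem8.3}.

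With this in hand, I would deduce (1)–(3) as follows. Part (1) is Theorem \ref{lem8.3}(2) applied to $\gamma\in X^+(T)$. For (2), Theorem \ref{lem8.3}(3) identifies the $B^+$–socle of $H^0(\gamma)$ with its $\gamma$–weight line; and $\gamma$ is the unique maximal weight of $H^0(\gamma)$ of multiplicity one, since through the decomposition of Theorem \ref{thm: Shi thm}(2) the weights of $H^0(\gamma)$ are those of $H^0_\ev(\gamma)$ shifted down by sums of positive odd roots, hence all lie in $\gamma-\bbn\Phi^+$ with $\gamma$ occurring once. For (3), $\texttt{soc}_G H^0(\gamma)=L(\gamma)$ is simple by Theorem \ref{lem8.3}(3); dualizing through $V(\lambda)\cong H^0(\gamma)^{*}$ then shows that $V(\lambda)$ has simple head $\cong L(\gamma)^{*}$, and $L(\gamma)^{*}\cong L(-w_\bz\tilde\gamma)$ by Remark \ref{rem: typical}(1).

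Finally, (4) is a short character computation. From $V(\lambda)\cong H^0(\gamma)^{*}$ one gets $\texttt{ch}\,V(\lambda)=\ov{\texttt{ch}\,H^0(\gamma)}$, the bar denoting the substitution $e^\mu\mapsto e^{-\mu}$. By the character formula of \S\ref{sec: character}, $\texttt{ch}\,H^0(\gamma)=\tfrac{A(\gamma+\rho_\bz)}{A(\rho_\bz)}\,\Xi$ with $\Xi=\prod_{\beta\in\Phi^+_\bo}(1+e^{-\beta})$. Applying the bar — using that $-w_\bz$ fixes $\rho_\bz$ and that $\rho_\bo$ is $W$–invariant, so that $-w_\bz\gamma=\lambda-2\rho_\bo$ and $A(\lambda-2\rho_\bo+\rho_\bz)=e^{-2\rho_\bo}A(\lambda+\rho_\bz)$ — together with the identity $e^{-2\rho_\bo}\prod_{\beta\in\Phi^+_\bo}(1+e^{\beta})=\prod_{\beta\in\Phi^+_\bo}(1+e^{-\beta})=\Xi$, one arrives at $\texttt{ch}\,V(\lambda)=\tfrac{A(\lambda+\rho_\bz)}{A(\rho_\bz)}\,\Xi=\texttt{ch}\,H^0(\lambda)$, which is (4). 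The only real work is this last bookkeeping and the clean identification of $M$ as the line $\tbk_\gamma$; no odd–reflection structure theory is needed at this stage — that enters only afterwards, when $H^0(\gamma)$ is compared with the totally–odd induced module $\total$.
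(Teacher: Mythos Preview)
Your proof is correct and considerably cleaner than the paper's. The crucial move is recognizing that the coefficient module is one-dimensional: the symbol $\bwedge$ in \eqref{f8.1} must denote the \emph{top} exterior power, because it descends from Lemma~\ref{lem: Ber} where $\texttt{Ber}(X)$ is a line bundle. Once $M=\tbk_\gamma$, the module in question is literally $H^0(\gamma)$ with $\gamma=-w_{\bar 0}\lambda+2\rho_{\bar 1}\in X^+(T)$, and parts (1)--(3) are immediate from Theorem~\ref{lem8.3} together with the duality $L(\gamma)^*\cong L(-w_{\bz}\tilde\gamma)$ of Remark~\ref{rem: typical}. Your character computation for (4) via the bar involution is also valid.

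The paper, by contrast, does not make this identification explicit and instead reproves the highest-weight and simple-socle properties from scratch using the description of $\calh$ as morphisms $f:G\to M$ and the density of $U^+B^-$. Somewhat confusingly, its parenthetical remark ``the associated sheaf over $G/B$ has rank greater than one'' suggests it is treating $\bwedge$ as the full exterior algebra; but then the step ``$f(1)$ determines $f$, hence $\dim\calh^{U^+}=1$'' only yields $\dim\calh^{U^+}\le\dim M$, which is vacuous unless $\dim M=1$. So the paper's argument is really the same as yours underneath, just expressed less transparently. For part (4) the paper proceeds via the $T$-module decomposition \eqref{eq: Weyl mod decomp} of $V(\lambda)$ coming from Theorem~\ref{thm: Shi thm}(2), which is a bit shorter than your bar computation; either route is fine.
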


\begin{proof} (1) Set $\calh:=H^{0}(G/B, \tbk_{-w_{\bar 0}\lambda}\otimes \bwedge(\ggg\slash \bbb^-)_\bo)$. By the above arguments, $\calh^*\cong H^{l(w_{\bar 0})}(G/B, {w_{\bar 0}.\lambda})$.  Note that as a $T$-module there is an isomorphism as below (see Theorem \ref{thm: Shi thm}(2))
\begin{align}\label{eq: Weyl mod decomp}
H^{l(w_{\bar 0})}({w_{\bar 0}.\lambda})\cong H_\ev^{l(w_{\bar 0})}(w_{\bar 0}.\lambda)\otimes \bwedge(\ggg\slash \bbb^-)^*_\bo.
\end{align}
By the classical Serre duality, $H^{l(w_{\bar 0})}(G_\ev/B_\ev, {w_{\bar 0}.\lambda})\cong H^0(G_\ev/B_\ev,{-w_\bz\lambda})^*\ne 0$. Hence Theorem \ref{thm: Shi thm}(1) ensures that $\calh$ is nonzero.

(2) We will prove that $\calh^{U^+}= \calh_{-w_{\bz}\lambda+2\rho_{\bo}}$ by the same argument as in the proof of Lemma \ref{lem: U fixed} with suitable  change (in the present situation, the associated sheaf over $G\slash B$ has rank greater than one).
In order to show this, by definition we have for any  commutative $\tbk$-superalgebra $R$
$$\calh=\{\phi\in \texttt{Mor}(G,M)\mid \phi(xb)=b^{-1}\phi(x)\text{ for all }x\in G(R), b\in B(R)\}$$
where $\texttt{Mor}(G,M)$ stands for the morphism set, $M$ is regarded as an additive supergroup scheme, i.e. $M(R)$ is identified with $M\otimes_\tbk R$.
The action of $G(R)$ is given by left translation. For any $f\in \calh^{U^+}$, it satisfies $f(ub)=b^{-1}f(1)$ for all $u\in U^+(R)$ where by definition $f(1)\in M(R)=R_{-w_{\bar 0}\lambda}\otimes \bwedge(\ggg\slash \bbb^-)_\bo$.
Hence, $f|_{U^+(R)B(R)}$ is determined by $f(1)$.
Recall again that as a topological space, open subsets of $G$ are by definition just ones of $G_\ev$ while $U_\ev^+B_\ev$ is open dense (see \cite[\S8.3]{Sp} or \cite[\S{II.1.9}]{Jan3}). Then we can conclude that $f(1)$ determines  $f$. Correspondingly, $\calh^{U^+}$ is of dimension one. On the other hand,  $\calh^{U^+}\ne 0$.
So we have $\texttt{dim}\calh^{U^+}=1$. Taking the generalized tensor identity into an account (see (\ref{eq: gen ten id}) and \cite[\S{I.4.8}]{Jan3}), by the above arguments
we have the following $T$-module decomposition
\begin{align}\label{eq: T-module decomp}
\calh
 \cong H^{0}_\ev(-w_{\bar 0}\lambda)\otimes \bwedge(\ggg\slash \bbb^-)_\bo.
 \end{align}
By comparing the weights, we know that $-w_{\bar 0}\lambda+2\rho_\bo$ is a $B^+$-maximal weight, and its weight space is one dimensional here, as below
\begin{align*}\calh_{-w_{\bar 0}\lambda+2\rho_\bo}
\cong&H_\ev^{0}({-w_{\bar 0}\lambda})_{-w_{\bar 0}\lambda}\otimes \wedge^{mn}(\ggg\slash \bbb^-)_\bo.\end{align*}

As in the case of reductive algebraic groups, if $\mu$ is a $B^+$-maximal one among the weights of $\calh$, then $\calh_\mu\subset \calh^{U^+}$. Hence  we can conclude
$\calh^{U^+}\cong H^{0}_\ev({-w_{\bar 0}\lambda})_{-w_{\bar 0}\lambda}\otimes \wedge^{mn}(\ggg\slash \bbb^-)_\bo$. Correspondingly, $\calh^{U^+}=\calh_{-w_\bz\lambda+2\rho_\bo}$,  and any weight of $\calh$ is smaller than $-w_\bz\lambda+2\rho_\bo$ in the sense of the standard positive root system.

(3) For the last statement, if $L_1$ and $L_2$ are two distinct simple submodules of $\calh$, then $L_1^{U^+}\oplus L_2^{U^+}\subset \calh^{U^+}$.
There would be a contradiction with  rank $1$ of $\calh^{U^+(R)}$ over $R$. Hence $H^{0}(G/B, \textbf k_{-w_{\bar 0}\lambda}\otimes \bwedge(\ggg\slash \bbb^-)_\bo)$ has simple socle $L(\gamma)$.
As $V(\lambda)\cong H^{0}(G/B, \textbf k_{-w_{\bar 0}\lambda}\otimes \bwedge(\ggg\slash \bbb^-)_\bo)^*$,  we have that $V(\lambda)$ has simple head $L(\gamma)^*$ which is isomorphic to $L(-w_0\tilde\gamma)$ by \cite[Theoremm 4.5]{BKu}.

(4) Note that $\texttt{ch}(V(\lambda))=\texttt{ch}(\mathcal{H})$.
This statement  is a consequence of (\ref{eq: gen coho decom}) and (\ref{eq: T-module decomp}).

The proof is completed.
\end{proof}

\subsection{}\label{S8.2}

Recall $L(\gamma)^*$ is isomorphic to the head of  $V(\lambda)$ (see Lemma \ref{lem9.1}). We have the following observation.
 \begin{lemma}\label{lem8.2}
 Keep the notations and assumptions as before. The head  of $V(\lambda)$ is isomorphic to the socle of $\total=H^0(G/B^{(mn)},\lambda_{mn})$.
\end{lemma}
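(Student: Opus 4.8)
The plan is to identify both the head of $V(\lambda)$ and the socle of $H^0_{\textsf{total}}(\lambda)=H^0(G/B^{(mn)},\lambda_{mn})$ with a single irreducible module, namely $L(\gamma)^*$ where $\gamma=-w_{\bar 0}\lambda+2\rho_{\bar 1}$. The first half is already done: Lemma \ref{lem9.1}(3) tells us the head of $V(\lambda)$ is $L(\gamma)^*\cong L(-w_{\bar 0}\tilde\gamma)$. So the real content is to show that the socle of $H^0(G/B^{(mn)},\lambda_{mn})$ is isomorphic to $L(\gamma)^*$, and the essential input for that is Corollary \ref{coro: total socle}(2), which asserts precisely $L(\gamma)^*\cong L_{B^{(mn)}}(\lambda_{mn})$.

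First I would recall that for \emph{any} Borel subgroup $B'$ of $G$ containing $T$ and any weight $\mu$, the $G$-module $H^0(G/B',\mu)$ has a simple socle: this follows from Theorem \ref{lem8.3}(3) (applied to the Borel $B'={B^{(mn)}}$ in place of the standard one, which is legitimate since that theorem and its underlying results hold for any Borel containing $T$), namely that $\text{soc}_{{B^{(mn)}}^+}H^0(G/B^{(mn)},\mu)$ is one-dimensional, equal to the $\mu$-weight space, which forces the $G$-socle to be simple. By definition $L_{B^{(mn)}}(\lambda_{mn})=\text{soc}_G\,H^0(G/B^{(mn)},\lambda_{mn})$, so this simple socle \emph{is} $L_{B^{(mn)}}(\lambda_{mn})$. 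Then I would invoke Corollary \ref{coro: total socle}(2): with $\gamma=-w_{\bar 0}\lambda+2\rho_{\bar 1}$ one has $L(\gamma)^*\cong L_{B^{(mn)}}(\lambda_{mn})$. Combining, $\text{soc}_G\, H^0_{\textsf{total}}(\lambda)\cong L(\gamma)^*$, which matches the head of $V(\lambda)$ from Lemma \ref{lem9.1}(3).

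Since both sides have now been identified with the same $L(\gamma)^*$, the isomorphism follows. I would also want to double-check the bookkeeping that $\lambda_{mn}=\lambda-2\rho_{\bar 1}$ (this is the running convention established at the start of Section 8 and used in Corollary \ref{coro: total socle}), so that $-w_{\bar 0}\lambda_{mn}=-w_{\bar 0}\lambda+2\rho_{\bar 1}=\gamma$ and Corollary \ref{coro: total socle}(2) applies verbatim. I do not anticipate a genuine obstacle here: the lemma is essentially a matching-up of two previously established identifications, and the only point requiring a little care is making sure the simple-socle statement of Theorem \ref{lem8.3}(3) is applied to the non-standard Borel $B^{(mn)}$ — but the arguments in the proof of that theorem (and in Lemma \ref{lem: U fixed}, which handles exactly the ${U^{(mn)}}^+$-fixed points of induced modules attached to the $B^{(i)}$) go through unchanged, since the relevant density of $U_\ev^+B_\ev$ and the one-dimensionality of the top weight space depend only on the purely-even structure, which is the same $B_\ev$ for all the $B^{(i)}$.
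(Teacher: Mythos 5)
Your proposal is correct and follows the same route the paper takes: the paper presents Lemma~\ref{lem8.2} as an immediate observation, pointing to Lemma~\ref{lem9.1}(3) for the identification of the head of $V(\lambda)$ with $L(\gamma)^*$ and to Corollary~\ref{coro: total socle}(2) for $L(\gamma)^*\cong L_{B^{(mn)}}(\lambda_{mn})=\text{soc}\,\total$. Your extra care in justifying that $\total$ has a simple socle (via the $B^{(mn)+}$-socle argument, valid for any Borel containing $T$) and that $\lambda_{mn}=\lambda-2\rho_{\bar 1}$ is just spelling out bookkeeping the paper leaves implicit.
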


\begin{remark}\label{r8.3}
Unlike the case of reductive algebraic groups,  in general, the head of Weyl module  $V(\lambda)$ is not isomorphic to $L(\lambda)$.
When $\lambda$ is $p$-typical (see Remark \ref{rem: typical}), by Lemma \ref{L4.1}(1), $L(\lambda)\cong L_{B^{(mn)}}(\lambda_{mn})$. On the other hand, in this case $\tilde\lambda=\lambda$. It follows that  $L(\lambda)\cong L_{B^{(mn)}}(\lambda_{mn})\cong L(\gamma)^*$. So in this case,  the head of $V(\lambda)$ is isomorphic to $L(\lambda)$.
\end{remark}

\subsection{Homomorphisms arising from Weyl groups}\label{sec: 8.3}
Now we will present  a homomorphism from $V(\lambda)$ to $H^0(\lambda)$, by exploiting the arguments for reductive algebraic groups (see \cite[\S{II.5} and \S{II.8}]{Jan3}) to our case.
\subsubsection{}
Let's first recall  some necessary structural information for $H^i(\lambda)$ over $\bbf$ which is  any given ground field. We temporarily suppose $G$ is over $\bbf$ for the following lemma.

 Let $\alpha$ be  a given simple even root with  even simple reflection $r_\alpha\in W$. Consider the minimal  parabolic subgroup $P(\alpha)$ containing $B=B^-$ and the purely-even root subgroup  $G_\alpha$ (see \cite[\S{II}.1.3]{Jan3}). We look  at the structure $R^i \texttt{ind}^{P(\alpha)}_B\bbf_\lambda$. Note that $\alpha$ is an even root. By Theorem \ref{thm: zubakov},  there is an isomorphism of $P(\alpha)_\ev$-modules: $R^i \texttt{ind}^{P(\alpha)}_B\bbf_\mu\cong
 R^i \texttt{ind}^{P(\alpha)_\ev}_{B_\ev}\bbf_\mu$ (with trivial $R_\alpha$-action on the left hand side for the unipotent radical $R_\alpha$ of $P(\alpha)$). So we can mimic \cite[Proposition II.5.2]{Jan3} as below, with a little modification.

 \begin{lemma}\label{lem: basic cohom comput} The following statements hold.
\begin{itemize}
\item[(1)] The unipotent radical of $P(\alpha)$ acts trivially on each $R^i \texttt{ind}^{P(\alpha)}_B\bbf_\lambda, \forall i\geq 0$.

\item[(2)] If $ (\lambda,\alpha)=-1$, then $R^\bullet \texttt{ind}^{P(\alpha)}_B\bbf_\lambda=0$.

\item[(3)]  \begin{itemize}\item[{\tiny(3.a)}] If $(\lambda,\alpha)=s \geq 0$, then $R^i \texttt{ind}^{P(\alpha)}_B\bbf_\lambda=0, \forall i\neq 0$ and $\texttt{ind}^{P(\alpha)}_B\bbf_\lambda$ has a basis $\{v_i\mid i=0,1,\ldots,s\}$ such that for all $i, 0\leq i \leq s$
    and any commutative  $\bbf$-superalgebra $R=R_\bz\oplus R_\bo$:
    \begin{itemize}
    \item[{\tiny(3.a.1)}]
    $tv_i=(\lambda-i\alpha)(t)v_i, \forall t\in T(R_\bz)$;
        \item[\tiny{(3.a.2)}] $x_\alpha(a)v_i=\sum\limits_{j=0}^{i}{i\choose j}a^{i-j}v_j, \forall a\in R_\bz$;
         \item[\tiny{(3.a.3)}] $x_{-\alpha}(a)v_i=\sum\limits_{j=i}^{s}{s-i\choose s-j}a^{j-i}v_j,  \forall a\in R_\bz$.
             \end{itemize}
 Here and later  $x_{\pm\alpha}(a)$ are  Chevalley generators of  Chevalley supergroups in the same sense of  reductive algebraic groups (see \cite[\S5.2]{FG12}, \cite[\S{II.1.19}]{Jan3}).
\item[{\tiny(3.b)}] If $(\lambda,\alpha)\leq -2$, then $R^i \texttt{ind}^{P(\alpha)}_B\bbf_{\lambda}=0, \forall i\neq 1$ and $R^1\texttt{ind}^{P(\alpha)}_B\bbf_{\lambda}$ has a basis $\{v'_i\mid i=0,1,\ldots,s\}$ such that for all $i, 0\leq i \leq s$ and each super commutative $\bbf$-superalgebra $R=R_\bz\oplus R_\bo$:
    \begin{itemize}
    \item[\tiny{(3.b.1)}] $tv'_i=(r_\alpha.\lambda-i\alpha)(t)v_i, \forall t\in T(R_\bz)$;
        \item[\tiny{(3.b.2)}] $x_\alpha(a)v'_i=\sum\limits_{j=0}^{i}{s-j\choose s-i}a^{i-j}v'_j, \forall a\in R_\bz$;
         \item[\tiny{(3.b.3)}] $x_{-\alpha}(a)v'_i=\sum\limits_{j=i}^{s}{j\choose i}a^{j-i}v'_j,  \forall a\in R_\bz.$
            \end{itemize}
\end{itemize}
\item[(4)] For any $P(\alpha)$-module $M$ and any $i\geq 0$, $R^i\texttt{ind}^G_B M\cong R^i\texttt{ind}^G_{P(\alpha)} M$.
\item[(5)]\begin{itemize}\item[{\tiny(5.a)}] If $(\lambda,\alpha)\geq 0$,  then $H^i(\lambda)\cong H^i(\texttt{ind}^{P(\alpha)}_B\lambda)$ for all $i$. Furthermore, if $\texttt{ch}(\bbf)=0$, or $\texttt{ch}(\bbf)=p>0$ with $(\lambda,\alpha)=rp^m-1, r,m\in\mathbb N, 0<r<p$, then for all $i$
$$H^{i+1}(r_\alpha.\lambda)\cong H^i(\lambda).$$
\item[{\tiny(5.b)}] If $(\lambda,\alpha)\leq -2$, then for all $i$
$$H^i(\lambda)\cong H^{i-1}(R^1\texttt{ind}^{P(\alpha)}_B\lambda). $$
\item[{\tiny(5.c)}] If $ (\lambda,\alpha)=-1$, then $H^\bullet(\lambda)=0$.
\end{itemize}
\end{itemize}
\end{lemma}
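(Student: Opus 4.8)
The plan is to reduce the entire statement to the purely-even Levi subgroup attached to the even root $\alpha$, where all the assertions become the classical $SL_2$-computations of \cite[\S{II.5}]{Jan3}. First I would introduce the Levi $L_\alpha\subseteq G$ generated by $T$ and the root subgroups $G_{\pm\alpha}$. Since $\alpha$ is even, $\Lie L_\alpha=\hhh+\ggg_\alpha+\ggg_{-\alpha}$ is purely even, so $L_\alpha\cong\GL_2\times(\mathbb{G}_{\mathrm m})^{m+n-2}$ is an ordinary reductive group and a Levi factor of $P(\alpha)$; the unipotent radical $R_\alpha$ of $P(\alpha)$ has $\Lie R_\alpha=\sum_{\gamma\in\Phi^+\setminus\{\alpha\}}\ggg_{-\gamma}\subseteq\bbb^-$, hence $R_\alpha\subseteq B$. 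Being normal in $P(\alpha)$ and contained in $B$, $R_\alpha$ acts trivially by left translation on $P(\alpha)/B$, and the inclusion $L_\alpha\hookrightarrow P(\alpha)$ induces an isomorphism $L_\alpha/(B\cap L_\alpha)\xrightarrow{\sim}P(\alpha)/B$. I would conclude that each $R^i\texttt{ind}^{P(\alpha)}_B(\bbf_\lambda)$ is the inflation to $P(\alpha)$ of $R^i\texttt{ind}^{L_\alpha}_{B\cap L_\alpha}(\bbf_\lambda)$; in particular $R_\alpha$ acts trivially on it, which is (1). An alternative route is Theorem \ref{thm: zubakov}, applied to the purely-even subgroup $P(\alpha)_\ev$, using $P(\alpha)/B\cong P(\alpha)_\ev/B_\ev\cong\mathbb{P}^1$.

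Next, since $L_\alpha$ is reductive and the Chevalley generators $x_{\pm\alpha}(a)$ with $a\in R_\bz$ are the usual even ones of the super Chevalley group \cite[\S5.2]{FG12}, I would read off the structure of $R^i\texttt{ind}^{L_\alpha}_{B\cap L_\alpha}(\bbf_\lambda)$ directly from \cite[Proposition II.5.2]{Jan3}. Inflating back to $P(\alpha)$ yields (2) and all of (3) verbatim, including the bases $\{v_i\}$, $\{v_i'\}$ and the action formulas (3.a.1)--(3.b.3); along the way I would record that every torus weight occurring lies in $\lambda+\bbz\alpha$, and that $R^i\texttt{ind}^{P(\alpha)}_B\bbf=0$ for $i>0$, being inflated from $H^i(\mathbb{P}^1,\scrO)$.

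For (4) and (5) I would argue exactly as in the reductive case. Transitivity $\texttt{ind}^G_B\cong\texttt{ind}^G_{P(\alpha)}\circ\texttt{ind}^{P(\alpha)}_B$ combined with the tensor identity \eqref{eq: gen ten id} shows that, for a $P(\alpha)$-module $M$, $R^i\texttt{ind}^{P(\alpha)}_B(\texttt{res}^{P(\alpha)}_BM)\cong M\otimes R^i\texttt{ind}^{P(\alpha)}_B\bbf$ equals $M$ for $i=0$ and vanishes otherwise; hence the Grothendieck spectral sequence degenerates to $R^i\texttt{ind}^G_BM\cong R^i\texttt{ind}^G_{P(\alpha)}M$, which is (4). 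Then (5.b) and (5.c) follow by feeding the concentration statements (3.b) and (2) into (4) and the spectral sequence, and the first half of (5.a) follows the same way from (3.a). For the up-down isomorphism $H^{i+1}(r_\alpha.\lambda)\cong H^i(\lambda)$ I would use that, under the stated hypotheses, the $SL_2$-module $\texttt{ind}^{P(\alpha)}_B\lambda$ is irreducible (the classical simplicity criterion for $H^0(rp^m-1)$, automatic in characteristic $0$); comparing highest weights and characters via (3.b) then produces an isomorphism $R^1\texttt{ind}^{P(\alpha)}_B(r_\alpha.\lambda)\cong\texttt{ind}^{P(\alpha)}_B\lambda$ of $P(\alpha)$-modules, while $R^j\texttt{ind}^{P(\alpha)}_B(r_\alpha.\lambda)=0$ for $j\neq1$, so the degenerate spectral sequence gives $H^{i+1}(r_\alpha.\lambda)\cong R^i\texttt{ind}^G_{P(\alpha)}(\texttt{ind}^{P(\alpha)}_B\lambda)\cong H^i(\lambda)$.

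The bulk of this is bookkeeping, and the genuinely delicate points are confined to two places. The first is the reduction in the opening paragraph: one must make the passage to $L_\alpha$ rigorous \emph{simultaneously} for all higher derived functors of induction in the superscheme setting (equivalently, check carefully the hypotheses of Theorem \ref{thm: zubakov}, or the inflation argument, and in particular that $P(\alpha)/B$ is honestly $\mathbb{P}^1$). The second is the characteristic-$p$ half of (5.a), where identifying $R^1\texttt{ind}^{P(\alpha)}_B(r_\alpha.\lambda)$ with $\texttt{ind}^{P(\alpha)}_B\lambda$ hinges on the simplicity of the relevant rank-one induced module; everything else is a formal transcription of \cite[\S{II.5}]{Jan3}.
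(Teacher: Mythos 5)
Your proposal is correct and takes essentially the same route as the paper: the paragraph immediately preceding the lemma already applies Theorem~\ref{thm: zubakov} with $L=P(\alpha)_\ev$ to reduce $R^i\texttt{ind}^{P(\alpha)}_B\bbf_\lambda$ (with trivial unipotent-radical action) to the purely-even $R^i\texttt{ind}^{P(\alpha)_\ev}_{B_\ev}\bbf_\lambda$, after which (1)--(3) are read off from Jantzen's Proposition~II.5.2, and (4) is obtained exactly by your tensor-identity-plus-spectral-sequence argument. Your choice to work with the Levi $L_\alpha$ rather than $P(\alpha)_\ev$ is a harmless variant (and you name the $P(\alpha)_\ev$ route yourself as an alternative), and where you flesh out (5) from (3)--(4) together with the $SL_2$ simplicity criterion, the paper simply cites the corresponding statements in Zubkov's \cite{Z1}, which carry the same content.
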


\subsubsection{} Turn to the PID $A$ and its fractional field $\bbk$. In the remaining part of this section, we assume that $\text{char}(\bbk)=0$.
Then we have  $G_A$, $B_A$, $T_A$ and $P(\alpha)_A$. For a $B_A$-module $M$, we set $$H^i_{\alpha,A}(M)=R^i\texttt{ind}^{P(\alpha)_A}_{B_A}(M)$$ and $$H^i_A(\lambda):=R^i\texttt{ind}^{G_A}_{B_A}(A_\lambda) $$
  for natural $B_A$-module $A_\lambda$.
\begin{lemma}\label{lem: basic compu on sim ev root}
 The following statements hold.
\begin{itemize}
\item[(1)] If $(\lambda,\alpha)=-1$, then $H^\bullet_{\alpha,A}(\lambda)=0$.

\item[(2)] If $(\lambda,\alpha)\geq 0$, then $H^i_{\alpha,A}(\lambda)=0$ for all $i\ne 0$, and  $H^j_{\alpha,A}(r_\alpha.\lambda)=0$ for all $j\ne 1$. More precisely, for $s:=(\lambda,\alpha)\geq 0$.

\begin{itemize}
\item[{\tiny(2.a)}] $H^1_{\alpha,A}(r_\alpha.\lambda)\cong\sum_{i=0}^sAv_i'$
    is $A$-torsion free of rank $(r+1)$, including $A$-free basis elements $\{v'_i\mid i=0,1,\ldots,r\}$.
\item[{\tiny(2.b)}] $H^0_{\alpha,A}(\lambda)\cong\sum_{i=0}^sAv_i$ is $A$-torsion free of rank $(r+1)$, including $A$-free basis elements $\{v_i\mid i=0,1,\ldots,r\}$.
\item[{\tiny(2.c)}]  There exists $P(\alpha)_A$-module homomorphism
$$T_\alpha(r_\alpha.\lambda):  H^1_{\alpha,A}(r_\alpha.\lambda)\rightarrow H^0_{\alpha,A}(\lambda)$$
with $v'_i\mapsto {r\choose i}v_i$. And
$$T_\alpha(\lambda): H^0_{\alpha,A}(\lambda)\rightarrow H^1_{\alpha,A}(r_\alpha.\lambda)$$
with $v_i\mapsto (r-i)! i!v'_i$.
\end{itemize}
\item[(3)] Furthermore, if $(\lambda,\alpha)\geq 0$, then for any $i$, $$H^i_A(\lambda)\cong R^i\texttt{ind}^{G_A}_{P(\alpha)_A}(H^0_{\alpha,A}(\lambda))\cong H^i_{A}(H^0_{\alpha,A}(\lambda)).$$

\end{itemize}
\end{lemma}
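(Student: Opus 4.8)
The plan is to reduce the whole statement to the rank-one purely-even situation and then transcribe the classical $\GL_2$-computations of Jantzen over the PID $A$. First I would observe that, because $\alpha$ is an even root, the odd parts of $\texttt{Lie}(P(\alpha))$ and of $\bbb^-=\texttt{Lie}(B)$ coincide; equivalently $P(\alpha)=R_\alpha\rtimes L_\alpha$ with $R_\alpha\subseteq B$ its unipotent radical and $L_\alpha$ a \emph{purely-even} Levi of type $\GL_2$ times a torus, so that $P(\alpha)_A/B_A\cong L_{\alpha,A}/(L_{\alpha,A}\cap B_A)\cong P(\alpha)_{\ev,A}/B_{\ev,A}$ is a $\mathbb{P}^1$-bundle over $\Spec A$. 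Hence for any $B_A$-module $M$ on which $R_\alpha$ acts trivially (in particular $M=A_\lambda$), Theorem \ref{thm: zubakov} applied with $L=P(\alpha)_{\ev,A}$, together with the generalized tensor identity \eqref{eq: gen ten id}, would give isomorphisms
$$H^i_{\alpha,A}(M)\;\cong\;R^i\texttt{ind}^{P(\alpha)_{\ev,A}}_{B_{\ev,A}}(\texttt{res}\,M)$$
of $P(\alpha)_{\ev,A}$-modules, with $R_\alpha$ acting trivially on both sides for all $i$. This is the integral counterpart of the reduction already used for Lemma \ref{lem: basic cohom comput}, resting on \cite[Lemmas 8.1, 8.2, 10.4]{Z1}.

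Granting this reduction, parts (1) and (2) become the $\GL_2$-computations of \cite[\S{II.5.2}]{Jan3} carried out over $A$ rather than over a field, exactly as Jantzen does in \cite[\S{II.8.12}]{Jan3}. If $(\lambda,\alpha)=-1$ the associated line bundle on $\mathbb{P}^1_A$ has degree $-1$ and all its cohomology vanishes after the flat base change $\bbz\to A$; this is (1). If $s:=(\lambda,\alpha)\ge 0$, Kempf vanishing over $A$ gives $H^i_{\alpha,A}(\lambda)=0$ for $i\ne 0$ and $H^j_{\alpha,A}(r_\alpha.\lambda)=0$ for $j\ne 1$ (note $(r_\alpha.\lambda,\alpha)=-s-2\le -2$), while $H^0_{\alpha,A}(\lambda)$ and $H^1_{\alpha,A}(r_\alpha.\lambda)$ are finitely generated and torsion-free, hence free, over the PID $A$; since $\texttt{char}(\bbk)=0$ both have $\bbk$-rank $s+1$, and they carry the $A$-bases $\{v_i\}_{i=0}^{s}$, $\{v'_i\}_{i=0}^{s}$ obeying the $\bbz$-defined formulas recorded in Lemma \ref{lem: basic cohom comput}(3.a)--(3.b). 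For (2.c) I would check directly, using those formulas and the identity ${s-j\choose s-i}{s\choose j}={s\choose i}{i\choose j}$ (and its transpose), that $v'_i\mapsto {s\choose i}v_i$ and $v_i\mapsto (s-i)!\,i!\,v'_i$ are morphisms of $L_{\alpha,A}$-modules, hence of $P(\alpha)_A$-modules by inflation through $R_\alpha$; since ${s\choose i}(s-i)!\,i!=s!$, both composites $T_\alpha(\lambda)\circ T_\alpha(r_\alpha.\lambda)$ and $T_\alpha(r_\alpha.\lambda)\circ T_\alpha(\lambda)$ are multiplication by $s!$.

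For (3), with $(\lambda,\alpha)\ge 0$ the previous step already yields $R^j\texttt{ind}^{P(\alpha)_A}_{B_A}(A_\lambda)=0$ for $j\ne 0$. Feeding this into the Grothendieck spectral sequence
$$R^i\texttt{ind}^{G_A}_{P(\alpha)_A}\bigl(R^j\texttt{ind}^{P(\alpha)_A}_{B_A}(A_\lambda)\bigr)\ \Longrightarrow\ R^{i+j}\texttt{ind}^{G_A}_{B_A}(A_\lambda)=H^{i+j}_A(\lambda)$$
for the composite $\texttt{ind}^{G_A}_{B_A}=\texttt{ind}^{G_A}_{P(\alpha)_A}\circ\texttt{ind}^{P(\alpha)_A}_{B_A}$ (available over any base ring in Brundan's framework, cf. \cite[\S2]{B06}) makes it collapse, yielding $H^i_A(\lambda)\cong R^i\texttt{ind}^{G_A}_{P(\alpha)_A}(H^0_{\alpha,A}(\lambda))$. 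Finally the isomorphism $R^i\texttt{ind}^{G_A}_{P(\alpha)_A}\cong R^i\texttt{ind}^{G_A}_{B_A}$ on $P(\alpha)_A$-modules — the $A$-analogue of Lemma \ref{lem: basic cohom comput}(4) — follows from $\texttt{ind}^{P(\alpha)_A}_{B_A}(A_0)=A_0$, the generalized tensor identity, and the same spectral sequence, delivering the second isomorphism $R^i\texttt{ind}^{G_A}_{P(\alpha)_A}(H^0_{\alpha,A}(\lambda))\cong H^i_A(H^0_{\alpha,A}(\lambda))$.

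I expect the main obstacle to be the reduction in the first paragraph: one must make sure that in the super setting the odd directions of $R_\alpha$ genuinely contribute nothing to the cohomology and that the resulting isomorphisms are $P(\alpha)_A$-equivariant (not merely $P(\alpha)_{\ev,A}$-equivariant), and one must keep track of torsion-freeness over $A$ throughout so that ``free of rank $s+1$'' is legitimate and the explicit $A$-bases survive. Everything after that is the classical $\GL_2$-bookkeeping of \cite[\S{II.5.2},\,\S{II.8.12}]{Jan3} together with the super input of \cite{Z1}.
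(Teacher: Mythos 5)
Your proposal is correct and follows essentially the same route as the paper. The paper's own proof of this lemma is a one-line appeal to the preceding Lemma \ref{lem: basic cohom comput}, and the strategy behind that lemma (as the surrounding text makes explicit) is exactly what you lay out: use Theorem \ref{thm: zubakov} to identify $R^i\texttt{ind}^{P(\alpha)}_{B}$ with the purely-even $R^i\texttt{ind}^{P(\alpha)_\ev}_{B_\ev}$ computation since $\alpha$ is even and $R_\alpha$ acts trivially, then transcribe Jantzen's $\GL_2$-bookkeeping from \cite[\S{II.5.2}, \S{II.8.12}--8.13]{Jan3} over the PID $A$ to obtain the torsion-free bases $\{v_i\}$, $\{v_i'\}$ and the maps $T_\alpha(\lambda)$, $T_\alpha(r_\alpha.\lambda)$ with composite multiplication by $s!$, and finally collapse the Grothendieck spectral sequence for $\texttt{ind}^{G_A}_{B_A}=\texttt{ind}^{G_A}_{P(\alpha)_A}\circ\texttt{ind}^{P(\alpha)_A}_{B_A}$ for part (3). (One cosmetic remark: the paper's statement writes ``$s:=(\lambda,\alpha)$'' but then uses $r$ in (2.a)--(2.c); these are the same integer, as you correctly treat them.)
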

\begin{proof} All statements follows from the previous lemma. In particular, (3) follows from its fourth statement.
\end{proof}
\subsubsection{}\label{sec: compose scalar}
More generally, for simple even root $\alpha$, and $\mu\in X(T)$ with $(\mu,\alpha)\geq 0$, we have  the following lemma.
\begin{lemma}\label{lem: furth comput evenro} $  H_A^i(r_\alpha.\mu)\cong R^{i-1}\texttt{ind}^{G_A}_{P(\alpha)_A}(H^1_{\alpha,A}(r_\alpha.\mu))\cong H^{i-1}_{A}(H^1_{\alpha,A}(r_\alpha.\mu))$.
\end{lemma}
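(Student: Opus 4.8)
The statement is the natural analogue of the last displayed isomorphism in \cite[\S II.8.13]{Jan3}, and the plan is to reduce everything to Lemma \ref{lem: basic compu on sim ev root}. First I would record that, since $(\mu,\alpha)\geq 0$, Lemma \ref{lem: basic compu on sim ev root}(2) gives $H^i_{\alpha,A}(r_\alpha.\mu)=0$ for all $i\neq 1$; in other words the complex computing $R^\bullet\texttt{ind}^{P(\alpha)_A}_{B_A}(A_{r_\alpha.\mu})$ has cohomology concentrated in degree $1$. Then I would feed this into the Grothendieck spectral sequence associated to the factorization $\texttt{ind}^{G_A}_{B_A}=\texttt{ind}^{G_A}_{P(\alpha)_A}\circ\texttt{ind}^{P(\alpha)_A}_{B_A}$, namely
$$E_2^{i,j}=R^i\texttt{ind}^{G_A}_{P(\alpha)_A}\bigl(R^j\texttt{ind}^{P(\alpha)_A}_{B_A}(A_{r_\alpha.\mu})\bigr)\Longrightarrow R^{i+j}\texttt{ind}^{G_A}_{B_A}(A_{r_\alpha.\mu})=H^{i+j}_A(r_\alpha.\mu).$$
Because the $E_2$-page is supported on the single row $j=1$, the spectral sequence degenerates and yields $H^i_A(r_\alpha.\mu)\cong R^{i-1}\texttt{ind}^{G_A}_{P(\alpha)_A}\bigl(R^1\texttt{ind}^{P(\alpha)_A}_{B_A}(A_{r_\alpha.\mu})\bigr)=R^{i-1}\texttt{ind}^{G_A}_{P(\alpha)_A}(H^1_{\alpha,A}(r_\alpha.\mu))$, which is the first isomorphism claimed.

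For the second isomorphism, $R^{i-1}\texttt{ind}^{G_A}_{P(\alpha)_A}(H^1_{\alpha,A}(r_\alpha.\mu))\cong H^{i-1}_A(H^1_{\alpha,A}(r_\alpha.\mu))$, I would invoke Lemma \ref{lem: basic cohom comput}(4), which says precisely that for any $P(\alpha)$-module $M$ one has $R^{i}\texttt{ind}^G_{B}M\cong R^{i}\texttt{ind}^G_{P(\alpha)}M$; the same argument works verbatim over the PID $A$ (the generalized tensor identity \eqref{eq: gen ten id} together with the vanishing of the higher $\texttt{ind}^{P(\alpha)_A}_{B_A}$ on modules inflated from $P(\alpha)_A$, as in the parenthetical proof of Lemma \ref{lem: basic cohom comput}(4)), so that $H^{i-1}_A(H^1_{\alpha,A}(r_\alpha.\mu))=R^{i-1}\texttt{ind}^{G_A}_{B_A}(H^1_{\alpha,A}(r_\alpha.\mu))\cong R^{i-1}\texttt{ind}^{G_A}_{P(\alpha)_A}(H^1_{\alpha,A}(r_\alpha.\mu))$. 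Chaining the two identifications gives the lemma.

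The only genuine point requiring care — and the place I expect the main obstacle — is the justification that the Grothendieck spectral sequence is available in the super setting over an arbitrary PID $A$: one needs that $\texttt{ind}^{P(\alpha)_A}_{B_A}$ sends injective $B_A$-modules to $\texttt{ind}^{G_A}_{P(\alpha)_A}$-acyclic objects. This is the super/relative analogue of \cite[\S I.4.1, I.5.13]{Jan3}; it holds because $\texttt{ind}^{P(\alpha)_A}_{B_A}$ of an injective is again injective (right adjoints of exact functors preserve injectives, and $\texttt{res}$ is exact), and injectives are acyclic for any left-exact functor. Everything else is formal once Lemma \ref{lem: basic compu on sim ev root}(2) supplies the concentration of $R^\bullet\texttt{ind}^{P(\alpha)_A}_{B_A}(A_{r_\alpha.\mu})$ in degree $1$. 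I would therefore keep the proof to a single sentence citing Lemma \ref{lem: basic compu on sim ev root} and the spectral sequence, exactly parallel to how \cite[\S II.8.13]{Jan3} disposes of the classical case.
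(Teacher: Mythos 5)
Your proposal is correct and follows the same route as the paper's own proof: both rest on the Grothendieck spectral sequence for $\texttt{ind}^{G_A}_{B_A}=\texttt{ind}^{G_A}_{P(\alpha)_A}\circ\texttt{ind}^{P(\alpha)_A}_{B_A}$, degenerate it using the concentration in degree $1$ furnished by Lemma \ref{lem: basic compu on sim ev root}(2), and identify $R^{i-1}\texttt{ind}^{G_A}_{P(\alpha)_A}$ with $H^{i-1}_A$ via the generalized tensor identity as in Lemma \ref{lem: basic cohom comput}(4). The only difference is that you spell out the second isomorphism and the availability of the spectral sequence a bit more explicitly than the paper, which simply cites \cite[\S I.4.5]{Jan3} and leaves those steps implicit.
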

\begin{proof} For supergroup schemes,  the following spectral sequence still holds as happening in the case of algebraic group schemes (see \cite[\S{I.4.5}]{Jan3})
$$R^i\texttt{ind}^{G_A}_{P(\alpha)_A}( R^j\texttt{ind}^{P(\alpha)_A}_{B_A} A_{r_\alpha.\mu})\Longrightarrow R^{i+j} \texttt{ind}^{G_A}_{B_A}(A_{ r_\alpha.\mu}).$$
On the other hand, by Lemma \ref{lem: basic compu on sim ev root}(2) we have $H^j_{\alpha,A}(r_\alpha.\mu)=0$ for all $j\ne 1$ when $(\mu,\alpha)\geq 0$.
Hence,  $H_A^i(r_\alpha.\mu)\cong R^{i-1}\texttt{ind}^{G_A}_{P(\alpha)_A}(H^1_{\alpha,A}(r_\alpha.\mu))\cong H^{i-1}_{A}(H^1_{\alpha,A}(r_\alpha.\mu))$.
\end{proof}
%
Furthermore, under the assumption $(\mu,\alpha)\geq 0$  there is some $j\in \bbn$ such that $H_A^j(\mu)$ is not torsion module. Then this $j$ is unique and $H^{j+1}(r_\alpha.\mu)$ is not a torsion module.
 By Lemmas \ref{lem: basic compu on sim ev root} and \ref{lem: furth comput evenro} we have the following homomorphisms
\begin{align}\label{eq: even comp over A}
\tilde T(r_\alpha.\mu): H^{j+1}_A(r_\alpha.\mu)\rightarrow H^j_A(\mu)
\end{align}
and
$$\tilde T(\mu): H^{j}_A(\mu)\rightarrow H^{j+1}_A(r_\alpha.\mu)$$
such that the composite of them are the multiplication by $(\mu,\alpha)!$.

\subsubsection{} Choose a reduced expression $w_\bz=r_{\alpha_N}r_{\alpha_{N-1}}\cdots r_{\alpha_1}$ for all $\alpha_i\in \Pi_\bz$ with $l(w_{\bz})=N:=|\Phi^+_\bz|$. For  $\lambda\in X^+(T)$  and  any $i$ we have the following homomorphism
$$\tilde T_{\alpha_i}(r_{\alpha_i}r_{\alpha_{i-1}}\cdots r_{\alpha_1}.\lambda):H^i_A(r_{\alpha_i}r_{\alpha_{i-1}}\cdots r_{\alpha_1}.\lambda)\rightarrow H^{i-1}_A(r_{\alpha_{i-1}}\cdots r_{\alpha_1}.\lambda).
 $$

 Set  $H^i_\bbk(\mu):= H^j_A(\mu)\otimes_A \bbk$.
 By composing  all these homomorphisms we have
 \begin{equation}\label{eq: even comp A}
 \widetilde T_{A,w_{\bar0}}: H_A^{l(w_\bz)}(w_{\bz}.\lambda)  \rightarrow H_A^0(\lambda).
  \end{equation}
Set $H_\bbk^i(\lambda)=H_A^i(\lambda)\otimes_A \bbk$. Using the arguments in \S\ref{sec: compose scalar} we further have the following observation.


\begin{lemma}\label{lem: K-iso}
 Suppose $\bbk$ is of characteristic $0$. Then the natural extension of $T_{A,w_\bz}$
  \begin{equation}\label{eq: even comp K}\widetilde T_{\bbk,w_{\bar0}}: H_\bbk^{l(w_\bz)}(w_{\bz}.\lambda)  \rightarrow H_\bbk^0(\lambda).
  \end{equation}
  is a $G_\bbk$-module isomorphism.
  \end{lemma}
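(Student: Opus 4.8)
The plan is to build $\widetilde T_{\bbk,w_{\bar 0}}$ as a composite of the base-changed homomorphisms from \S\ref{sec: compose scalar}, one for each simple reflection in the chosen reduced expression $w_\bz = r_{\alpha_N}\cdots r_{\alpha_1}$, and to show each such factor becomes an isomorphism after tensoring with $\bbk$. First I would set $\mu_0 = \lambda$ and $\mu_j = r_{\alpha_j}\mu_{j-1}$, so that $\mu_N = w_\bz\lambda$ and $w_\bz.\lambda = \mu_N - \rho$; the relevant dot-action weights are $w_{j}.\lambda := r_{\alpha_j}r_{\alpha_{j-1}}\cdots r_{\alpha_1}.\lambda$. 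The crucial numerical point, exactly as in \cite[\S{II.8.16}]{Jan3}, is that because $\lambda \in X^+(T)$ is dominant, at every stage of the reduced-expression descent one has $(w_{j-1}.\lambda + \rho_\bz,\alpha_j)\geq 0$ (this is the standard fact that the partial products of a reduced word applied to a dominant weight stay on the correct side of each successive simple root), so Lemma \ref{lem: basic compu on sim ev root}(2) and Lemma \ref{lem: furth comput evenro} apply at each step.

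Next I would invoke, for each $j = 1,\ldots,N$, the homomorphism of \eqref{eq: even comp over A} in the form
$$\tilde T_{\alpha_j}(w_j.\lambda): H^{j}_A(w_j.\lambda)\rightarrow H^{j-1}_A(w_{j-1}.\lambda),$$
together with its partner $\tilde T(w_{j-1}.\lambda)$ going the other way, whose composite (in either order) is multiplication by $(w_{j-1}.\lambda+\rho_\bz,\alpha_j)!\,$. Composing the $\tilde T_{\alpha_j}$ over all $j$ gives precisely $\widetilde T_{A,w_{\bar 0}}$ of \eqref{eq: even comp A}, and composing the partners gives a map $H^0_A(\lambda)\to H^{l(w_\bz)}_A(w_\bz.\lambda)$; the two composites are, in each order, multiplication by the integer $c := \prod_{j=1}^N (w_{j-1}.\lambda+\rho_\bz,\alpha_j)!$. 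Now I base-change along $A\hookrightarrow\bbk$, using $H^i_\bbk(\mu) = H^i_A(\mu)\otimes_A\bbk$: since $\mathrm{char}(\bbk)=0$, each factorial $(w_{j-1}.\lambda+\rho_\bz,\alpha_j)!$ is a nonzero element of $\bbk$, hence $c$ is a unit in $\bbk$. Therefore $\widetilde T_{\bbk,w_{\bar 0}}\circ(\text{partner})_\bbk$ and $(\text{partner})_\bbk\circ\widetilde T_{\bbk,w_{\bar 0}}$ are both multiplication by the unit $c$, so $\widetilde T_{\bbk,w_{\bar 0}}$ is an isomorphism of $\bbk$-vector spaces; that it is $G_\bbk$-equivariant is built into the construction, since every $\tilde T_{\alpha_j}$ is a $G_A$-module map by Lemma \ref{lem: basic compu on sim ev root}(2.c) (applied via $R^\bullet\texttt{ind}^{G_A}_{P(\alpha_j)_A}$ and Lemma \ref{lem: furth comput evenro}).

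The main obstacle I anticipate is bookkeeping rather than conceptual: one must be careful that at the $j$-th step the cohomological degree really does jump by exactly one and that the vanishing $H^i_{\alpha_j,A}(w_{j-1}.\lambda)=0$ for $i\neq 0$ (resp. $H^i_{\alpha_j,A}(w_j.\lambda)=0$ for $i\neq 1$) holds, which is where the inequality $(w_{j-1}.\lambda+\rho_\bz,\alpha_j)\geq 0$ is indispensable — if some partial weight landed with pairing $-1$ the whole cohomology would vanish. This is handled exactly as in the reductive case \cite[\S{II.8.13, II.8.16}]{Jan3}, the only new input being that the ``super'' parts factor out harmlessly: by Theorem \ref{thm: Shi thm}(2) every $H^i_A(\mu)$ is, as an $A$-module with $T$-action, $H^i_{(G_\ev)_A,(B_\ev)_A}(\mu)\otimes\bwedge(\ggg/\bbb^-)^*_\bo$, so the torsion-freeness and rank computations reduce verbatim to the reductive group case over $A$, and the scalar $c$ is literally the one produced there. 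Hence no genuinely new estimate is needed; the statement follows by assembling Lemmas \ref{lem: basic compu on sim ev root}, \ref{lem: furth comput evenro}, and \ref{lem: basic cohom comput} along the reduced expression and base-changing to $\bbk$.
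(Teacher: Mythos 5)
Your argument is correct and is essentially the argument the paper intends (the paper states the lemma immediately after \S\ref{sec: compose scalar} with only the remark ``Using the arguments in \S\ref{sec: compose scalar}\ldots'', leaving exactly the verification you supply): compose the partner maps of \eqref{eq: even comp over A} along the reduced word, observe that the two composites are multiplication by a product of factorials of non-negative integers, and base-change to $\bbk$ where that scalar is a unit. The only small slip is cosmetic: the scalar produced by Lemma \ref{lem: basic compu on sim ev root}(2.c) at the $j$th stage is $(w_{j-1}.\lambda,\alpha_j)!$, not $(w_{j-1}.\lambda+\rho_\bz,\alpha_j)!$; since the reduced-expression bookkeeping gives $(w_{j-1}.\lambda+\rho,\alpha_j)>0$ and hence $(w_{j-1}.\lambda,\alpha_j)\geq 0$, the factorial is still a positive integer and the conclusion is unaffected.
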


\subsubsection{} Now we turn back to the base field $\tbk$.  The following result holds.
\begin{lemma}\label{lem: even comp nonzero} Suppose $\lambda\in X^+(T)$. Then the composed homomorphism
 \begin{equation}\label{eq: even comp over k}\widetilde T_{\tbk,w_{\bar0}}: V(\lambda)  \rightarrow H^0(\lambda)
  \end{equation}
  is nonzero.
\end{lemma}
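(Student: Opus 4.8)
The plan is to derive the non-vanishing from the characteristic-zero isomorphism of Lemma \ref{lem: K-iso} by a lattice-and-reduction argument, tracking the highest weight line through the chain of even reflection homomorphisms. First I would work over the PID $A$ with $\mathrm{char}(\bbk)=0$ and, after localizing $A$ at a suitable prime, arrange that $\tbk$ is a residue field, so that the map $\widetilde T_{A,w_{\bar0}}$ of (\ref{eq: even comp A}) specializes to $\widetilde T_{\tbk,w_{\bar0}}$ of (\ref{eq: even comp over k}) modulo the maximal ideal $\mathfrak m$ and to the isomorphism $\widetilde T_{\bbk,w_{\bar0}}$ over $\bbk$. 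Using the $T$-module decomposition of Theorem \ref{thm: Shi thm}(2), i.e. $H^i_A(\mu)\cong R^i\texttt{ind}^{(G_\ev)_A}_{(B_\ev)_A}(A_\mu)\otimes\bwedge(\ggg\slash\bbb^-)^*_\bo$ and the analogous decomposition for $V_A(\lambda)$ and for the intermediate parabolic cohomology of Lemma \ref{lem: basic cohom comput}, one checks that all the modules occurring are $A$-torsion free, hence $A$-free, and that sheaf cohomology commutes with the base changes $A\to\bbk$ and $A\to\tbk$. In particular $V_A(\lambda)$ and $H^0_A(\lambda)$ are $A$-free of the common rank $\dim_\bbk V_\bbk(\lambda)$ and $\widetilde T_{A,w_{\bar0}}$ becomes an isomorphism after $-\otimes_A\bbk$, hence is injective with $A$-torsion cokernel.

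The heart of the argument is to examine $\widetilde T_{A,w_{\bar0}}$ on the $\lambda$-weight space. A weight comparison in $V_A(\lambda)\cong V_{\ev,A}(\lambda)\otimes\bwedge(\ggg\slash\bbb^-)^*_\bo$ — the odd factor has all weights $\le 0$ (sums of the $-\beta_i$), the even Weyl module has highest weight $\lambda$ — forces $V_A(\lambda)_\lambda=V_{\ev,A}(\lambda)_\lambda\otimes\wedge^0$, a free rank-one $A$-module, and likewise $H^0_A(\lambda)_\lambda\cong H^0_{\ev,A}(\lambda)_\lambda$ is $A$-free of rank one. Now each even reflection homomorphism $\widetilde T_{\alpha_i}(r_{\alpha_i}\cdots r_{\alpha_1}.\lambda)$ entering the composite $\widetilde T_{A,w_{\bar0}}$ is obtained by applying $\texttt{ind}^{G_A}_{P(\alpha_i)_A}$ to the $P(\alpha_i)_A$-map $T_{\alpha_i}(-)$ of Lemma \ref{lem: basic compu on sim ev root}(2.c), and on the even tensor factor this is exactly the corresponding map for the reductive group $G_\ev$, tensored with the identity on $\bwedge(\ggg\slash\bbb^-)^*_\bo$. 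Since along the reduced expression $w_{\bar0}=r_{\alpha_N}\cdots r_{\alpha_1}$ the $\lambda$-weight line is carried through the extremal end of each $\alpha_i$-string, Lemma \ref{lem: basic compu on sim ev root}(2.c) shows that on these lines the transition maps act by the extremal binomial coefficients $\binom{s}{0}=\binom{s}{s}=1$. This is precisely the computation of \cite[\S{II.8}]{Jan3}: the composite $\widetilde T_{A,w_{\bar0}}$ restricted to $V_A(\lambda)_\lambda\to H^0_A(\lambda)_\lambda$ agrees, up to a unit of $A$, with the canonical map $V_{\ev,A}(\lambda)\to H^0_{\ev,A}(\lambda)$ on highest weight lines, hence is multiplication by a unit of $A$.

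Granting this, the conclusion is immediate: a generator $v_\lambda$ of $V_A(\lambda)_\lambda$ satisfies $\widetilde T_{A,w_{\bar0}}(v_\lambda)=u\,e_\lambda$ with $u\in A^\times$ and $e_\lambda$ a generator of $H^0_A(\lambda)_\lambda$; reducing modulo $\mathfrak m$ gives $\widetilde T_{\tbk,w_{\bar0}}(\overline{v_\lambda})=\bar u\,\overline{e_\lambda}\ne 0$ in $H^0(\lambda)_\lambda$, where $\overline{v_\lambda}\ne 0$ in $V(\lambda)_\lambda$ because $V(\lambda)$ has a one-dimensional $\lambda$-weight space (Lemma \ref{lem9.1} and the decomposition (\ref{eq: T-module decomp})). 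Hence $\widetilde T_{\tbk,w_{\bar0}}\colon V(\lambda)\to H^0(\lambda)$ is nonzero, as claimed.

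The step I expect to be the main obstacle is the bookkeeping asserting that the $\lambda$-weight line sits at the end of each $\alpha_i$-string met along the reduced word, so that the relevant coefficient is the extremal $\binom{s}{0}$ or $\binom{s}{s}$ (equal to $1$) rather than an interior binomial coefficient that could vanish modulo $p$; this is the content of the reduced-expression combinatorics and the positivity of the pairings $(r_{\alpha_{i-1}}\cdots r_{\alpha_1}.\lambda,\alpha_i)$, handled exactly as in \cite[\S{II.8}]{Jan3}, with nothing new in the super case since the odd tensor factor $\bwedge(\ggg\slash\bbb^-)^*_\bo$ is inert on the $\lambda$-weight line. A secondary point to verify carefully is the flatness/base-change compatibility of all the intermediate higher cohomology groups over $A$, which again reduces to the reductive-group statements via Theorem \ref{thm: Shi thm}(2) and Lemma \ref{lem: basic cohom comput}.
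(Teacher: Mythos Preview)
Your argument is correct and its core idea coincides with the paper's: the nonvanishing comes from the purely even situation, because the $\lambda$-weight line of $V(\lambda)$ sits in the even tensor factor $V_\ev(\lambda)\otimes\wedge^0$ under the decomposition of Theorem~\ref{thm: Shi thm}(2), and on that line the super map agrees with the classical map for $G_\ev$.

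The paper packages this more efficiently. Rather than working over the PID $A$, localizing, and tracking the extremal binomial coefficients through the reduced expression, it observes directly over $\tbk$ that, by Theorem~\ref{thm: Shi thm}(1)--(2), $V_\ev(\lambda)$ and $H^0_\ev(\lambda)$ sit inside $V(\lambda)$ and $H^0(\lambda)$ as $G_\ev$-submodules in a way compatible with $\widetilde T_{\tbk,w_{\bar0}}$, and then simply cites the known nonvanishing of the composite $V_\ev(\lambda)\to H^0_\ev(\lambda)$ from \cite[\S{II}]{Jan3}. This sidesteps the two points you flag as potential obstacles: the base-change and torsion-freeness bookkeeping for the intermediate cohomology groups over $A$ never enters, and the ``extremal binomial'' computation is absorbed into the cited reductive-group result rather than redone. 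Your route has the advantage of being self-contained and explaining \emph{why} the even result holds (it is indeed the unit-coefficient-on-the-highest-weight-line phenomenon of \cite[\S{II.8}]{Jan3}), at the cost of that extra bookkeeping; the paper's route is shorter but leans more heavily on the citation.
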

\begin{proof} From Lemma \ref{lem: basic cohom comput} and the result on the composed homomorphism from  Weyl module $V_\ev(\lambda):=H^{l(w_{\bar 0})}_\ev(G/B, \mathscr{L}(\textbf k_{w_{\bar 0}.\lambda}))$ to $H^0_\ev(\lambda)$ is nonzero  (see \cite[\S{II}.6.16]{Jan3}). On the other side, by Theorem \ref{thm: Shi thm}(1), $V_\ev(\lambda)$ and $H^0_\ev(\lambda)$ are $G_\ev$-submodules of $V(\lambda)$ and $H^0(\lambda)$, respectively. So the lemma can be deduced from Theorem \ref{thm: Shi thm}(2).
\end{proof}

\begin{remark}  In general, the image of $\widetilde T_{\tbk,w_{\bar0}}$  contains $L(\lambda)$ as a proper submodule. It will be seen that both of them coincide  if and only if $\lambda$ is $p$-typical (see Theorem \ref{thm: p-typical}).
\end{remark}

\subsection{} Furthermore,  by (\ref{e6.5}) and (\ref{eq: even comp A}) we then have \begin{equation}\label{w01}
 \widetilde T_{A,\widehat w_\ell}:=\widetilde T_{A, \bar{w}_{\bo}}\circ \widetilde T_{A, w_{\bar 0}}: H_A^{l(w_{\bar 0})}(w_{\bar 0}.\lambda)\rightarrow   H^0_{\texttt{total},A}(\lambda)=H_A^0(G_A\slash B^{mn}, \lambda_{mn}).
 \end{equation}

\begin{prop}\label{c9.2} Let $\lambda\in X^+(T)$ be a typical weight. The following statements hold.
\begin{itemize}
\item[(1)] The composite $\widetilde T_{\textbf k,\widehat w_\ell}$ is a nonzero homomorphism which maps the   head of $V(\lambda)$ onto the socle of $\total=H^0(G/B^{(mn)},{\lambda_{mn}})$, both of which are  isomorphic to $L(-w_{\bar 0}\lambda+2\rho_{\bar 1})^*$.
    \item[(2)] The image $\widetilde T_{\textbf k,\widehat w_\ell}$ is exactly the simple socle of $\total$.
        \end{itemize}
\end{prop}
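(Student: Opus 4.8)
The plan is to assemble the proof from the pieces already established in the paper, treating the composite $\widetilde T_{\tbk,\widehat w_\ell} = \widetilde T_{\tbk,\bar w_\bo}\circ\widetilde T_{\tbk,w_{\bar 0}}$ one factor at a time and then passing from the generic fibre over $\bbk$ to the special fibre over $\tbk$ via reduction mod the uniformizer of the PID $A$. First I would recall the three layers of information: (a) Lemma \ref{lem: K-iso} says $\widetilde T_{\bbk,w_{\bar 0}}$ is an isomorphism when $\text{char}(\bbk)=0$; (b) Lemma \ref{lem: K-iso Odd} says $\widetilde T_{K,w_{\bar 1}}$ is a $\bbk$-isomorphism for typical $\lambda$ when $\text{char}(\bbk)=0$; hence the composite $\widetilde T_{\bbk,\widehat w_\ell}$ is a $\bbk$-isomorphism. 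On the other hand, over $\tbk$, Lemma \ref{lem: even comp nonzero} gives that $\widetilde T_{\tbk,w_{\bar 0}}\colon V(\lambda)\to H^0(\lambda)$ is nonzero, and Proposition \ref{prop: odd comp nonzero} gives that $\widetilde T_{\tbk,w_{\bar 1}}\colon H^0(\lambda)\to\total$ is nonzero. The issue is that a composite of nonzero maps need not be nonzero, so these two facts alone do not suffice; this is where the ${U^{(mn)}}^+$-fixed-point argument does the work.

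For part (1): I would argue that $\widetilde T_{\tbk,\widehat w_\ell}$ is nonzero by tracking the one-dimensional ${U^{(mn)}}^+$-fixed-point (equivalently, ${B^{(mn)}}^+$-highest weight $\lambda_{mn}$) spaces through the chain of maps. By Lemma \ref{lem9.1}(2)–(3), $V(\lambda)^*\cong\calh$ has a one-dimensional $B^+$-highest weight space; dually $V(\lambda)$ has a one-dimensional $\lambda_{mn}$-weight space spanning (a complement to the radical in) its head, and by Lemma \ref{lem: U fixed}-type reasoning this is a ${U^{(mn)}}^+$-fixed line. The real-reflection factor $\widetilde T_{\tbk,w_{\bar 0}}$, being nonzero with image containing $L(\lambda)$ (the remark after Lemma \ref{lem: even comp nonzero}), sends this line nontrivially into $H^0(\lambda)$; then Proposition \ref{prop: odd comp nonzero} together with Lemma \ref{lem: U fixed} shows the odd-reflection chain $\widetilde T_{\tbk,w_{\bar 1}}$ preserves the one-dimensional ${U^{(mn)}}^+$-fixed spaces $H^0(G/B^{(i)},\lambda_i)_{\lambda_{mn}}$ at every stage, hence carries this line nontrivially into $\total$. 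Therefore $\widetilde T_{\tbk,\widehat w_\ell}\ne 0$. Since the head of $V(\lambda)$ is simple and isomorphic to $L(-w_{\bar 0}\lambda+2\rho_{\bar 1})^*$ (Lemma \ref{lem9.1}(3)), and by Lemma \ref{lem8.2} this is also the socle of $\total$, a nonzero $G$-homomorphism out of $V(\lambda)$ must kill the radical (it cannot be nonzero on any composition factor other than the head, else it would be nonzero on the whole maximal submodule and the image would have a larger socle than $L(-w_{\bar 0}\lambda+2\rho_{\bar 1})^*$, which cannot embed in $\total$ whose socle is simple). Hence $\widetilde T_{\tbk,\widehat w_\ell}$ factors through the head and lands in the socle of $\total$; by simplicity of both it is an isomorphism head $\xrightarrow{\sim}$ socle.

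Part (2) is then immediate from part (1): the image of $\widetilde T_{\tbk,\widehat w_\ell}$ equals the image of the induced map on the head, which is an isomorphic copy of $L(-w_{\bar 0}\lambda+2\rho_{\bar 1})^*$ sitting inside $\total$, and this copy must be contained in the socle; since $\total$ has simple socle equal to $L(-w_{\bar 0}\lambda+2\rho_{\bar 1})^*$ (Corollary \ref{coro: total socle}(2) and Lemma \ref{lem8.2}), the image is exactly the socle. The main obstacle I anticipate is the nonvanishing of the composite: verifying that the highest-weight line of $V(\lambda)$ is genuinely a ${U^{(mn)}}^+$-fixed vector (not just a $B^+$-highest weight vector for the standard Borel) and that $\widetilde T_{\tbk,w_{\bar 0}}$ does not annihilate it — here one uses that the image of $\widetilde T_{\tbk,w_{\bar 0}}$ is a nonzero submodule of $H^0(\lambda)$ whose own socle contains $L(\lambda)$, and that the $\lambda_{mn}$-weight space is recovered as a ${U^{(mn)}}^+$-fixed space by the dimension-count of Lemma \ref{lem: U fixed}. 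Once that is pinned down, the rest is formal from simplicity of heads and socles.
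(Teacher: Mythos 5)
The nonvanishing part of your argument is essentially on track, if a bit roundabout. The paper does not need to locate a $\lambda_{mn}$-weight line inside $V(\lambda)$ at all: it simply notes that $\texttt{im}(\widetilde T_{\tbk,w_{\bar 0}})$ is a nonzero $G$-module, hence has a nonzero ${U^{(mn)}}^+$-fixed subspace (unipotence), and since this sits inside the one-dimensional $H^0(\lambda)^{{U^{(mn)}}^+}=H^0(\lambda)_{\lambda_{mn}}$ of Lemma \ref{lem: U fixed}, the image must contain all of it; the odd chain then preserves this line by Proposition \ref{prop: odd comp nonzero}. Your closing sentence gestures at exactly this, so I would drop the earlier paragraph about $V(\lambda)$'s internal $\lambda_{mn}$-line, which would need a separate analogue of Lemma \ref{lem: U fixed} for $V(\lambda)$ that you have not established.

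The genuine gap is in the claim that ``a nonzero $G$-homomorphism out of $V(\lambda)$ must kill the radical.'' This is false as stated, and the supporting reasoning does not hold up: any nonzero submodule of $\total$ automatically has socle equal to the simple $\texttt{soc}(\total)\cong L(\gamma)^*$, so the image never has ``a larger socle'' regardless of whether the radical is killed. In general a module can have simple head $S$, simple socle $S$, and still be nonsimple (a uniserial module with factors $S,T,S$, say), so ``by simplicity of both it is an isomorphism head $\to$ socle'' is not a valid inference. What closes this gap — and what the paper uses — is the observation that $L(\gamma)^*\cong L_{\bmn}(\lambda_{mn})$ occurs with multiplicity one in a composition series of $\total$, because its $B^{(mn)+}$-highest weight $\lambda_{mn}$ has a one-dimensional weight space in $\total$ (Lemma \ref{lem9.1}(2)). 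Setting $M=\texttt{im}(\widetilde T_{\tbk,\widehat w_\ell})$, one then has $\texttt{soc}(M)\cong L_{\bmn}(\lambda_{mn})$ (as $M\subseteq\total$) and $M/\texttt{rad}(M)\cong L_{\bmn}(\lambda_{mn})$ (as $M$ is a nonzero quotient of $V(\lambda)$). If $M$ were not simple, $L_{\bmn}(\lambda_{mn})$ would occur at least twice in $M\subseteq\total$, contradicting multiplicity one. Hence $M$ is simple and equals $\texttt{soc}(\total)$, which gives both that the map factors through the head and statement (2). You need to add this multiplicity-one step explicitly; without it the conclusion does not follow.
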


\begin{proof}
(1) By Lemma \ref{lem: even comp nonzero},  $T_{\tbk,w_{\bar0}}$ is nonzero.
Hence  $\texttt{im}(T_{\tbk,w_{\bar0}})^{{U^{(mn)}}^+}$ is nonzero, which by Lemma \ref{lem: U fixed}, coincides with $H^0(\lambda)^{{U^{(mn)}}^+}$. Furthermore, $H^0(\lambda)^{{U^{(mn)}}^+}$ is identical to the one-dimensional weight space $H^0(\lambda)_{\lambda_{mn}}$ . Combining with Proposition \ref{prop: odd comp nonzero} and its proof, we have that $\widetilde T_{\textbf k,\widehat w_\ell}$ must a nonzero homomorphism.

(2) Thanks to  Corollary \ref{coro: total socle}(2), the irreducible head of $V(\lambda)$  is isomorphic to $L(\gamma)^*$ which is exactly isomorphic to $L_{\bmn}(\lambda_{mn})=\texttt{soc}(\total)$.
By  Lemma \ref{lem9.1}(2), the ${B^{(mn)}}^+$-highest weight space is exactly $\total^{{B^{(mn)}}^+}$ which is one-dimensional, and generates $L_{\bmn}(\lambda_{mn})=\texttt{soc}(\total)$. Hence $L_{\bmn}$ is a multiplicity-one composition factor of $\total$. Set $M$ to be the image of $\widetilde T_{\textbf k,\widehat w_\ell}$. One side, $\texttt{soc}(M)=\texttt{soc}(\total)=L_{\bmn}(\lambda_{mn})$. On the other side, $M$ contains all composition factors of $M\slash \texttt{rad}(M)\cong V\slash \texttt{rad}(V(\lambda))\cong L_{\bmn}(\lambda_{mn})$. The multiplicity freeness  of $L_{\bmn}(\lambda_{mn})$ in the composition factors of $\total$ yields that $M$ is exactly $L_{\bmn}(\lambda_{mn})$.

The proof is completed.
\end{proof}

\section{Jantzen filtration of Weyl modules and sum formulas}

\subsection{General construction of Jantzen filtration}\label{sec: Gen Jan Fil}
In this subsection, we recall some general construction of Jantzen filtration for the readers's convenience. More details can be referred  to \cite[\S{II.8}]{Jan3}.


\subsubsection{} Keep the notations as in \S\ref{sec: conv-2} (and  as before as well). In particular, let $A$ be a principal ideal domain and $\bbk$ its filed fractional filed. Take $\frak{p}$ a maximal ideal of $A$. Let $\nu_{\frak p}$ be the ${\frak p}$-adic valuation of $\bbk$. If nonzero element $a\in A$, $\nu_{\frak p}(a)=r$ if and only if $a\in {\frak p}^r\backslash {\frak p}^{r+1}$.

For any $A$-module $M$ we can talk about the torsion submodule $M_{\text{tor}}$. Set $M_{\text{fr}}=M\slash M_{\text{tor}}$ the torsion free quotient. Then $M_{\text{fr}}$ is a projective $A$-module if $M$ is finitely generated. If $M$ is further a $G_A$-module, or to say, an object in the category $G_A\hmod$, we have $G_A$-submodule $M_{\text{tor}}$ and $G_A$-quotient module $M_{\text{fr}}$. If $M$ is finitely generated over $A$, then $\texttt{ch}(M_{\text{fr}})$ can be defined and coincides with  $\texttt{ch}(M\otimes_A \bbk)$.

In particular, with taking $A=\bbz$  we have  that $V_\bbz(\lambda)_{\text{fr}}\otimes_\bbz\tbk$ is isomorphic to a subquotient of $V(\lambda)$. Furthermore, for any commutative $\bbz$-algebra $A'$, we have $V_{A'}(\lambda)\cong V_\bbz(\lambda)\otimes_\bbz A'$.
In particular,   as $V(\lambda)\cong V_{\bbf_p}(\lambda)\otimes_{\bbf_p}\tbk$, we have
\begin{align}\label{eq: char base change-1}
\texttt{ch}V(\lambda)=\texttt{ch}V_{\bbf_p}(\lambda)=
\texttt{ch}V_{\bbz}(\lambda)_{\text{fr}}.
\end{align}

\subsubsection{}\label{8.4}
Set $\nu_\ppp(N)$ to be the length of the $A_\ppp$-module $N_\ppp=N\otimes_A A_\ppp$ for any $A$-module $N$.
 For $M\in G_{A}\hmod$, set
 \begin{equation}\nu_{\frak p}^c(M)=\sum\limits_{\mu}\nu_{\frak p}(M_\mu)e(\mu).
 \end{equation}

Let $M$ and $M'$ be torsion free $A$-modules in  $G_A\hmod$.   Consider a homomorphism  $\psi: M\rightarrow M'$ in $G_A\hmod$ which satisfies  $\psi\otimes \bbk:M\otimes_A \bbk\cong M'\otimes_A \bbk$.
Then we have  $\texttt{coker}(\psi)=M'/\psi(M)\in  G_{A}\hmod$ is a finitely generated $A$-torsion module.

Set \begin{equation}\label{s10.1'}\nu_{\frak p}(\psi)=\nu_{\frak p}(\texttt{coker}(\psi)) \hbox{ and } \nu^c_{\frak p}(\varphi)=\nu^c_{\frak p}(\texttt{coker}(\psi)).\end{equation}

Note that $\overline\psi: \overline M\rightarrow \overline M'$ is the homomorphism induced by $\varphi$ where $\overline M=M/\ppp M$ and $\overline M'=M'/\ppp M'$. Set
$$M^i:=\{m\in M|\psi(m)\in  \ppp^iM'; \forall i\in \mathbb N\}.$$
 Set $\overline M^i$ to be the image of $M^i$ in $\overline M$.
Then all $M^i$ are $G_A$-submodules of $M$ and all $\overline M^i$ are $G_{A\slash \ppp}$-submodule of $\overline M$.  Furthermore, we have  (cf. \cite[II.8.18]{Jan3})
\begin{align}\label{eq: image iso}
 \overline M\slash\overline M^1\cong \texttt{im}\overline\psi
\end{align}

\begin{equation}\label{s10.1}\overline M^1=\texttt{ker}\overline\psi.
\end{equation}
And
{ \begin{equation}\label{eq: filt sum}\sum\limits_{i>0}\texttt{ch}(\overline {M^i})=\nu^c_{\frak p}(\psi).
\end{equation}}

\subsubsection{}\label{sec: 8.13}
Suppose there  are two homomorphism $\varphi$ and $\varphi': M'\rightarrow M''$ in $G_A\hmod$ satisfying the above assumptions. Consider $\psi=\varphi'\circ\varphi: M\rightarrow M''$.

By the same arguments as in \cite[\S{II}.8.11]{Jan3} we have
 \begin{equation}\label{eq: coch sum}
 \nu^c_\frak p(\varphi'\circ\varphi)=\nu^c_\frak p(\varphi')+\nu^c_\frak p(\varphi).
 \end{equation}

In order to  make a distinction between different filtrations arising from different homomorphisms, we adopt an additional subscript like
$\{M^i_\varphi\}$ for the above filtration associated with $\varphi$. Then by  (\ref{eq: filt sum}) and (\ref{eq: coch sum})
 we have the following observation.

\begin{lemma}\label{lem: fil decomp}
$\sum\limits_{i>0}\texttt{ch}(\overline {M_\psi^i})=
\sum\limits_{i>0}\texttt{ch}(\overline {M_\varphi^i})+\sum\limits_{i>0}\texttt{ch}(\overline {{M'}_{\varphi'}^i})$.
\end{lemma}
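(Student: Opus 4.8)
\textbf{Proof proposal for Lemma \ref{lem: fil decomp}.}

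The plan is to deduce the identity directly from the two displayed facts quoted just above the statement, namely the sum formula $\sum_{i>0}\texttt{ch}(\overline{M^i})=\nu^c_{\frak p}(\psi)$ in \eqref{eq: filt sum} and the additivity of the co-character valuation $\nu^c_{\frak p}(\varphi'\circ\varphi)=\nu^c_{\frak p}(\varphi')+\nu^c_{\frak p}(\varphi)$ in \eqref{eq: coch sum}. First I would record that $\psi=\varphi'\circ\varphi$ satisfies the running hypotheses of \S\ref{sec: 8.13}: $M,M',M''$ are torsion-free finitely generated $A$-modules in $G_A\hmod$, and since $\varphi\otimes\bbk$ and $\varphi'\otimes\bbk$ are isomorphisms, so is $\psi\otimes\bbk=(\varphi'\otimes\bbk)\circ(\varphi\otimes\bbk)$. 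Hence $\texttt{coker}(\psi)$, $\texttt{coker}(\varphi)$, $\texttt{coker}(\varphi')$ are all finitely generated torsion $A$-modules, and the filtrations $\{M^i_\psi\}$, $\{M^i_\varphi\}$, $\{{M'}^i_{\varphi'}\}$ and the quantities $\nu^c_{\frak p}$ of each are all well defined.

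Then the argument is a one-line chain of equalities. Applying \eqref{eq: filt sum} to the homomorphism $\psi$ gives
\[
\sum_{i>0}\texttt{ch}(\overline{M^i_\psi})=\nu^c_{\frak p}(\psi)=\nu^c_{\frak p}(\varphi'\circ\varphi).
\]
By \eqref{eq: coch sum} the right-hand side equals $\nu^c_{\frak p}(\varphi')+\nu^c_{\frak p}(\varphi)$. Finally, applying \eqref{eq: filt sum} once to $\varphi\colon M\to M'$ and once to $\varphi'\colon M'\to M''$ yields $\nu^c_{\frak p}(\varphi)=\sum_{i>0}\texttt{ch}(\overline{M^i_\varphi})$ and $\nu^c_{\frak p}(\varphi')=\sum_{i>0}\texttt{ch}(\overline{{M'}^i_{\varphi'}})$, so combining the three identities gives the assertion
\[
\sum_{i>0}\texttt{ch}(\overline{M^i_\psi})=\sum_{i>0}\texttt{ch}(\overline{M^i_\varphi})+\sum_{i>0}\texttt{ch}(\overline{{M'}^i_{\varphi'}}).
\]

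The only genuine content here is \eqref{eq: coch sum}, and the excerpt already licenses it by citing \cite[\S{II}.8.11]{Jan3}; so there is no real obstacle, and the proof is essentially a bookkeeping assembly of results already in hand. If one wanted to be fully self-contained, the point to be careful about — and the closest thing to a "hard part" — would be re-proving \eqref{eq: coch sum} in the super setting, i.e. checking that for composable maps of torsion-free $G_A$-modules inducing isomorphisms over $\bbk$ one has a short exact sequence $0\to \texttt{coker}(\varphi)\to \texttt{coker}(\varphi'\circ\varphi)\to \texttt{coker}(\varphi')\to 0$ of $A$-torsion modules (via the snake lemma applied to $\varphi$, $\varphi'\circ\varphi$, $\varphi'$), and that $\nu^c_{\frak p}$ is additive on short exact sequences of finitely generated torsion $A$-modules because $\nu_{\frak p}$-lengths add along each weight space; but this is exactly the argument of \cite[\S{II}.8.11]{Jan3}, which carries over verbatim since all the maps involved are $T$-equivariant and everything is computed weight space by weight space.
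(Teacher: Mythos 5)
Your proof is correct and follows exactly the paper's own route: apply \eqref{eq: filt sum} to each of $\psi$, $\varphi$, $\varphi'$ and then combine via the additivity \eqref{eq: coch sum}, which the paper likewise imports from \cite[\S{II}.8.11]{Jan3}. The extra remarks you add about verifying the running hypotheses and the snake-lemma justification of \eqref{eq: coch sum} are sound but not a departure from the paper's argument.
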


\subsection{Arguments for the part arising from even reflections}

  From now on, we take $A=\bbz$, and take $\frak{p}=p\bbz$. Then $\bbf_p=A\slash \frak{p}$, and $\tbk=\bbf_p\otimes_\bbz\tbk$.

  Set $\varphi:=\widetilde T_{A,w_{\bar0}}$ in (\ref{eq: even comp A}). We already have the following homomorphism
 \begin{equation*}
 \varphi: H_A^{l(w_\bz)}(w_{\bz}.\lambda)  \rightarrow H_A^0(\lambda).
  \end{equation*}
  By Lemmas \ref{lem: K-iso} and \ref{lem: even comp nonzero}, the construction in \S\ref{sec: Gen Jan Fil}  can be applied.

  \begin{prop}\label{prop: Jan Fil Even}   Let $\lambda\in X^+(T)$.
There is a filtration of $G$-modules
$$V(\lambda)=V_\varphi(\lambda)^0\supset V_\varphi(\lambda)^1\supset\cdots $$ such that the following sum formula holds
\begin{align*}
&\sum\limits_{i>0}\texttt{ch} V_\varphi(\lambda)^i
=\sum\limits_{\alpha\in \Phi^+_{\bz} }\sum\limits_{0<mp<(\lambda+\rho_{\bar0},\alpha)}
\nu_p(mp)\chi (r_{\alpha,mp}.\lambda)
\end{align*}
where $\nu_p(mp)$ means the $p$-adic valuation of $mp$.
\end{prop}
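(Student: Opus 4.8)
The plan is to produce the filtration from the general machinery of \S\ref{sec: Gen Jan Fil} and then reduce the computation of $\nu^c_p$ to the classical Jantzen sum formula for the reductive group $G_\ev$, the odd directions contributing only the scalar factor $\Xi$. First I would instantiate \S\ref{sec: Gen Jan Fil} with $A=\bbz$, $\ppp=p\bbz$, $M=H_\bbz^{l(w_{\bar 0})}(w_{\bar 0}.\lambda)$, $M'=H_\bbz^0(\lambda)$ and $\psi=\varphi$. Here $M$ and $M'$ are finitely generated and torsion free over $\bbz$, hence $\bbz$-free; since $l(w_{\bar 0})$ is the top cohomological degree, $H^{l(w_{\bar 0})+1}_\bbz(w_{\bar 0}.\lambda)=0$, so base change gives $M/pM\cong V_{\bbf_p}(\lambda)$ and $(M/pM)\otimes_{\bbf_p}\tbk\cong V(\lambda)$ (cf. (\ref{eq: char base change-1})). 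By Lemma \ref{lem: K-iso} (over $\bbk=\bbq$), $\varphi\otimes\bbq$ is an isomorphism, and by Lemma \ref{lem: even comp nonzero} the reduction $\overline\varphi$ over $\tbk$ is nonzero, so the hypotheses of \S\ref{8.4} hold. This yields a descending chain of $G$-submodules $V(\lambda)=V_\varphi(\lambda)^0\supset V_\varphi(\lambda)^1\supset\cdots$, and (\ref{eq: filt sum}) gives $\sum_{i>0}\ch V_\varphi(\lambda)^i=\nu^c_p(\varphi)=\nu^c_p(\texttt{coker}\,\varphi)$. It remains to evaluate the right-hand side.

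Next I would identify $\varphi$, over $\bbz$, with the homomorphism of \cite[\S{II.8.16}]{Jan3} for $G_\ev$ twisted by the identity on the odd exterior algebra. Fixing a reduced word $w_{\bar 0}=r_{\alpha_N}\cdots r_{\alpha_1}$ as in \S\ref{sec: 8.3}, the even-reflection chain stays inside $G/B$ throughout (only the weights move). Each elementary factor of $\varphi$ is, by Lemmas \ref{lem: basic compu on sim ev root} and \ref{lem: furth comput evenro}, obtained by applying $\texttt{ind}^{G_\bbz}_{P(\alpha_i)_\bbz}$ to the $P(\alpha_i)_\bbz$-homomorphism $T_{\alpha_i}(\cdots)$; since $\alpha_i$ is an even root, Theorem \ref{thm: zubakov} (Mackey) together with Lemma \ref{lem: basic cohom comput}(1) shows that the underlying data of this $P(\alpha_i)$-homomorphism is purely even, the unipotent radical of $P(\alpha_i)$ acting trivially. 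Feeding this into the $T$-module splitting (\ref{eq: gen coho decom}) (valid over $\bbz$, applied to the single Borel $B$), one obtains over $\bbz$
\[
\varphi=\widetilde T^{\ev}_{\bbz,w_{\bar 0}}\otimes\id_{\bwedge(\ggg\slash\bbb^-)^*_\bo},\qquad \widetilde T^{\ev}_{\bbz,w_{\bar 0}}\colon H^{l(w_{\bar 0})}_{\ev,\bbz}(w_{\bar 0}.\lambda)\longrightarrow H^0_{\ev,\bbz}(\lambda),
\]
where $\widetilde T^{\ev}_{\bbz,w_{\bar 0}}$ is Jantzen's homomorphism for $G_\ev$. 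As $\bwedge(\ggg\slash\bbb^-)^*_\bo$ is $\bbz$-free with $\ch=\Xi$, this gives $\texttt{coker}\,\varphi\cong(\texttt{coker}\,\widetilde T^{\ev}_{\bbz,w_{\bar 0}})\otimes\bwedge(\ggg\slash\bbb^-)^*_\bo$ and hence $\nu^c_p(\varphi)=\nu^c_p(\widetilde T^{\ev}_{\bbz,w_{\bar 0}})\cdot\Xi$ in $\bbz[X(T)]$. (Equivalently, within the paper's step-by-step framework: decompose $\varphi$ into the factors $\widetilde T_{\alpha_i}$, use (\ref{eq: coch sum}) and Lemma \ref{lem: fil decomp} to get $\nu^c_p(\varphi)=\sum_i\nu^c_p(\widetilde T_{\alpha_i})$, compute each $\nu^c_p(\widetilde T_{\alpha_i})$ from the $A$-free bases of Lemma \ref{lem: basic compu on sim ev root} exactly as in \cite[\S{II.8.18}]{Jan3}, and extract the common factor $\Xi$ via Theorem \ref{thm: Shi thm}(2).)

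Then I would invoke the classical sum formula \cite[\S{II.8.18--8.19}]{Jan3} for $G_\ev$. Since $w(\rho_\bo)=\rho_\bo$ for all $w\in W$, the dot action with $\rho$ used in the paper agrees on $W$ with the usual one relative to $\rho_{\bar 0}$, so after matching the normalization of $(\,\cdot\,,\,\cdot\,)$ on each simple factor of $G_\ev$, Jantzen's theorem reads
\[
\nu^c_p\bigl(\widetilde T^{\ev}_{\bbz,w_{\bar 0}}\bigr)=\sum_{\alpha\in\Phi^+_\bz}\ \sum_{0<mp<(\lambda+\rho_{\bar 0},\alpha)}\nu_p(mp)\,\chi_\bz(r_{\alpha,mp}.\lambda).
\]
Multiplying through by $\Xi$ and using $\chi_\bz(\mu)\Xi=\chi(\mu)$ (Proposition \ref{lem: totally ind ch}(2.a)) turns the right-hand side into $\sum_{\alpha\in\Phi^+_\bz}\sum_{0<mp<(\lambda+\rho_{\bar 0},\alpha)}\nu_p(mp)\,\chi(r_{\alpha,mp}.\lambda)$, which together with the first paragraph is precisely the asserted sum formula.

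The step I expect to be the main obstacle is the second one: making the identification $\varphi=\widetilde T^{\ev}_{\bbz,w_{\bar 0}}\otimes\id$ rigorous \emph{integrally}. One must verify that every intermediate cohomology module $H^i_\bbz(r_{\alpha_i}\cdots r_{\alpha_1}.\lambda)$ occurring in the composite is $\bbz$-free of the expected rank, that the splitting (\ref{eq: gen coho decom}) can be chosen compatibly along the whole chain and with the elementary homomorphisms, and that the formation of cokernels commutes with the odd tensor factor---so that $\bwedge(\ggg\slash\bbb^-)^*_\bo$ genuinely remains inert and contributes only the scalar $\Xi$ to $\nu^c_p$. Everything else is either the general formalism of \S\ref{sec: Gen Jan Fil} or a citation of the reductive case.
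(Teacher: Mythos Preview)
Your proposal is correct and follows the same approach as the paper, which in fact gives no detailed proof beyond the preamble asserting that Lemmas \ref{lem: K-iso} and \ref{lem: even comp nonzero} allow the construction of \S\ref{sec: Gen Jan Fil} to be applied. You supply precisely the details the paper omits---verifying the hypotheses of the Jantzen machinery, reducing to $G_\ev$ via the $T$-module splitting (\ref{eq: gen coho decom}), invoking \cite[\S{II.8.19}]{Jan3}, and converting $\chi_\bz\cdot\Xi$ to $\chi$ via Proposition \ref{lem: totally ind ch}(2.a); the integral compatibility you flag as the main obstacle is real but is exactly what the paper (and \cite{Jan3}) take for granted.
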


\subsection{Arguments for the part arising from odd reflections} Set $\varphi'=\widetilde T_{A,{w}_{\bar 1}}$ in (\ref{e6.5}). We already have the following homomorphism
$$ \varphi': H^0_A(\lambda) \rightarrow H^0_{\texttt{total},A}(\lambda).$$
Lemma \ref{lem: K-iso Odd} and Proposition \ref{prop: odd comp nonzero} ensure  the construction in \ref{sec: Gen Jan Fil}.
By (\ref{eq: filt sum}) and (\ref{eq: coch sum}) again, we have
\begin{align}\label{eq: fil odd sum first}
\sum_{i>0}\texttt{ch} V_{\varphi'}(\lambda)^i=\sum_{i} \nu_p^c(\texttt{coker}(\tilde T_{\bk,\beta_i}).
\end{align}
Note that by Lemma \ref{L4.1}(1), $\texttt{coker}(\tilde T_{\bk,\beta_i})=0$ when $(\lambda,\beta_i)\not\equiv 0\mod p$. By Lemma \ref{L4.1}(2), Corollary \ref{cor: odd grod pre} along with \S\ref{sec: character}, Equation (\ref{eq: fil odd sum first}) gives rise to the following formula.

\begin{align}\label{eq: Jan Fil Odd}
\sum_{i>0}\texttt{ch} V_{\varphi'}(\lambda)^i
=&\sum\limits_{\beta_i\in \Phi^+_{\bo};\; p\mid(\lambda_{i-1},\beta_i)}
(\texttt{ch}(H^0(G\slash K_{\beta_i}, \lambda_{i-1}))+\cr
&+\sum_{k=1}^\infty(-1)^k \texttt{ch}(H^0(G\slash K_{-\beta_i}, \lambda_{i-1}+k\beta_i))).
\end{align}

Set $\scrW(\lambda):=\texttt{ch}(H^0_\ev(\lambda))$ which is computed via the Weyl character formula. Denote
$\Xi_{i}=\Pi_{\beta\in (\Phi^+_{\beta_i})_\bo}(1+e^{-\beta})$.
Note that $K_{\mp\beta_i}^+$ corresponds to the positive root systems $\Phi^+_{\beta_{i-1}}$ and $\Phi^+_{\beta_i}$ respectively. Here $\Phi^+_{\beta_{i-1}}$ when $i=1$ just stands for the standard positive root system $\Phi^+$.
Keep it in mind that $(\lambda_{i-1},\beta_i)\equiv0\mod p$ is equivalent to $(\lambda+\rho,\beta_i)\equiv0\mod p$.
 Theorem \ref{thm: Shi thm}(2) and (\ref{eq: Jan Fil Odd}) give rise to the following result.
\begin{prop}\label{prop: Jan Fil Odd} Keep the notations and assumptions as above. Then the following formula holds
\begin{align*}
\sum_{i>0}\texttt{ch} V_{\varphi'}(\lambda)^i
=\sum\limits_{\beta_i\in \Phi^+_{\bar 1};\; p\mid(\lambda+\rho,\beta_i)}
(\Xi_i\scrW(\lambda_{i-1})+\Xi_{i-1}\sum_{k>0}(-1)^k\scrW(\lambda_{i-1}+k\beta_i)).
\end{align*}
Here we appoint that $\Xi_0$ is just $\Xi$ in Proposition \ref{lem: totally ind ch}.
\end{prop}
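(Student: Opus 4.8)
The plan is to unwind Proposition \ref{prop: Jan Fil Odd} directly from the character identity \eqref{eq: Jan Fil Odd}, by converting each of the cohomology-character terms $\texttt{ch}(H^0(G\slash K_{\pm\beta_i},\mu))$ into the ``purely even'' shape $\Xi\cdot\scrW$. The starting point is \eqref{eq: fil odd sum first}, which expresses $\sum_{i>0}\texttt{ch} V_{\varphi'}(\lambda)^i$ as a sum of $\nu_p^c(\texttt{coker}(\tilde T_{\bk,\beta_i}))$; combining Lemma \ref{L4.1}(1) (the cokernel vanishes unless $p\mid (\lambda_{i-1},\beta_i)$) with Corollary \ref{cor: odd grod pre} (the Grothendieck-group identity \eqref{eq: Groth eq} for the cokernel when $p\mid(\lambda_{i-1},\beta_i)$) and the fact from \S\ref{sec: character} that $\texttt{ch}$ factors through the Grothendieck group, one obtains \eqref{eq: Jan Fil Odd}. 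So the only remaining work is the evaluation of the characters appearing on the right-hand side of \eqref{eq: Jan Fil Odd}.

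The key computational step is the following. For a Borel $K^+$ corresponding to a positive system $\Phi^+_{\beta}$ whose even part is $\Phi^+_\bz$ and with odd part $(\Phi^+_\beta)_\bo$, Theorem \ref{thm: Shi thm}(2) (the decomposition \eqref{eq: gen coho decom}) gives, as $T$-modules,
$$
H^0(G\slash K^+_\beta,\mu)\;\cong\;H^0_\ev(\mu)\otimes{\bigwedge}^\bullet\bigl(\ggg\slash(\bbb_\beta^-)\bigr)^*_\bo ,
$$
and the character of the odd exterior algebra factor is exactly $\prod_{\gamma\in(\Phi^+_\beta)_\bo}(1+e^{-\gamma})=\Xi_\beta$ in the notation of the proposition. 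Since $H^0_\ev(\mu)$ has character $\scrW(\mu)$ by the Weyl character formula, we get $\texttt{ch}(H^0(G\slash K^+_\beta,\mu))=\Xi_\beta\,\scrW(\mu)$. Applying this with $K_{\beta_i}^+$ (positive system $\Phi^+_{\beta_i}$, odd part-factor $\Xi_i$) to the term $H^0(G\slash K_{\beta_i},\lambda_{i-1})$, and with $K_{-\beta_i}^+$ (positive system $\Phi^+_{\beta_{i-1}}$, factor $\Xi_{i-1}$) to each term $H^0(G\slash K_{-\beta_i},\lambda_{i-1}+k\beta_i)$, and substituting into \eqref{eq: Jan Fil Odd}, yields precisely
$$
\sum_{i>0}\texttt{ch} V_{\varphi'}(\lambda)^i=\sum_{\beta_i\in\Phi^+_\bo;\;p\mid(\lambda_{i-1},\beta_i)}\Bigl(\Xi_i\scrW(\lambda_{i-1})+\Xi_{i-1}\sum_{k>0}(-1)^k\scrW(\lambda_{i-1}+k\beta_i)\Bigr).
$$
Finally one rewrites the summation condition $p\mid(\lambda_{i-1},\beta_i)$ as $p\mid(\lambda+\rho,\beta_i)$ using \eqref{eq: typical eq}, and notes the bookkeeping convention $\Xi_0=\Xi$ (the standard case $i=1$), which is exactly the statement.

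I do not expect a genuine obstacle here; the proposition is essentially a clean translation of \eqref{eq: Jan Fil Odd} through Theorem \ref{thm: Shi thm}(2). The one point requiring care is the correct identification of which Borel (and hence which odd-root factor $\Xi_i$ versus $\Xi_{i-1}$) goes with which cohomology term: $K^+_{\beta_i}$ and $K^+_{-\beta_i}$ sit between the adjacent Borels $B^{(i-1)}$ and $B^{(i)}$, so $K^+_{-\beta_i}$ carries the positive system $\Phi^+_{\beta_{i-1}}$ and $K^+_{\beta_i}$ carries $\Phi^+_{\beta_i}$; getting the indices $i$ versus $i-1$ straight (and remembering that $\Phi^+_{\beta_0}$ means the standard $\Phi^+$, so $\Xi_0=\Xi$) is the only place an error could creep in. A secondary subtlety is justifying the infinite sum over $k$: only finitely many $\lambda_{i-1}+k\beta_i$ are dominant-relevant, but Lemma \ref{lem: typical}/Lemma \ref{lem: typical}-type reasoning (or the long exact sequence \eqref{eq: long exact seq} in the proof of Corollary \ref{cor: odd grod pre}) already guarantees the sum is effectively finite and the manipulation is legitimate.
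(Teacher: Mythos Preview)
Your proposal is correct and follows essentially the same route as the paper: derive \eqref{eq: Jan Fil Odd} from \eqref{eq: fil odd sum first} via Lemma~\ref{L4.1}(1) and Corollary~\ref{cor: odd grod pre}, then translate each cohomology character through Theorem~\ref{thm: Shi thm}(2) and rewrite the divisibility condition using \eqref{eq: typical eq}. One small notational slip: in your displayed isomorphism you wrote $H^0(G/K^+_\beta,\mu)$, but the induction is from the \emph{negative} Borel (so $H^0(G/K_\beta,\mu)$ or $H^0(G/K^-,\mu)$); also, in the paper's conventions $K_{\beta_i}$ is opposite to $K^+_{-\beta_i}$ (not to $K^+_{\beta_i}$), so it is $K^+_{-\beta_i}$ that carries the positive system $\Phi^+_{\beta_i}$ --- your $\Xi_i$/$\Xi_{i-1}$ bookkeeping comes out right regardless, but the labeling of the intermediate positive Borel should be adjusted.
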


\subsection{Total arguments} We are in a position to introduce our main result.
Set $\psi:=  \widetilde T_{A,\hat w_{\bar 0}}$   in (\ref{w01}). Then we already have
 $$\psi: V_A(\lambda)\rightarrow   H^0_{\texttt{total}, A}(\lambda)$$
 with $\psi=\varphi'\circ\varphi$.
By Lemma  \ref{lem: fil decomp}, along with Propositions \ref{prop: Jan Fil Even} and \ref{prop: Jan Fil Odd} we have

\begin{theorem}\label{main thm}Keep the notations as above. Let $\lambda\in X^+(T)$ be a typical weight.
There is a filtration of $G$-modules
$$V(\lambda)=V_\psi(\lambda)^0\supset V_\psi(\lambda)^1\supset\cdots $$ such that

\begin{itemize}
\item[(1)] $V(\lambda)/V_\psi(\lambda)^1\cong L(-w_{\bar 0}\lambda+2\rho_{\bar 1})^*$.
\item[(2)] The following sum formula holds
\begin{align*}
\sum\limits_{i>0}\texttt{ch} V_\psi(\lambda)^i
=&\sum\limits_{\alpha\in \Phi^+_{\bar 0} }\sum\limits_{0<mp<(\lambda+\rho_{\bar0},\alpha^{\vee})}
\nu_p(mp)\chi (r_{\alpha,mp}.\lambda)+\cr
+&\sum\limits_{\beta_i\in \Phi^+_{\bar 1};\; p\mid(\lambda+\rho,\beta_i)}
(\Xi_i\scrW(\lambda_{i-1})+\Xi_{i-1}\sum_{k>0}(-1)^k\scrW(\lambda_{i-1}+k\beta_i)).
\end{align*}
\end{itemize}
\end{theorem}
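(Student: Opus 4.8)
The plan is to obtain Theorem \ref{main thm} by running the general Jantzen-filtration construction of \S\ref{sec: Gen Jan Fil} on the composite $\psi=\widetilde T_{\bbz,\widehat w_\ell}=\varphi'\circ\varphi$ of (\ref{w01}), with $A=\bbz$ and $\frak{p}=p\bbz$, where $\varphi=\widetilde T_{\bbz,w_{\bar0}}\colon V_\bbz(\lambda)\to H^0_\bbz(\lambda)$ and $\varphi'=\widetilde T_{\bbz,w_{\bar1}}\colon H^0_\bbz(\lambda)\to H^0_{\texttt{total},\bbz}(\lambda)$. First I would verify the hypotheses of \S\ref{8.4} for $\psi$ (and, with a view to Lemma \ref{lem: fil decomp}, also for $\varphi$ and $\varphi'$): that $V_\bbz(\lambda)$, $H^0_\bbz(\lambda)$, $H^0_{\texttt{total},\bbz}(\lambda)$ are finitely generated and $\bbz$-torsion free, and that $\psi\otimes_\bbz\bbq$ is an isomorphism. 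Torsion-freeness I would read off from (the integral version of) Theorem \ref{thm: Shi thm}(2), which presents each of these modules as the tensor product of a $\bbz$-free induced or Weyl module for the reductive group $G_\ev$ with a free $\bbz$-module of the form $\bwedge(\ggg/\bbb^-)^{*}_\bo$ or $\bwedge(\ggg/\bbb^-)_\bo$ attached to the relevant Borel; here the typicality of $\lambda$ is used via Lemma \ref{lem: typical}, which guarantees $\lambda_{mn}=\lambda-2\rho_\bo\in X^+(T)$, so that the even factor of $H^0_{\texttt{total},\bbz}(\lambda)$ is genuinely a nonzero dominant-weight induced module for $(G_\ev)_\bbz$. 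That $\psi\otimes\bbq$ is an isomorphism is immediate by composing $\varphi\otimes\bbq$ (an isomorphism by Lemma \ref{lem: K-iso}) with $\varphi'\otimes\bbq$ (an isomorphism by Lemma \ref{lem: K-iso Odd}, again using typicality).

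Granted these checks, the construction of \S\ref{8.4} produces a descending chain $\overline M=\overline{M_\psi^0}\supseteq\overline{M_\psi^1}\supseteq\cdots$ of $G_{\bbf_p}$-submodules of $\overline M=V_\bbz(\lambda)/p\,V_\bbz(\lambda)\cong V_{\bbf_p}(\lambda)$ (using the base-change identity $V_{A'}(\lambda)\cong V_\bbz(\lambda)\otimes_\bbz A'$ recalled in \S\ref{sec: Gen Jan Fil}). I would then base change along $\bbf_p\hookrightarrow\tbk$, setting $V_\psi(\lambda)^i:=\overline{M_\psi^i}\otimes_{\bbf_p}\tbk$; by flatness of $\tbk$ over $\bbf_p$ and the identification $V(\lambda)\cong V_{\bbf_p}(\lambda)\otimes_{\bbf_p}\tbk$ behind (\ref{eq: char base change-1}), this is a descending $G$-filtration of $V(\lambda)$ with $\texttt{ch}\,V_\psi(\lambda)^i=\texttt{ch}\,\overline{M_\psi^i}$ for all $i$.

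For part (1) I would use (\ref{eq: image iso}): $\overline M/\overline{M_\psi^1}\cong\texttt{im}(\overline\psi)$, hence after base change $V(\lambda)/V_\psi(\lambda)^1\cong\texttt{im}(\overline\psi)\otimes_{\bbf_p}\tbk\cong\texttt{im}(\widetilde T_{\tbk,\widehat w_\ell})$, which by Proposition \ref{c9.2}(2) is the simple socle of $\total$, and by Proposition \ref{c9.2}(1) (equivalently Corollary \ref{coro: total socle}(2)) is isomorphic to $L(-w_{\bar0}\lambda+2\rho_{\bar1})^{*}$; its nonvanishing forces $V_\psi(\lambda)^1$ to be proper, and since the quotient is irreducible it is in fact $\texttt{rad}\,V(\lambda)$, consistently with Lemma \ref{lem9.1}(3). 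For part (2) I would apply Lemma \ref{lem: fil decomp} to $\psi=\varphi'\circ\varphi$, giving $\sum_{i>0}\texttt{ch}\,\overline{M_\psi^i}=\sum_{i>0}\texttt{ch}\,\overline{M_\varphi^i}+\sum_{i>0}\texttt{ch}\,\overline{{M'}_{\varphi'}^i}$, then substitute Proposition \ref{prop: Jan Fil Even} for the first summand and Proposition \ref{prop: Jan Fil Odd} for the second; the even-root term gets rewritten with the coroot $\alpha^{\vee}=2\alpha/(\alpha,\alpha)$ in place of $\alpha$, which for $\GL(m|n)$ is a harmless renormalisation that does not alter the index set of the inner sum.

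I expect essentially all of the real difficulty to sit upstream, packaged into Propositions \ref{c9.2}, \ref{prop: Jan Fil Even} and \ref{prop: Jan Fil Odd} together with Lemmas \ref{lem: K-iso} and \ref{lem: K-iso Odd}; the proof of the theorem proper is an assembly. Within that assembly the points I would treat most carefully are the passage from $\bbf_p$ to $\tbk$ — checking that the filtration base-changes correctly and that $V_\psi(\lambda)^1$ is exactly the reduction of $\ker\overline\psi$, so the top quotient is precisely the simple head $L(\gamma)^{*}$ — and the verification that the super induced modules over $\bbz$ meet the integrality and torsion-freeness demands of \S\ref{sec: Gen Jan Fil}, which is where Theorem \ref{thm: Shi thm}(2) and Lemma \ref{lem: typical} do the load-bearing work.
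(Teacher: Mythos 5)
Your assembly is exactly the paper's: part (1) is obtained from (\ref{eq: image iso}) together with Lemma \ref{lem8.2} and Proposition \ref{c9.2}, and part (2) from Lemma \ref{lem: fil decomp} applied to $\psi=\varphi'\circ\varphi$ with Propositions \ref{prop: Jan Fil Even} and \ref{prop: Jan Fil Odd} supplying the two summands. The extra verifications you flag -- $\bbz$-torsion-freeness via Theorem \ref{thm: Shi thm}(2) and Lemma \ref{lem: typical}, that $\psi\otimes\bbq$ is an isomorphism via Lemmas \ref{lem: K-iso} and \ref{lem: K-iso Odd}, and the base change from $\bbf_p$ to $\tbk$ -- are precisely the loose ends the paper leaves implicit, and you tie them off correctly.
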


\begin{proof} The first statement follows from Lemma \ref{lem8.2}, Corollary \ref{c9.2} and (\ref{eq: image iso}) along with Proposition \ref{c9.2}(2).
 The second one follows from the above two propositions.
\end{proof}

\begin{remark}\label{rem: p-typical} When $\lambda$ is $p$-typical, i.e.
$(\lambda+\rho,\beta_i)\not\equiv 0 \mod p$ for $1\leq i \leq mn$, we have $$\sum\limits_{i>0}\texttt{ch} V_\psi(\lambda)^i=\sum\limits_{\alpha\in \Phi^+_{\bar 0} }\sum\limits_{0<mp<(\lambda+\rho_{\bar0},\alpha)}\nu_p(mp)\chi (r_{\alpha,mp}.\lambda),$$ and  $V(\lambda)/V_\psi(\lambda)^1\cong L(\lambda)$ (cf. Remark \ref{r8.3}). In this case, the statement  is the same as in the case of reductive algebraic groups (see \cite[\S II.8.19]{Jan3}).
\end{remark}

\section{Kac modules and realizations of $p$-typical irreducible modules}
\subsection{Kac modules}\label{sec: kac mod}    Let $L_\ev(\lambda)$ denote the socle of $H^0_\ev(\lambda)$.     {Considering the closed subgroup scheme $B^+G_\ev$ of $G$, associated which $B^+G_\ev(R)$ for $R\in\sak$ is a parabolic subgroup of $G(R)$ consisting of matrices whose lower odd block submatrix is zero.
There is a natural epimorphism of supergroups $B^+G_\ev\to G_\ev$.}
  We define a $\texttt{Dist}(G)$-module $\texttt{Dist}(G)\otimes_{\texttt{Dist}(B^+G_\ev)} L_\ev(\lambda)$ with trivial $\texttt{Dist}(U^+)_\bo$-action on $L_\ev(\lambda)$, which is called a Kac module. We  denote it by  $\mathscr{K}(\lambda)$. Here $\texttt{Dist}(\diamondsuit)$ stands for the distribution algebra of a (super)group scheme $\diamondsuit$ (see \cite{Jan3}, or \cite{BKu} for  details).
Recall that the category of finite-dimensional rational $G$-modules  is equivalent to the category of finite-dimensional integrable $\texttt{Dist}(G)$-modules  (see \cite[Corollary 3.5]{BKu}). Here by an integrable $\texttt{Dist}(G)$-module $M$ it means there is a $T$-module structure on $M$ compatible with $\texttt{Dist}(G)$-module structure (see
$\texttt{Dist}(G)$-$T$-modules for algebraic groups in \cite[Page 171]{Jan3}).
So both will be identified in the following.
\begin{lemma}\label{lem: kac mod} There is a nontrivial homomorphism from $\mathscr{K}(\lambda)$ onto $L(-w_\bz\lambda)^*$ for any $\lambda\in X^+(T)$. This homomorphism becomes an isomorphism if and only if $ L(-w_\bz\lambda)^*$ has weight space of $w_\bz\lambda-2\rho_\bo$.
\end{lemma}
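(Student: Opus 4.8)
\textbf{Proof plan for Lemma \ref{lem: kac mod}.}
The plan is to first establish the existence of the nontrivial homomorphism $\mathscr{K}(\lambda)\to L(-w_\bz\lambda)^*$ using Frobenius reciprocity, and then to analyze when this homomorphism is an isomorphism by comparing weight spaces. First I would observe that since $L_\ev(\lambda)\hookrightarrow H^0_\ev(\lambda)$ has $B_\ev^+$-highest weight $\lambda$, the Kac module $\mathscr{K}(\lambda)=\texttt{Dist}(G)\otimes_{\texttt{Dist}(B^+G_\ev)}L_\ev(\lambda)$ has a $B^+$-highest weight $\lambda$ as well, since the lower odd root vectors act by raising weights along $\Phi_\bo^+$ (these carry $\texttt{Dist}(G)$ down from $\texttt{Dist}(B^+G_\ev)$). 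Thus $\mathscr{K}(\lambda)$ is generated by its $\lambda$-weight vector, and all its weights are $\leq\lambda$; in fact $\texttt{ch}(\mathscr{K}(\lambda))=\texttt{ch}(L_\ev(\lambda))\cdot\prod_{\beta\in\Phi_\bo^+}(1+e^{-\beta})$ by a PBW-type basis argument, so the lowest weight appearing is $\lambda-2\rho_\bo$ with multiplicity equal to $\dim L_\ev(\lambda)_{\text{lowest}}$, i.e. one. Since $\mathscr{K}(\lambda)$ has a unique maximal submodule (any proper submodule misses the $\lambda$-weight line, and the sum of all such is still proper), its head is a single irreducible $L(\mu)$ for some $\mu\in X^+(T)$; comparing $B^+$-highest weights forces $\mu$ to satisfy $L(\mu)_\lambda\neq 0$ with $\lambda$ maximal, which by Theorem \ref{lem8.3}(3) and the identification $L(-w_\bz\lambda)^*\cong L(\lambda)$ (type $\texttt{M}$, self-duality pattern) pins down the head. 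More precisely, dualizing: $\mathscr{K}(\lambda)^*$ is a lowest-weight module, its socle is $L(-w_\bz\lambda)$, so the head of $\mathscr{K}(\lambda)$ is $L(-w_\bz\lambda)^*$, giving the desired surjection.

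Next I would address the isomorphism criterion. The homomorphism $\mathscr{K}(\lambda)\twoheadrightarrow L(-w_\bz\lambda)^*$ is an isomorphism if and only if it is injective, equivalently if and only if $\dim\mathscr{K}(\lambda)=\dim L(-w_\bz\lambda)^*$, equivalently (since the surjection is strict on any proper submodule) if and only if $\mathscr{K}(\lambda)$ is already irreducible. The key point is to locate a distinguished ``bottom'' weight: the weight $w_\bz\lambda-2\rho_\bo$. In $\mathscr{K}(\lambda)$ this weight occurs (apply the longest even Weyl element to the highest-weight vector inside the $L_\ev(\lambda)$-part, then push all the way down by the full product of lower odd root vectors $\prod x_{-\beta}$, $\beta\in\Phi_\bo^+$); and it occurs with multiplicity one, being the lowest weight of the lowest even-weight space tensored with the top exterior power of the odd part. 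I would then argue that $w_\bz\lambda-2\rho_\bo$ is in fact the lowest weight of $\mathscr{K}(\lambda)$ relative to the opposite Borel, and that a one-dimensional weight space at the extreme weight generates a submodule; this submodule is irreducible with lowest weight $w_\bz\lambda-2\rho_\bo$, hence (again by the classification in Theorem \ref{lem8.3}) isomorphic to $L(-w_\bz\lambda)^*$ viewed via its lowest weight. Therefore $\mathscr{K}(\lambda)$ contains $L(-w_\bz\lambda)^*$ as a submodule, and since it also has $L(-w_\bz\lambda)^*$ as head, the surjection splits through it; $\mathscr{K}(\lambda)$ is irreducible precisely when this submodule fills everything, which happens exactly when no other composition factor appears, i.e. exactly when the weight $w_\bz\lambda-2\rho_\bo$ lies in $L(-w_\bz\lambda)^*$ (equivalently, when the unique copy of that weight in $\mathscr{K}(\lambda)$ survives in the head). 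This is the sought equivalence.

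I expect the main obstacle to be the careful bookkeeping of which weights generate submodules and of the fact that $w_\bz\lambda-2\rho_\bo$ is extremal with a one-dimensional weight space — one must verify that no ``shorter'' lower odd monomial on a non-highest even weight vector can land on $w_\bz\lambda-2\rho_\bo$, which relies on the fact that $2\rho_\bo=\sum_{\beta\in\Phi_\bo^+}\beta$ is the unique way to reach the extreme from $w_\bz\lambda$ using the multiplicity-free exterior algebra $\bwedge(\ggg/\bbb^-)_\bo$. The comparison $\texttt{ch}(\mathscr{K}(\lambda))=\texttt{ch}(L_\ev(\lambda))\prod_{\beta\in\Phi_\bo^+}(1+e^{-\beta})$ should follow from the $\texttt{Dist}(G)=\texttt{Dist}(U^-_\bo)\otimes\texttt{Dist}(B^+G_\ev)$ triangular-type decomposition (the odd analogue of the even PBW theorem, available in the $\texttt{Dist}$-module framework of \cite{BKu}), and I would cite that rather than reprove it. The translation between the ``head is $L(-w_\bz\lambda)^*$'' statement and ``$L(-w_\bz\lambda)^*$ has weight $w_\bz\lambda-2\rho_\bo$'' then amounts to: the natural surjection kills exactly the radical, and the radical is nonzero iff the composition factor carrying the extremal weight $w_\bz\lambda-2\rho_\bo$ is distinct from the head, iff the head $L(-w_\bz\lambda)^*$ does not contain that weight.
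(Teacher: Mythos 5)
Your proposal diverges from the paper's proof at exactly the point where the argument is most delicate, and the divergence produces a genuine gap.

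\textbf{The existence of the surjection.} You argue that $\mathscr{K}(\lambda)$ is a $B^+$-highest-weight module of highest weight $\lambda$, hence has a unique irreducible quotient (correct so far), and then you invoke ``the identification $L(-w_\bz\lambda)^*\cong L(\lambda)$ (type \texttt{M}, self-duality pattern)'' and, dually, ``the socle of $\mathscr{K}(\lambda)^*$ is $L(-w_\bz\lambda)$.'' Neither of these is valid in general. By the paper's (\ref{eq: tilde star iso}) one has $L(\lambda)^*\cong L(-w_\bz\tilde\lambda)$, with $\tilde\lambda$ built from the odd-reflection sequence $\lambda^{(i)}$ of (\ref{eq: sequence from lambda}); so the socle of $\mathscr{K}(\lambda)^*$ is $L(-w_\bz\tilde\lambda)$, not $L(-w_\bz\lambda)$, and these differ unless $\tilde\lambda=\lambda$. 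Likewise $L(-w_\bz\lambda)^*\cong L(\lambda)$ fails in general --- indeed Theorem \ref{thm: p-typical} is precisely the assertion that $L(\lambda)\cong L(-w_\bz\lambda+2\rho_\bo)^*$ iff $\lambda$ is $p$-typical, so a blanket ``self-duality pattern'' is exactly the thing one is \emph{not} allowed to assume. Your own observation that $\mathscr{K}(\lambda)$ has $B^+$-highest weight $\lambda$ forces the head to be $L(\lambda)$; reconciling this with the target $L(-w_\bz\lambda)^*$ is the whole subtlety, and it cannot be dispatched by a duality shortcut.

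\textbf{What the paper does instead.} The paper never reasons about the head of $\mathscr{K}(\lambda)$ via duality. It constructs the surjection directly: using the structure theory of Chevalley-type supergroups, one has a normal subgroup scheme $L_1\trianglelefteq B^{(mn)}G_\ev$ with $B^{(mn)}G_\ev/L_1\cong G_\ev$ and $B^{(mn)}/L_1\cong B_\ev$, whence a $B^{(mn)}G_\ev$-module isomorphism $H^0(B^{(mn)}G_\ev/B^{(mn)},\lambda)\cong H^0(G_\ev/B_\ev,\lambda)$ sitting inside $H^0(G/B^{(mn)},\lambda)$; its socle $L_\ev(\lambda)$ therefore carries trivial $\texttt{Dist}(U^+)_\bo$-action. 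The universal property of $\texttt{Dist}(G)\otimes_{\texttt{Dist}(B^+G_\ev)}(-)$ then yields a $\texttt{Dist}(G)$-map $\mathscr{K}(\lambda)\to H^0(G/B^{(mn)},\lambda)$ landing in (and by irreducibility, onto) the socle $L_{B^{(mn)}}(\lambda)$; the identification $L_{B^{(mn)}}(\lambda)\cong L(-w_\bz\lambda)^*$ is supplied by Corollary \ref{coro: total socle}. None of this passes through $L(\lambda)$. This is the key idea you are missing: the odd-reflected Borel $B^{(mn)}=w_\bo(B)$ and the totally-odd induced module are what make $L_\ev(\lambda)$ a genuine $B^+G_\ev$-submodule with trivial positive-odd action, which is the datum the universal property needs.

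\textbf{The isomorphism criterion.} Here you and the paper agree on the crucial point --- the $B^+$-lowest weight $w_\bz\lambda-2\rho_\bo$ of $\mathscr{K}(\lambda)=\texttt{Dist}(U^-)_\bo\otimes L_\ev(\lambda)\cong\bwedge^\bullet(\sum_{\beta\in\Phi^+_\bo}\ggg_{-\beta})\otimes L_\ev(\lambda)$ is one-dimensional and is the pivot. But your additional claim that the extremal weight vector generates a \emph{submodule} isomorphic to $L(-w_\bz\lambda)^*$, so that $\mathscr{K}(\lambda)$ contains $L(-w_\bz\lambda)^*$ both as submodule and as head, is not justified and not needed. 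The paper's argument is shorter and cleaner: the unique maximal submodule $N$ of $\mathscr{K}(\lambda)$, if nonzero, must contain the one-dimensional weight space at $w_\bz\lambda-2\rho_\bo$, so the irreducible quotient loses that weight; conversely if $N=0$ the quotient is $\mathscr{K}(\lambda)$ itself and certainly has it. No second copy of $L(-w_\bz\lambda)^*$ inside $\mathscr{K}(\lambda)$ is required. You should drop the ``submodule = head'' detour and the appeal to self-duality, and replace the first half of your argument by the Frobenius-reciprocity construction via $B^{(mn)}$.
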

\begin{proof}  It is readily seen that $G$ has a closed subgroup scheme $\bmn G_\ev$. Now we  exploit the structure theory of algebraic supergroups of Chevalley type (see \cite[\S5]{FG12}).
     {As mentioned in the proof of Theorem \ref{thm: Shi thm}(1) where we have already a normal subgroup scheme $G_1$ of $G$}, it can be shown that both $\bmn G_\ev$ and $\bmn$ have a normal subgroup scheme $L_1$ such that  $$\bmn G_\ev\slash L_1\cong G_\ev \;\text{ and }\bmn\slash L_1\cong B_\ev.$$
 Such a normal subgroup scheme $L_1$ can be described as $L_1(R)=\bmn(R)\cap G_1(R)$ for any $R\in \sak$, where $G_1(R)$  is the same as in the proof of Theorem \ref{thm: Shi thm}(1).
 By the same arguments as in the proof of  \cite[Lemma 10.4]{Z1},
  a super analogue of the result \cite[Proposition 6.11]{Jan3} yields the following isomorphism of $\bmn G_\ev$-modules
  $$H^0(\bmn G_{\ev}\slash \bmn,\lambda)\cong H^0(G_{\ev}\slash B_\ev,\lambda).$$

 Recall that $H^0(G\slash \bmn,\lambda)$ is finite-dimensional (see Theorem \ref{lem8.3}), and it  has a $\bmn G_\ev$-submodule $H^0(\bmn G_{\ev}\slash \bmn,\lambda)\cong H^0(G_{\ev}\slash B_\ev,\lambda)$ which can be regarded a $\texttt{Dist}(B^{(mn)}G_\ev)$-module with $\texttt{Dist}(\bmn)_\bo$-trivial action. Note that $\texttt{Dist}(U^{(mn)})_\bo=\texttt{Dist}(U^+)_\bo$. By a classical result of reductive algebraic groups, $H^0(G_{\ev}\slash B_\ev,\lambda)$ has simple socle $L_\ev(\lambda)$. So the socle $L_\ev(\lambda)$ of $H^0(\bmn G_{\ev}\slash \bmn,\lambda)$ is actually an irreducible $\texttt{Dist}(\bmn G_\ev)$-module with trivial $\texttt{Dist}(U^+)_\bo$-action.
 %
  This one is actually an irreducible $\texttt{Dist}(B^+G_\ev)$-irreducible module with trivial $\texttt{Dist}(U^+)_\bo$-action.
On the other side,  this $L_\ev(\lambda)$ has one-dimensional $B_\ev^+$-highest weight space of weight $\lambda$. By the same arguments as in the proof of Lemma \ref{lem: U fixed}, it is known that there is one-dimensional subspace
$$H^0(G\slash \bmn, \lambda)_\lambda=L_\bmn(\lambda)_\lambda=H^0(G\slash\bmn,\lambda)^{\bmn^+}.$$ Hence the $G$-submodule generated by this  $L_\ev(\lambda)$ in $H^0(G\slash \bmn,\lambda)$
is exactly $L_\bmn(\lambda)$. Note that $\texttt{Dist}(G)=\texttt{Dist}(U^-)_\bo\texttt{Dist}(B^+G_\ev)$.
This means, $L_\bmn(\lambda)=\texttt{Dist}(U^-)_\bo L_\ev(\lambda)$.
 By the universality of tensor products, there is a nontrivial homomorphism from $\mathscr{K}(\lambda)$ onto $L_\bmn(\lambda)$ sending $1\otimes L_{\ev}(\lambda)$ to the socle of $H^0(\bmn G_{\ev}\slash \bmn,\lambda)$.  Keep it in mind that $L_\bmn(\lambda)\cong L(-w_\bz\lambda)^*$ (see Lemma \ref{lem9.1}).
 The first part is proved.

 For the second part, consider   $\ggg=\texttt{Lie}(G)$ which has a direct-sum decomposition of root spaces $\hhh+\sum_{\alpha\in \Phi}\ggg_\alpha$ with $\hhh=\texttt{Lie}(T)$. By the routine arguments for highest weight categories, $\mathscr{K}(\lambda)$ has a unique maximal submodule $N$ which is the direct sum of all proper submodules. Note that as a vector space, $\mathscr{K}(\lambda)=\texttt{Dist}(U^-)_\bo\otimes L_\ev(\lambda)$. The latter is exactly equal to  $\bigwedge^\bullet(\sum_{\beta\in \Phi^+_\bo}\ggg_{-\beta})\otimes L_\ev(\lambda)$ (see Lemma \cite[Lemma 3.1]{BKu}).  If $N$ is nonzero, by some trivial but a little tedious arguments $N$ must contain one-dimensional $B^+$-lowest weight space $\mathscr{K}(\lambda)_{w_\bz\lambda-2\rho_\bo}$. In this case, the irreducible quotient of $\mathscr{K}(\lambda)$ does not contain nonzero weight space of weight  $\lambda_{mn}$.
 Hence $\mathscr{K}(\lambda)$ is irreducible if and only if $L(\lambda)$ does not contain nonzero weigh space of weight $w_\bz(\lambda)-2\rho_\bo$. The second statement  follows.
\end{proof}

\begin{remark}
Note that  $\bmn=w_\bo(B)$ for $G=\GL(m|n)$.
It is not hard to see for $G=\GL(m|n)$ or $G=\text{OSp}(m|2n)$, $w_\bo(B) G_\ev=B^+ G_\ev$, and   $\mathscr{K}\cong\texttt{ind}^G_{B^+G_\ev}L_\ev(\lambda)$ by using the structural properties of $\texttt{Dist}(G)$ (see \cite{BKu}, \cite{SW}) and of algebraic supergroups of  Chevalley type (see \cite[\S5.3]{FG12}).
So one can expect an alternative definition  of a Kac module $\mathscr{K}(\lambda)$ for basic classical supergroups via induced modules. 
\end{remark}

\subsection{An application}

    {As an application, we give an alternative proof of  known results (see \cite[Propostion 12.10]{Z1}, and \cite[Theorem 1]{Ma}).}
\begin{theorem}\label{thm: p-typical} The following statements are equivalent for $G=\GL(m|n)$ over $\tbk$.
\begin{itemize}
\item[(1)]  $\lambda\in X^+(T)$ is  $p$-typical.
 \item[(2)] $L(\lambda)\cong L(-w_\bz\lambda+2\rho_\bo)^*$.
 \item[(3)] The irreducible module $L(\lambda)$ is isomorphic to  $\mathscr{K}(\lambda-2\rho_\bo)$.
\end{itemize}
\end{theorem}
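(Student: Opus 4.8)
The strategy is to establish the cycle of implications $(1)\Rightarrow(2)\Rightarrow(3)\Rightarrow(1)$, exploiting the machinery developed in the preceding sections, chiefly Lemma \ref{L4.1}, Lemma \ref{lem: sequence of irr odd ref}, Lemma \ref{lem9.1}, and Lemma \ref{lem: kac mod}. First I would record the translation of the $p$-typicality hypothesis: by \eqref{eq: typical eq} the condition $(\lambda+\rho,\beta_i)\not\equiv 0\mod p$ for all $i$ is equivalent to $(\lambda_{i-1},\beta_i)\not\equiv 0\mod p$ for all $i$, which is exactly the hypothesis under which every map $\widetilde T_{\tbk_{\lambda_i},\beta_i}$ of \S\ref{S6.5'} is an isomorphism (Lemma \ref{L4.1}(1)). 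Composing these along the chain $B^{(0)},\dots,B^{(mn)}$ shows $H^0(\lambda)\cong \total$ and $L(\lambda)=L_{B^{(0)}}(\lambda)\cong L_{B^{(mn)}}(\lambda_{mn})$; moreover $p$-typicality forces $\tilde\lambda=\lambda$ in the notation of Remark \ref{rem: typical}(1). Combining with Corollary \ref{coro: total socle}(2), which gives $L(\gamma)^*\cong L_{B^{(mn)}}(\lambda_{mn})$ for $\gamma=-w_{\bar 0}\lambda+2\rho_{\bar 1}$, we get $L(\lambda)\cong L(-w_{\bar 0}\lambda+2\rho_{\bar 1})^*$, i.e. $L(\lambda)\cong L(-w_\bz\lambda+2\rho_\bo)^*$ after matching notation ($\rho_{\bar 1}=\rho_\bo$); this is $(2)$.

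For $(2)\Rightarrow(3)$ I would invoke Lemma \ref{lem: kac mod}: there is always a nonzero homomorphism $\mathscr{K}(\lambda-2\rho_\bo)\twoheadrightarrow L(-w_\bz(\lambda-2\rho_\bo))^*=L(-w_\bz\lambda+2\rho_\bo)^*$, and this is an isomorphism precisely when $L(-w_\bz\lambda+2\rho_\bo)^*$ has a nonzero weight space of weight $w_\bz(\lambda-2\rho_\bo)-2\rho_\bo=w_\bz\lambda-2\rho_\bo\cdot(\text{recompute})$. So the task is to verify that $(2)$ supplies exactly that weight. Under $(2)$, $L(\lambda)\cong L(-w_\bz\lambda+2\rho_\bo)^*$, and $L(\lambda)$ as a highest-weight module for $B^+$ has $\lambda$ as its highest weight; dualizing, $L(-w_\bz\lambda+2\rho_\bo)^*$ has $-w_\bz\lambda$ as a weight (its lowest weight after the twist) — more to the point, I would trace through the weight bookkeeping so that the ``extreme'' weight $w_\bz\lambda-2\rho_\bo$ demanded by Lemma \ref{lem: kac mod} is identified with a weight actually occurring in $L(\lambda)$ via the isomorphism in $(2)$, using that $L(\lambda)\cong L_{B^{(mn)}}(\lambda_{mn})$ has $\lambda_{mn}=\lambda-2\rho_\bo$ as its $B^{(mn)}$-highest weight and hence this weight occurs. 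This yields the isomorphism $L(\lambda)\cong\mathscr{K}(\lambda-2\rho_\bo)$, which is $(3)$.

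For $(3)\Rightarrow(1)$, I would argue by contrapositive. Suppose $\lambda$ is not $p$-typical, so there is a smallest $i$ with $(\lambda_{i-1},\beta_i)\equiv 0\mod p$. Then by Lemma \ref{L4.1}(2) the map $\widetilde T_{\tbk_{\lambda_i},\beta_i}$ fails to be an isomorphism, so the chain of isomorphisms from $H^0(\lambda)$ to $\total$ breaks and $\mathscr{K}(\lambda-2\rho_\bo)$ is strictly larger than its irreducible head. Concretely, one compares dimensions or characters: $\texttt{ch}\,\mathscr{K}(\lambda-2\rho_\bo)=\scrW(\lambda)\Xi_{mn}$ (as $\mathscr{K}(\lambda-2\rho_\bo)\cong\bwedge(\sum_{\beta\in\Phi_\bo^+}\ggg_{-\beta})\otimes L_\ev(\lambda)$ from the proof of Lemma \ref{lem: kac mod}), while atypicality forces $L(\lambda)$ to be a proper quotient — this can be detected by Lemma \ref{lem: kac mod}'s criterion: atypicality means $L(\lambda)$ does \emph{not} carry the weight $w_\bz\lambda-2\rho_\bo$ (the lowest weight of $\mathscr{K}$ is killed in the irreducible quotient), so $\mathscr{K}(\lambda-2\rho_\bo)$ is reducible, contradicting $(3)$. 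Assembling, the three implications close the cycle.

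\textbf{Main obstacle.} The step I expect to be most delicate is the weight-space verification inside $(2)\Leftrightarrow(3)$ — pinning down that the precise weight $w_\bz\lambda-2\rho_\bo$ required by Lemma \ref{lem: kac mod} is the one controlled by $p$-typicality, rather than some nearby weight, and doing this consistently with the two different Borel conventions ($B^+$ versus $B=B^-$ versus $B^{(mn)}=w_\bo(B)$) and the $-w_\bz$ twists that pervade the duality statements. One must be careful that $2\rho_{\bar 1}$ in Lemma \ref{lem9.1} equals $2\rho_\bo$, that $L(\mu)^*\cong L(-w_\bz\tilde\mu)$ is applied with the correct $\tilde\mu$, and that in the $p$-typical case $\tilde\mu=\mu$. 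Once the weight bookkeeping is aligned, each implication is a short assembly of already-proven lemmas; the genuinely new content is simply organizing Lemma \ref{L4.1}, Corollary \ref{coro: total socle}, and Lemma \ref{lem: kac mod} into the equivalence.
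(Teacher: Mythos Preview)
Your approach is essentially the paper's: both pivot on the characterization $(1)\Leftrightarrow L(\lambda)\cong L_{B^{(mn)}}(\lambda_{mn})$ drawn from Lemma~\ref{L4.1} and Remark~\ref{rem: typical}, combine this with Corollary~\ref{coro: total socle}(2) to obtain $(1)\Leftrightarrow(2)$, and then invoke Lemma~\ref{lem: kac mod} for the link to $(3)$. The paper packages the argument as the two bi-implications $(1)\Leftrightarrow(2)$ and $(2)\Leftrightarrow(3)$ rather than your cycle $(1)\Rightarrow(2)\Rightarrow(3)\Rightarrow(1)$, but the logical content and the lemmas used are identical. Your flagging of the weight-space bookkeeping as the delicate point is accurate; the paper is equally terse there, simply asserting that the equivalence of $(2)$ and $(3)$ ``is deduced from Lemma~\ref{lem: kac mod}'' after recasting $(1)$ as the statement that $L(\lambda)$ has one-dimensional $B^+$-lowest weight space of weight $\lambda_{mn}$.

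One place to tighten your write-up: in the $(3)\Rightarrow(1)$ contrapositive, your character formula $\texttt{ch}\,\mathscr{K}(\lambda-2\rho_\bo)=\scrW(\lambda)\Xi_{mn}$ is off on both factors---it should involve $\texttt{ch}\,L_\ev(\cdot)$ rather than $\scrW(\cdot)=\texttt{ch}\,H^0_\ev(\cdot)$, and the exterior-algebra factor is $\Xi=\prod_{\beta\in\Phi^+_\bo}(1+e^{-\beta})$, not $\Xi_{mn}$. More to the point, this detour is unnecessary: $(3)\Rightarrow(2)$ is immediate from Lemma~\ref{lem: kac mod}, since an irreducible Kac module must equal its simple head $L(-w_\bz(\lambda-2\rho_\bo))^*=L(-w_\bz\lambda+2\rho_\bo)^*$ (using $w_\bz\rho_\bo=\rho_\bo$), and then the already-established $(2)\Rightarrow(1)$ closes the loop without ever needing the precise lowest-weight criterion or any character comparison. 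This is the route the paper takes, and it bypasses exactly the weight computation you were worried about.
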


\begin{proof} Note that $(1)$ is equivalent to say $L(\lambda)=L_{B^{(mn)}}(\lambda_{mn})$. The latter is equivalent to say $L(\lambda)$ has one-dimensional $B^+$-lowest weight space of weight $\lambda_{mn}$.  So the equivalence of the first two follows from  Lemma \ref{L4.1} and Remark \ref{r8.3}.

By the above arguments, the equivalence of the second and third ones is deduced from Lemma \ref{lem: kac mod}.
\end{proof}

\begin{remark}\label{rem: p typical realization} This theorem can be regarded a modular version of the result on Kac module realization of typical irreducible modules over complex numbers (see Remark \ref{rem: complex typical}).
\end{remark}

\subsection*{Acknowledgement} The authors express deep thanks to the anonymous referee for helpful comments making them improve an old version of the manuscript.

\end{document}